\date{\today}
\newtheorem{theorem}{Theorem}[section]
\newtheorem{proposition}[theorem]{Proposition}
\newtheorem{corollary}[theorem]{Corollary}
\newtheorem{lemma}[theorem]{Lemma}
\theoremstyle{definition}
\newtheorem{example}[theorem]{Example}
\newtheorem{remark}[theorem]{Remark}
\begin{document}

\title[On the group of automorphisms of the semigroup $\boldsymbol{B}_{\mathbb{Z}}^{\mathscr{F}}$ ]{On the group of automorphisms of the semigroup $\boldsymbol{B}_{\mathbb{Z}}^{\mathscr{F}}$ with the family $\mathscr{F}$ of inductive nonempty subsets of $\omega$}
\author{Oleg Gutik and Inna Pozdniakova}
\address{Ivan Franko National University of Lviv, Universytetska 1, Lviv, 79000, Ukraine}
\email{oleg.gutik@lnu.edu.ua, pozdnyakova.inna@gmail.com}

\keywords{Bicyclic monoid, inverse semigroup, bicyclic extension, automorphism, group of automorphism, order-convex set, order isomorphism.}

\subjclass[2020]{Primary 20M18; Secondary 20F29, 20M10}

\begin{abstract}
We study automorphisms of the semigroup $\boldsymbol{B}_{Z\mathbb{}}^{\mathscr{F}}$ with the family $\mathscr{F}$ of inductive nonempty subsets of $\omega$ and prove that the group $\mathbf{Aut}(\boldsymbol{B}_{\mathbb{Z}}^{\mathscr{F}})$ of automorphisms of the semigroup $\boldsymbol{B}_{Z\mathbb{}}^{\mathscr{F}}$ is isomorphic to the additive group of integers.
\end{abstract}

\maketitle


\section{Introduction, motivation and main definitions}

We shall follow the terminology of~\cite{Clifford-Preston-1961, Clifford-Preston-1967, Harzheim=2005, Lawson=1998, Magnus-Karrass-Solitar-1976}. By $\omega$ we denote the set of all non-negative integers and by $\mathbb{Z}$ the set of all integers.

Let $\mathscr{P}(\omega)$ be  the family of all subsets of $\omega$. For any $F\in\mathscr{P}(\omega)$ and $n,m\in\omega$ we put $n-m+F=\{n-m+k\colon k\in F\}$ if $F\neq\varnothing$ and $n-m+\varnothing=\varnothing$. A subfamily $\mathscr{F}\subseteq\mathscr{P}(\omega)$ is called \emph{${\omega}$-closed} if $F_1\cap(-n+F_2)\in\mathscr{F}$ for all $n\in\omega$ and $F_1,F_2\in\mathscr{F}$. For any $a\in\omega$ we denote $[a)=\{x\in\omega\colon x\geqslant a\}$.

A subset $A$ of $\omega$ is said to be \emph{inductive}, if $i\in A$ implies $i+1\in A$. Obvious, that $\varnothing$ is an inductive subset of $\omega$.

\begin{remark}[\!\cite{Gutik-Mykhalenych=2021}]\label{remark-1.1}
\begin{enumerate}
  \item\label{remark-1.1(1)} By Lemma~6 from \cite{Gutik-Mykhalenych=2020} nonempty subset $F\subseteq \omega$ is inductive in $\omega$ if and only $(-1+F)\cap F=F$.
  \item\label{remark-1.1(2)} Since the set $\omega$ with the usual order is well-ordered, for any nonempty inductive subset $F$ in $\omega$ there exists nonnegative integer $n_F\in\omega$ such that $[n_F)=F$.
  \item\label{remark-1.1(3)} Statement \eqref{remark-1.1(2)} implies that the intersection of an arbitrary finite family of nonempty inductive subsets in $\omega$ is a nonempty inductive subset of  $\omega$.
\end{enumerate}
\end{remark}

A semigroup $S$ is called {\it inverse} if for any
element $x\in S$ there exists a unique $x^{-1}\in S$ such that
$xx^{-1}x=x$ and $x^{-1}xx^{-1}=x^{-1}$. The element $x^{-1}$ is
called the {\it inverse of} $x\in S$. If $S$ is an inverse
semigroup, then the function $\operatorname{inv}\colon S\to S$
which assigns to every element $x$ of $S$ its inverse element
$x^{-1}$ is called the {\it inversion}.

A \emph{partially ordered set} (or shortly a \emph{poset}) $(X,\leqq)$ is the set $X$ with the reflexive, antisymmetric and transitive relation $\leqq$. In this case relation $\leqq$ is called a partial order on $X$. A partially ordered set $(X,\leqq)$ is \emph{linearly ordered} or is a \emph{chain} if $x\leqq y$ or $y\leqq x$ for any $x,y\in X$. A map $f$ from a poset $(X,\leqq)$ onto a poset $(Y,\eqslantless)$ is said to be an order isomorphism if $f$ is bijective and $x\leqq y$ if and only if $f(x)\eqslantless f(y)$.

If $S$ is a semigroup, then we shall denote the subset of all
idempotents in $S$ by $E(S)$. If $S$ is an inverse semigroup, then
$E(S)$ is closed under multiplication and we shall refer to $E(S)$ as a
\emph{band} (or the \emph{band of} $S$). Then the semigroup
operation on $S$ determines the following partial order $\preccurlyeq$
on $E(S)$: $e\preccurlyeq f$ if and only if $ef=fe=e$. This order is
called the {\em natural partial order} on $E(S)$. A \emph{semilattice} is a commutative semigroup of idempotents.

If $S$ is an inverse semigroup then the semigroup operation on $S$ determines the following partial order $\preccurlyeq$
on $S$: $s\preccurlyeq t$ if and only if there exists $e\in E(S)$ such that $s=te$. This order is
called the {\em natural partial order} on $S$ \cite{Wagner-1952}.

The bicyclic monoid ${\mathscr{C}}(p,q)$ is the semigroup with the identity $1$ generated by two elements $p$ and $q$ subjected only to the condition $pq=1$. The semigroup operation on ${\mathscr{C}}(p,q)$ is determined as
follows:
\begin{equation*}
    q^kp^l\cdot q^mp^n=q^{k+m-\min\{l,m\}}p^{l+n-\min\{l,m\}}.
\end{equation*}
It is well known that the bicyclic monoid ${\mathscr{C}}(p,q)$ is a bisimple (and hence simple) combinatorial $E$-unitary inverse semigroup and every non-trivial congruence on ${\mathscr{C}}(p,q)$ is a group congruence \cite{Clifford-Preston-1961}.

On the set $\boldsymbol{B}_{\omega}=\omega\times\omega$ we define the semigroup operation ``$\cdot$'' in the following way
\begin{equation}\label{eq-1.1}
  (i_1,j_1)\cdot(i_2,j_2)=
  \left\{
    \begin{array}{ll}
      (i_1-j_1+i_2,j_2), & \hbox{if~} j_1\leqslant i_2;\\
      (i_1,j_1-i_2+j_2), & \hbox{if~} j_1\geqslant i_2.
    \end{array}
  \right.
\end{equation}
It is well known that the bicyclic monoid $\mathscr{C}(p,q)$ to the semigroup $\boldsymbol{B}_{\omega}$ is isomorphic by the mapping $\mathfrak{h}\colon \mathscr{C}(p,q)\to \boldsymbol{B}_{\omega}$, $q^kp^l\mapsto (k,l)$ (see: \cite[Section~1.12]{Clifford-Preston-1961} or \cite[Exercise IV.1.11$(ii)$]{Petrich-1984}).


Next we shall describe the construction which is introduced in \cite{Gutik-Mykhalenych=2020}.

Let $\boldsymbol{B}_{\omega}$ be the bicyclic monoid and $\mathscr{F}$ be an ${\omega}$-closed subfamily of $\mathscr{P}(\omega)$. On the set $\boldsymbol{B}_{\omega}\times\mathscr{F}$ we define the semigroup operation ``$\cdot$'' in the following way
\begin{equation}\label{eq-1.2}
  (i_1,j_1,F_1)\cdot(i_2,j_2,F_2)=
  \left\{
    \begin{array}{ll}
      (i_1-j_1+i_2,j_2,(j_1-i_2+F_1)\cap F_2), & \hbox{if~} j_1\leqslant i_2;\\
      (i_1,j_1-i_2+j_2,F_1\cap (i_2-j_1+F_2)), & \hbox{if~} j_1\geqslant i_2.
    \end{array}
  \right.
\end{equation}
In \cite{Gutik-Mykhalenych=2020} is proved that if the family $\mathscr{F}\subseteq\mathscr{P}(\omega)$ is ${\omega}$-closed then $(\boldsymbol{B}_{\omega}\times\mathscr{F},\cdot)$ is a semigroup. Moreover, if an ${\omega}$-closed family  $\mathscr{F}\subseteq\mathscr{P}(\omega)$ contains the empty set $\varnothing$ then the set
$ 
  \boldsymbol{I}=\{(i,j,\varnothing)\colon i,j\in\omega\}
$ 
is an ideal of the semigroup $(\boldsymbol{B}_{\omega}\times\mathscr{F},\cdot)$. For any ${\omega}$-closed family $\mathscr{F}\subseteq\mathscr{P}(\omega)$ the following semigroup
\begin{equation*}
  \boldsymbol{B}_{\omega}^{\mathscr{F}}=
\left\{
  \begin{array}{ll}
    (\boldsymbol{B}_{\omega}\times\mathscr{F},\cdot)/\boldsymbol{I}, & \hbox{if~} \varnothing\in\mathscr{F};\\
    (\boldsymbol{B}_{\omega}\times\mathscr{F},\cdot), & \hbox{if~} \varnothing\notin\mathscr{F}
  \end{array}
\right.
\end{equation*}
is defined in \cite{Gutik-Mykhalenych=2020}. The semigroup $\boldsymbol{B}_{\omega}^{\mathscr{F}}$ generalizes the bicyclic monoid and the countable semigroup of matrix units. It is proven in \cite{Gutik-Mykhalenych=2020} that $\boldsymbol{B}_{\omega}^{\mathscr{F}}$ is a combinatorial inverse semigroup and Green's relations, the natural partial order on $\boldsymbol{B}_{\omega}^{\mathscr{F}}$ and its set of idempotents are described.
Here, the criteria when the semigroup $\boldsymbol{B}_{\omega}^{\mathscr{F}}$ is simple, $0$-simple, bisimple, $0$-bisimple, or it has the identity, are given.
In particularly in \cite{Gutik-Mykhalenych=2020} it is proved that the semigroup $\boldsymbol{B}_{\omega}^{\mathscr{F}}$ is isomorphic to the semigrpoup of ${\omega}{\times}{\omega}$-matrix units if and only if $\mathscr{F}$ consists of a singleton set and the empty set, and $\boldsymbol{B}_{\omega}^{\mathscr{F}}$ is isomorphic to the bicyclic monoid if and only if $\mathscr{F}$ consists of a non-empty inductive subset of $\omega$.

Group congruences on the semigroup  $\boldsymbol{B}_{\omega}^{\mathscr{F}}$ and its homomorphic retracts  in the case when an ${\omega}$-closed family $\mathscr{F}$ consists of inductive non-empty subsets of $\omega$ are studied in \cite{Gutik-Mykhalenych=2021}. It is proven that a congruence $\mathfrak{C}$ on $\boldsymbol{B}_{\omega}^{\mathscr{F}}$ is a group congruence if and only if its restriction on a subsemigroup of $\boldsymbol{B}_{\omega}^{\mathscr{F}}$, which is isomorphic to the bicyclic semigroup, is not the identity relation. Also in \cite{Gutik-Mykhalenych=2021}, all non-trivial homomorphic retracts and isomorphisms  of the semigroup $\boldsymbol{B}_{\omega}^{\mathscr{F}}$ are described.

In \cite{Gutik-Lysetska=2021, Lysetska=2020} the algebraic structure of the semigroup $\boldsymbol{B}_{\omega}^{\mathscr{F}}$ is established in the case when ${\omega}$-closed family $\mathscr{F}$ consists of atomic subsets of ${\omega}$.

The set $\boldsymbol{B}_{\mathbb{Z}}=\mathbb{Z}\times\mathbb{Z}$ with the semigroup operation defined by formula \eqref{eq-1.1} is called the \emph{extended bicyclic semigroup} \cite{Warne-1968}. On the set $\boldsymbol{B}_{\mathbb{Z}}\times\mathscr{F}$, where $\mathscr{F}$ is an ${\omega}$-closed subfamily of $\mathscr{P}(\omega)$, we define the semigroup operation ``$\cdot$'' by formula \eqref{eq-1.2}. In \cite{Gutik-Pozdniakova=2021} it is proved that $(\boldsymbol{B}_{\mathbb{Z}}\times\mathscr{F},\cdot)$ is a semigroup. Moreover, if an ${\omega}$-closed family  $\mathscr{F}\subseteq\mathscr{P}(\omega)$ contains the empty set $\varnothing$ then the set
$ 
  \boldsymbol{I}=\{(i,j,\varnothing)\colon i,j\in\mathbb{Z}\}
$ 
is an ideal of the semigroup $(\boldsymbol{B}_{\mathbb{Z}}\times\mathscr{F},\cdot)$. For any ${\omega}$-closed family $\mathscr{F}\subseteq\mathscr{P}(\omega)$ the following semigroup
\begin{equation*}
  \boldsymbol{B}_{\mathbb{Z}}^{\mathscr{F}}=
\left\{
  \begin{array}{ll}
    (\boldsymbol{B}_{\mathbb{Z}}\times\mathscr{F},\cdot)/\boldsymbol{I}, & \hbox{if~} \varnothing\in\mathscr{F};\\
    (\boldsymbol{B}_{\mathbb{Z}}\times\mathscr{F},\cdot), & \hbox{if~} \varnothing\notin\mathscr{F}
  \end{array}
\right.
\end{equation*}
is defined in \cite{Gutik-Pozdniakova=2021} similarly as in \cite{Gutik-Mykhalenych=2020}. In \cite{Gutik-Pozdniakova=2021} it is proven that $\boldsymbol{B}_{\mathbb{Z}}^{\mathscr{F}}$ is a combinatorial inverse semigroup. Green's relations, the natural partial order on the semigroup $\boldsymbol{B}_{\mathbb{Z}}^{\mathscr{F}}$ and its set of idempotents are described.
Here, the criteria when the semigroup $\boldsymbol{B}_{\mathbb{Z}}^{\mathscr{F}}$ is simple, $0$-simple, bisimple, $0$-bisimple, is isomorphic to the extended bicyclic semigroup,  are derived. In particularly in \cite{Gutik-Pozdniakova=2021} it is proved that the semigroup $\boldsymbol{B}_{\mathbb{Z}}^{\mathscr{F}}$ is isomorphic to the semigrpoup of ${\omega}{\times}{\omega}$-matrix units if and only if $\mathscr{F}$ consists of a singleton set and the empty set, and $\boldsymbol{B}_{Z}^{\mathscr{F}}$ is isomorphic to the extended bicyclic semigroup if and only if $\mathscr{F}$ consists of a non-empty inductive subset of $\omega$.
Also, in \cite{Gutik-Pozdniakova=2021} it is proved that in the case when the family $\mathscr{F}$ consists of all singletons of $\omega$ and the empty set, the semigroup $\boldsymbol{B}_{\mathbb{Z}}^{\mathscr{F}}$ is isomorphic to the Brandt $\lambda$-extension of the semilattice $(\omega,\min)$, where $(\omega,\min)$ is the set $\omega$ with the semilattice operation $x\cdot y=\min\{x,y\}$.

It is well-known that every automorphism of the bicyclic monoid $\boldsymbol{B}_{\omega}$  is the identity self-map of $\boldsymbol{B}_{\omega}$ \cite{Clifford-Preston-1961}, and hence the group $\mathbf{Aut}(\boldsymbol{B}_{\omega})$ of automorphisms of $\boldsymbol{B}_{\omega}$ is trivial. The group $\mathbf{Aut}(\boldsymbol{B}_{\mathbb{Z}})$ of automorphisms of the extended bicyclic semigroup $\boldsymbol{B}_{\mathbb{Z}}$ is established in \cite{Gutik-Maksymyk-2017} and there it is proved that $\mathbf{Aut}(\boldsymbol{B}_{\mathbb{Z}})$ is isomorphic to the additive group of integers $\mathbb{Z}(+)$. Also in \cite{Gutik-Prokhorenkova-Sekh=2021} the semigroups of endomorphisms of the bicyclic  semigroup and  the extended bicyclic semigroup are described.

Later we assume that an ${\omega}$-closed family $\mathscr{F}$ consists of inductive nonempty subsets of $\omega$.

In this paper we study automorphisms of the semigroup $\boldsymbol{B}_{Z\mathbb{}}^{\mathscr{F}}$ with the family $\mathscr{F}$ of inductive nonempty subsets of $\omega$ and prove that the group $\mathbf{Aut}(\boldsymbol{B}_{\mathbb{Z}}^{\mathscr{F}})$ of automorphisms of the semigroup $\boldsymbol{B}_{Z\mathbb{}}^{\mathscr{F}}$ is isomorphic to the additive group of integers.

\section{Algebraic properties of the semigroup $\boldsymbol{B}_{\mathbb{Z}}^{\mathscr{F}}$}\label{section-2}

\begin{proposition}\label{proposition-2.1}
Let $\mathscr{F}$ be an arbitrary nonempty $\omega$-closed family of subsets of $\omega$ and let $n_0=\min\left\{\bigcup\mathscr{F}\right\}$. Then the following statements hold:
\begin{enumerate}
  \item\label{proposition-2.1(1)} $\mathscr{F}_0=\left\{-n_0+F\colon F\in\mathscr{F}\right\}$ is an $\omega$-closed family of subsets of $\omega$;
  \item\label{proposition-2.1(2)} the semigroups $\boldsymbol{B}_{\mathbb{Z}}^{\mathscr{F}}$ and $\boldsymbol{B}_{\mathbb{Z}}^{\mathscr{F}_0}$ are isomorphic by the mapping
\begin{equation*}
  (i,j,F)\mapsto(i,j,-n_0+F), \qquad i,j\in\mathbb{Z};
\end{equation*}
\end{enumerate}
\end{proposition}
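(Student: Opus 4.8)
The organizing idea is that the assignment $\tau\colon F\mapsto -n_0+F$ is translation of subsets of $\mathbb{Z}$ by the fixed constant $-n_0$, and such a translation is injective and commutes with both intersection and any further translation; that is, $(-n_0+A)\cap(-n_0+B)=-n_0+(A\cap B)$ and $-k+(-n_0+A)=-n_0+(-k+A)$ for all $A,B\subseteq\mathbb{Z}$ and $k\in\mathbb{Z}$ (respecting the convention $-n_0+\varnothing=\varnothing$). Both statements reduce to these two identities together with the role of $n_0$ as a lower bound for $\bigcup\mathscr{F}$.

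For \eqref{proposition-2.1(1)} I would first check that $\mathscr{F}_0\subseteq\mathscr{P}(\omega)$. Since $n_0=\min\left\{\bigcup\mathscr{F}\right\}$, every $k\in F$ with $F\in\mathscr{F}$ satisfies $k\geqslant n_0$, so $-n_0+k\geqslant 0$ and hence $-n_0+F\subseteq\omega$; this is precisely the place where the specific choice of $n_0$ is used. To verify $\omega$-closedness, take $G_1=-n_0+F_1$ and $G_2=-n_0+F_2$ in $\mathscr{F}_0$ with $F_1,F_2\in\mathscr{F}$ and $n\in\omega$. Using the two identities above,
\[
G_1\cap(-n+G_2)=(-n_0+F_1)\cap\bigl(-n_0+(-n+F_2)\bigr)=-n_0+\bigl(F_1\cap(-n+F_2)\bigr).
\]
Since $\mathscr{F}$ is $\omega$-closed, $F_1\cap(-n+F_2)\in\mathscr{F}$, so the right-hand side lies in $\mathscr{F}_0$, as required.

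For \eqref{proposition-2.1(2)}, let $\Phi\colon(i,j,F)\mapsto(i,j,-n_0+F)$. First I would observe that $\Phi$ is well defined on the relevant semigroup: because $-n_0+F=\varnothing$ if and only if $F=\varnothing$, we have $\varnothing\in\mathscr{F}$ if and only if $\varnothing\in\mathscr{F}_0$, and $\Phi$ carries the ideal $\boldsymbol{I}$ of $\boldsymbol{B}_{\mathbb{Z}}^{\mathscr{F}}$ onto the ideal $\boldsymbol{I}$ of $\boldsymbol{B}_{\mathbb{Z}}^{\mathscr{F}_0}$, so it descends to the Rees quotients in the case $\varnothing\in\mathscr{F}$. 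Bijectivity is immediate, since $\tau$ is a bijection of $\mathscr{F}$ onto $\mathscr{F}_0$ with inverse $G\mapsto n_0+G$ and $\Phi$ is the identity on the first two coordinates. It then remains to check that $\Phi$ preserves the operation~\eqref{eq-1.2}, and I would split into the two cases $j_1\leqslant i_2$ and $j_1\geqslant i_2$. In each case the first two coordinates of the product are untouched by $\Phi$, and the third coordinate is handled by the same two identities: for instance, when $j_1\leqslant i_2$ the third coordinate of $\Phi\bigl((i_1,j_1,F_1)\cdot(i_2,j_2,F_2)\bigr)$ is $-n_0+\bigl((j_1-i_2+F_1)\cap F_2\bigr)$, while that of $\Phi(i_1,j_1,F_1)\cdot\Phi(i_2,j_2,F_2)$ is $\bigl(j_1-i_2+(-n_0+F_1)\bigr)\cap(-n_0+F_2)=-n_0+\bigl((j_1-i_2+F_1)\cap F_2\bigr)$, and the two coincide; the case $j_1\geqslant i_2$ is symmetric.

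The computations are entirely routine, so I do not expect a genuine obstacle; the one point requiring attention is the inclusion $\mathscr{F}_0\subseteq\mathscr{P}(\omega)$, which is exactly what forces the normalizing constant to be $n_0=\min\left\{\bigcup\mathscr{F}\right\}$, as any larger shift would push some element of $\bigcup\mathscr{F}$ below $0$. One should also note that $n_0$ is well defined because $\omega$ is well ordered and, in the setting of the paper, $\bigcup\mathscr{F}\neq\varnothing$.
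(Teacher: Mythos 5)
Your proof is correct and is essentially the argument the paper delegates by citation: the paper proves Proposition~2.1 by referring to Proposition~1 of \cite{Gutik-Mykhalenych=2021} and noting the proof of part (2) is analogous, and that argument is exactly your direct verification that translation by $-n_0$ commutes with intersections and shifts. Your explicit treatment of the case $\varnothing\in\mathscr{F}$ (checking that the map carries the ideal $\boldsymbol{I}$ onto $\boldsymbol{I}$ and hence descends to the Rees quotients) is a point the paper's citation leaves implicit, and it is handled correctly.
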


\begin{proof}
Statement \eqref{proposition-2.1(1)} is proved in \cite[Proposition~1$(1)$]{Gutik-Mykhalenych=2021}. The proof of \eqref{proposition-2.1(2)} is similar to the one of Proposition~1$(2)$  from \cite{Gutik-Mykhalenych=2021}.
\end{proof}

Suppose that $\mathscr{F}$ is an $\omega$-closed family of inductive subsets of $\omega$. Fix an arbitrary $k\in\mathbb{Z}$. If $[0)\in\mathscr{F}$ and $[p)\in\mathscr{F}$ for some $p\in\omega$ then for any $i,j\in\mathbb{Z}$ and  we have that
\begin{align*}
  (k,k,[0))\cdot(i,j,[p)) &=
  \left\{
    \begin{array}{ll}
      (k-k+i,j,(k-i+[0))\cap[p)), & \hbox{if~} k<i;\\
      (k,j,[0)\cap[p))          , & \hbox{if~} k=i;\\
      (k,k-i+j,[0)\cap(i-k+[p))), & \hbox{if~} k>i
    \end{array}
  \right.=\\
  &=\left\{
    \begin{array}{ll}
      (i,j,[p)),                & \hbox{if~} k<i;\\
      (k,j,[p)),                & \hbox{if~} k=i;\\
      (k,k-i+j,[0)\cap[i-k+p)), & \hbox{if~} k>i
    \end{array}
  \right.
\end{align*}
and
\begin{align*}
(i,j,[p))\cdot(k,k,[0)) &=
  \left\{
    \begin{array}{ll}
      (i-j+k,k,(j-k+[p))\cap[0)), & \hbox{if~} j<k;\\
      (i,k,[p)\cap[0))          , & \hbox{if~} j=k;\\
      (i,j-k+k,[p)\cap(k-j+[0))), & \hbox{if~} j>k
    \end{array}
  \right.=\\
  &=\left\{
    \begin{array}{ll}
      (i-j+k,k,[j-k+p)\cap[0)), & \hbox{if~} j<k;\\
      (i,k,[p)),                & \hbox{if~} j=k;\\
      (i,j,[p),                 & \hbox{if~} j>k.
    \end{array}
  \right.
\end{align*}
Therefore the above equalities imply that
\begin{align*}
  (k,k,[0))\cdot \boldsymbol{B}_{\mathbb{Z}}^{\mathscr{F}}\cdot(k,k,[0)) &=(k,k,[0))\cdot \boldsymbol{B}_{\mathbb{Z}}^{\mathscr{F}}\cap \boldsymbol{B}_{\mathbb{Z}}^{\mathscr{F}}\cdot(k,k,[0))= \\
  &=\left\{(i,j,[p))\colon i,j\geqslant k, \; [p)\in\mathscr{F}\right\}
\end{align*}
for an arbitrary $k\in\mathbb{Z}$. We define $\boldsymbol{B}_{\mathbb{Z}}^{\mathscr{F}}[k,k,0)=(k,k,[0))\cdot \boldsymbol{B}_{\mathbb{Z}}^{\mathscr{F}}\cap \boldsymbol{B}_{\mathbb{Z}}^{\mathscr{F}}\cdot(k,k,[0))$. It is obvious that $\boldsymbol{B}_{\mathbb{Z}}^{\mathscr{F}}[k,k,0)$ is a subsemigroup  of $\boldsymbol{B}_{\mathbb{Z}}^{\mathscr{F}}$.

\begin{proposition}\label{proposition-2.2}
Let $\mathscr{F}$ be an arbitrary nonempty $\omega$-closed family of inductive nonempty subsets of $\omega$  such that $[0)\in\mathscr{F}$. Then the subsemigroup  $\boldsymbol{B}_{\mathbb{Z}}^{\mathscr{F}}[k,k,0)$ of $\boldsymbol{B}_{\mathbb{Z}}^{\mathscr{F}}$ is isomorphic to $\boldsymbol{B}_{\omega}^{\mathscr{F}}$.
\end{proposition}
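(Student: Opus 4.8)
The plan is to exhibit the isomorphism explicitly as the ``shift by $k$'' map. Define
\[
\mathfrak{I}\colon \boldsymbol{B}_{\mathbb{Z}}^{\mathscr{F}}[k,k,0)\to\boldsymbol{B}_{\omega}^{\mathscr{F}}, \qquad (i,j,F)\mapsto(i-k,j-k,F).
\]
By the calculation preceding the statement we know $\boldsymbol{B}_{\mathbb{Z}}^{\mathscr{F}}[k,k,0)=\left\{(i,j,[p))\colon i,j\geqslant k,\ [p)\in\mathscr{F}\right\}$, and since every member of $\mathscr{F}$ is a nonempty inductive subset of $\omega$ we have $\varnothing\notin\mathscr{F}$; hence neither $\boldsymbol{B}_{\mathbb{Z}}^{\mathscr{F}}$ nor $\boldsymbol{B}_{\omega}^{\mathscr{F}}$ is formed as a Rees quotient, and both carry the operation \eqref{eq-1.2} on the honest Cartesian products. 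For $i,j\geqslant k$ we have $i-k,j-k\in\omega$, so $\mathfrak{I}$ really maps into $\boldsymbol{B}_{\omega}\times\mathscr{F}$.

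First I would check that $\mathfrak{I}$ is a bijection. This is immediate: the assignment $(a,b,F)\mapsto(a+k,b+k,F)$ sends $\boldsymbol{B}_{\omega}\times\mathscr{F}$ into $\boldsymbol{B}_{\mathbb{Z}}^{\mathscr{F}}[k,k,0)$ and is a two-sided inverse of $\mathfrak{I}$, because $a,b\geqslant 0$ forces $a+k,b+k\geqslant k$ and the third coordinate is untouched.

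The substance of the proof is verifying that $\mathfrak{I}$ is a homomorphism, which I would do by splitting into the two cases of \eqref{eq-1.2}. The key observation is that the translation $x\mapsto x-k$ is order preserving, so for $i_1,j_1,i_2,j_2\geqslant k$ the dichotomy $j_1\leqslant i_2$ versus $j_1\geqslant i_2$ is exactly the dichotomy $j_1-k\leqslant i_2-k$ versus $j_1-k\geqslant i_2-k$; thus multiplying the images $\mathfrak{I}(i_1,j_1,F_1)$ and $\mathfrak{I}(i_2,j_2,F_2)$ falls into the same branch of \eqref{eq-1.2} as multiplying the original pair. In that branch the first two coordinates of the product transform correctly under the shift, since, for instance in the case $j_1\leqslant i_2$, one has $(i_1-k)-(j_1-k)+(i_2-k)=i_1-j_1+i_2-k$ while the second coordinate is literally $j_2-k$. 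Finally, the third coordinate of the product is built from the difference $j_1-i_2$ (equivalently $i_2-j_1$), which is invariant under the simultaneous shift, so the set $(j_1-i_2+F_1)\cap F_2$ produced in $\boldsymbol{B}_{\omega}^{\mathscr{F}}$ coincides with the one produced in $\boldsymbol{B}_{\mathbb{Z}}^{\mathscr{F}}[k,k,0)$. Carrying out the symmetric computation in the case $j_1\geqslant i_2$ then gives $\mathfrak{I}\big((i_1,j_1,F_1)\cdot(i_2,j_2,F_2)\big)=\mathfrak{I}(i_1,j_1,F_1)\cdot\mathfrak{I}(i_2,j_2,F_2)$, so that $\mathfrak{I}$ is an isomorphism.

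I do not expect a genuine obstacle here; the only place to be careful is the bookkeeping in the homomorphism check, specifically the claim that the set-valued third coordinate is unchanged. That step works precisely because its defining formula in \eqref{eq-1.2} depends only on differences of the integer coordinates, and these differences are shift-invariant. Were the formula to involve the coordinates themselves rather than their differences, the shift would fail to intertwine the two operations.
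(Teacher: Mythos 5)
Your proposal is correct and is essentially the paper's own proof: the paper uses the same shift-by-$k$ map, merely written in the inverse direction $\mathfrak{I}\colon \boldsymbol{B}_{\omega}^{\mathscr{F}}\to \boldsymbol{B}_{\mathbb{Z}}^{\mathscr{F}}[k,k,0)$, $(i,j,[p))\mapsto(i+k,j+k,[p))$, and verifies the homomorphism property by the same case analysis on $j_1<i_2$, $j_1=i_2$, $j_1>i_2$, exploiting exactly your observation that the case dichotomy and the set-valued third coordinate depend only on shift-invariant differences of the integer coordinates.
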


\begin{proof}
Since the family $\mathscr{F}$ does not contain  the empty set, $\boldsymbol{B}_{\mathbb{Z}}^{\mathscr{F}}= (\boldsymbol{B}_{\mathbb{Z}}\times\mathscr{F},\cdot)$. We define a map $\mathfrak{I}\colon \boldsymbol{B}_{\omega}^{\mathscr{F}}\to \boldsymbol{B}_{\mathbb{Z}}^{\mathscr{F}}[k,k,0)$ in the following way $(i,j,[p))\mapsto(i+k,j+k,[p))$. It is obvious that $\mathfrak{I}$ is a bijection.  Then for any $i_1,i_2,j_1,j_2\in \mathbb{Z}$ and $F_1,F_2\in\mathscr{F}$ we have that
\begin{align*}
   \mathfrak{I}((i_1,j_1,F_1)\cdot(i_2,j_2,F_2))&=
  \left\{
    \begin{array}{ll}
       \mathfrak{I}(i_1-j_1+i_2,j_2,(j_1-i_2+F_1)\cap F_2), & \hbox{if~} j_1<i_2;\\
       \mathfrak{I}(i_1,j_2,F_1\cap F_2),                   & \hbox{if~} j_1=i_2;\\
       \mathfrak{I}(i_1,j_1-i_2+j_2,F_1\cap (i_2-j_1+F_2)), & \hbox{if~} j_1>i_2
    \end{array}
  \right.=\\
  &=
  \left\{
    \begin{array}{ll}
      (i_1-j_1+i_2+k,j_2+k,((j_1-i_2+F_1)\cap F_2)), & \hbox{if~} j_1<i_2;\\
      (i_1+k,j_2+k,F_1\cap F_2),                     & \hbox{if~} j_1=i_2;\\
      (i_1+k,j_1-i_2+j_2+k,(F_1\cap (i_2-j_1+F_2))), & \hbox{if~} j_1>i_2
    \end{array}
  \right.
\end{align*}
and
\begin{align*}
   \mathfrak{I}(i_1,j_1,F_1)&\cdot\mathfrak{I}(i_2+k,j_2+k,F_2)=(i_1+k,j_1+k,F_1)\cdot(i_2+k,j_2+k,F_2)=\\
  &=\left\{
    \begin{array}{ll}
       (i_1-j_1+i_2+k,j_2+k,(j_1-i_2+F_1)\cap F_2), & \hbox{if~} j_1+k<i_2+k;\\
       (i_1+k,j_2+k,F_1\cap F_2),                   & \hbox{if~} j_1+k=i_2+k;\\
       (i_1+k,j_1-i_2+j_2+k,F_1\cap(i_2-j_1+F_2)),  & \hbox{if~} j_1+k>i_2+k
    \end{array}
  \right. =  \\
  &=\left\{
    \begin{array}{ll}
       (i_1-j_1+i_2+k,j_2+k,(j_1-i_2+F_1)\cap F_2), & \hbox{if~} j_1<i_2;\\
       (i_1+k,j_2+k,F_1\cap F_2),                   & \hbox{if~} j_1=i_2;\\
       (i_1+k,j_1-i_2+j_2+k,F_1\cap(i_2-j_1+F_2)),  & \hbox{if~} j_1>i_2
    \end{array}
  \right.
 \end{align*}
and hence $\mathfrak{I}$ is a homomorphism which implies the statement of the proposition.
\end{proof}

By Remarks \ref{remark-1.1}\eqref{remark-1.1(2)} and \ref{remark-1.1}\eqref{remark-1.1(3)} every nonempty subset $F\in\mathscr{F}$ contains the least element, and hence later for every nonempty set $F\in\mathscr{F}$ we denote $n_F=\min F$.

\smallskip

Later we need the following lemma from \cite{Gutik-Mykhalenych=2021}.

\begin{lemma}[\!{\cite{Gutik-Mykhalenych=2021}}]\label{lemma-2.3}
Let $\mathscr{F}$ be an ${\omega}$-closed family of inductive subsets of $\omega$. Let $F_1$ and $F_2$ be elements of $\mathcal{F}$ such that $n_{F_1}<n_{F_2}$. Then for any positive integer $k\in\{n_{F_1}+1,\ldots,n_{F_2}-1\}$ there exists $F\in\mathscr{F}$ such that $F=[k)$.
\end{lemma}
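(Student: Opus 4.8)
The plan is to use directly the defining property of an $\omega$-closed family, namely that $F_1\cap(-n+F_2)\in\mathscr{F}$ for every $n\in\omega$, and to realise the desired ray $[k)$ as one such intersection for a suitably chosen shift $n$. First I would invoke Remark~\ref{remark-1.1}\eqref{remark-1.1(2)} to put the two given sets into normal form: since $F_1$ and $F_2$ are nonempty inductive subsets of $\omega$, we have $F_1=[n_{F_1})$ and $F_2=[n_{F_2})$, with $n_{F_1}<n_{F_2}$ by hypothesis. The whole problem thereby reduces to a statement about rays of $\omega$, and the only freedom available is the choice of the integer $n$ in the closure condition.

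The key computation is to evaluate $F_1\cap(-n+F_2)$ as $n$ varies. Shifting $F_2$ downward gives
\begin{equation*}
  -n+F_2=-n+[n_{F_2})=\{z\in\mathbb{Z}\colon z\geqslant n_{F_2}-n\},
\end{equation*}
a ray of $\mathbb{Z}$ that dips below $0$ once $n>n_{F_2}$. Intersecting with $F_1=[n_{F_1})\subseteq\omega$ truncates this ray from below, so that
\begin{equation*}
  F_1\cap(-n+F_2)=\left[\max\{n_{F_1},\,n_{F_2}-n\}\right).
\end{equation*}
Hence, as $n$ runs through $\{0,1,\ldots,n_{F_2}-n_{F_1}\}$, the left endpoint $\max\{n_{F_1},n_{F_2}-n\}$ sweeps through every value in $\{n_{F_1},n_{F_1}+1,\ldots,n_{F_2}\}$, and each corresponding ray belongs to $\mathscr{F}$ by $\omega$-closedness.

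Finally, given $k\in\{n_{F_1}+1,\ldots,n_{F_2}-1\}$, I would set $n=n_{F_2}-k$. The inequalities $n_{F_1}<k<n_{F_2}$ guarantee $1\leqslant n\leqslant n_{F_2}-n_{F_1}-1$, so $n\in\omega$ is an admissible shift; and since $n_{F_2}-n=k>n_{F_1}$, the displayed formula yields $F_1\cap(-n+F_2)=[k)$. Taking $F=[k)$ then completes the argument. The proof is essentially index bookkeeping; the one point that genuinely requires care is that $-n+F_2$ must be computed in $\mathbb{Z}$ (it can contain negative integers), so that the truncation to $\omega$ effected by the intersection with $F_1$ is what forces the endpoint to be $\max\{n_{F_1},n_{F_2}-n\}$ rather than simply $n_{F_2}-n$. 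Beyond this I expect no real obstacle.
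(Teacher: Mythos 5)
Your proof is correct: the computation $F_1\cap(-n+F_2)=\left[\max\{n_{F_1},\,n_{F_2}-n\}\right)$ is valid, the choice $n=n_{F_2}-k$ is an admissible element of $\omega$ under the hypothesis $n_{F_1}<k<n_{F_2}$, and it produces exactly $[k)\in\mathscr{F}$ by $\omega$-closedness. Note that the paper itself gives no proof of this lemma, importing it from \cite{Gutik-Mykhalenych=2021}; your direct argument from the defining closure condition, after normalizing $F_1=[n_{F_1})$ and $F_2=[n_{F_2})$ via Remark~\ref{remark-1.1}\eqref{remark-1.1(2)}, is precisely the natural argument one expects there, so there is nothing further to reconcile.
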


Proposition~\ref{proposition-2.1} implies that without loss of generality later we may assume that $[0)\in \mathscr{F}$ for any ${\omega}$-closed family $\mathscr{F}$ of inductive subsets of $\omega$. Hence these arguments and Lemma~5 of \cite{Gutik-Mykhalenych=2020} imply the following proposition.

\begin{proposition}\label{proposition-2.4}
Let $\mathscr{F}$ be an infinite ${\omega}$-closed family of inductive nonempty subsets of $\omega$. Then the diagram on Fig.~\ref{fig-2.1}
\begin{figure}[h]
\vskip.1cm
\begin{center}
\tiny{
\begin{tikzpicture}[scale=.7]
\draw[>=latex,->,thick] (0,9.) -- (0,8.3);
\draw (0,8) node {$(-4,-4,[0))$};
\draw[>=latex,->,thick] (0,7.7) -- (0,6.3);
\draw[>=latex,->,thick] (.5,7.7) -- (1.5,7.3);
\draw (0,6) node {$(-3,-3,[0))$};
\draw[>=latex,->,thick] (0,5.7) -- (0,4.3);
\draw[>=latex,->,thick] (.5,5.7) -- (1.5,5.3);
\draw (0,4) node {$(-2,-2,[0))$};
\draw[>=latex,->,thick] (0,3.7) -- (0,2.3);
\draw[>=latex,->,thick] (.5,3.7) -- (1.5,3.3);
\draw (0,2) node {$(-1,-1,[0))$};
\draw[>=latex,->,thick] (0,1.7) -- (0,0.3);
\draw[>=latex,->,thick] (.5,1.7) -- (1.5,1.3);
\draw (0,0) node {$(0,0,[0))$};
\draw[>=latex,->,thick] (0,-.3) -- (0,-1.7);
\draw[>=latex,->,thick] (.5,-.3) -- (1.5,-.7);
\draw (0,-2) node {$(1,1,[0))$};
\draw[>=latex,->,thick] (0,-2.3) -- (0,-3.7);
\draw[>=latex,->,thick] (.5,-2.3) -- (1.5,-2.7);
\draw (0,-4) node {$(2,2,[0))$};
\draw[>=latex,->,thick] (0,-4.3) -- (0,-5.7);
\draw[>=latex,->,thick] (.5,-4.3) -- (1.5,-4.7);
\draw (0,-6) node {$(3,3,[0))$};
\draw[>=latex,->,thick] (0,-6.3) -- (0,-7.7);
\draw[>=latex,->,thick] (.5,-6.3) -- (1.5,-6.7);
\draw (0,-8) node {$(4,4,[0))$};
\draw[>=latex,->,thick] (0,-8.3) -- (0,-9.7);
\draw[>=latex,->,thick] (.5,-8.3) -- (1.5,-8.7);
\draw (0,-10) node {$\boldsymbol{\cdots}$};

\draw (2,9) node {$\boldsymbol{\cdots}$};
\draw[>=latex,->,thick] (2,8.7) -- (2,7.3);
\draw[>=latex,->,thick] (1.5,8.7) -- (.5,8.3);
\draw[>=latex,->,thick] (2.5,8.7) -- (3.5,8.3);
\draw (2,7) node {$(-4,-4,[1))$};
\draw[>=latex,->,thick] (2,6.7) -- (2,5.3);
\draw[>=latex,->,thick] (1.5,6.7) -- (.5,6.3);
\draw[>=latex,->,thick] (2.5,6.7) -- (3.5,6.3);
\draw (2,5) node {$(-3,-3,[1))$};
\draw[>=latex,->,thick] (2,4.7) -- (2,3.3);
\draw[>=latex,->,thick] (1.5,4.7) -- (.5,4.3);
\draw[>=latex,->,thick] (2.5,4.7) -- (3.5,4.3);
\draw (2,3) node {$(-2,-2,[1))$};
\draw[>=latex,->,thick] (2,2.7) -- (2,1.3);
\draw[>=latex,->,thick] (1.5,2.7) -- (.5,2.3);
\draw[>=latex,->,thick] (2.5,2.7) -- (3.5,2.3);
\draw (2,1) node {$(-1,-1,[1))$};
\draw[>=latex,->,thick] (2,.7) -- (2,-0.7);
\draw[>=latex,->,thick] (1.5,.7) -- (.5,.3);
\draw[>=latex,->,thick] (2.5,.7) -- (3.5,.3);
\draw (2,-1) node {$(0,0,[1))$};
\draw[>=latex,->,thick] (2,-1.3) -- (2,-2.7);
\draw[>=latex,->,thick] (1.5,-1.3) -- (.5,-1.7);
\draw[>=latex,->,thick] (2.5,-1.3) -- (3.5,-1.7);
\draw (2,-3) node {$(1,1,[1))$};
\draw[>=latex,->,thick] (2,-3.3) -- (2,-4.7);
\draw[>=latex,->,thick] (1.5,-3.3) -- (.5,-3.7);
\draw[>=latex,->,thick] (2.5,-3.3) -- (3.5,-3.7);
\draw (2,-5) node {$(2,2,[1))$};
\draw[>=latex,->,thick] (2,-5.3) -- (2,-6.7);
\draw[>=latex,->,thick] (1.5,-5.3) -- (.5,-5.7);
\draw[>=latex,->,thick] (2.5,-5.3) -- (3.5,-5.7);
\draw (2,-7) node {$(3,3,[1))$};
\draw[>=latex,->,thick] (2,-7.3) -- (2,-8.7);
\draw[>=latex,->,thick] (1.5,-7.3) -- (.5,-7.7);
\draw[>=latex,->,thick] (2.5,-7.3) -- (3.5,-7.7);
\draw (2,-9) node {$(4,4,[1))$};
\draw[>=latex,->,thick] (2,-9.3) -- (2,-10);
\draw[>=latex,->,thick] (1.5,-9.3) -- (.5,-9.7);
\draw[>=latex,->,thick] (2.5,-9.3) -- (3.5,-9.7);
\draw[>=latex,->,thick] (4,9.) -- (4,8.3);
\draw (4,8) node {$(-5,-5,[2))$};
\draw[>=latex,->,thick] (4,7.7) -- (4,6.3);
\draw[>=latex,->,thick] (4.5,7.7) -- (5.5,7.3);
\draw[>=latex,->,thick] (3.5,7.7) -- (2.5,7.3);
\draw (4,6) node {$(-4,-4,[2))$};
\draw[>=latex,->,thick] (4,5.7) -- (4,4.3);
\draw[>=latex,->,thick] (4.5,5.7) -- (5.5,5.3);
\draw[>=latex,->,thick] (3.5,5.7) -- (2.5,5.3);
\draw (4,4) node {$(-3,-3,[2))$};
\draw[>=latex,->,thick] (4,3.7) -- (4,2.3);
\draw[>=latex,->,thick] (4.5,3.7) -- (5.5,3.3);
\draw[>=latex,->,thick] (3.5,3.7) -- (2.5,3.3);
\draw (4,2) node {$(-2,-2,[2))$};
\draw[>=latex,->,thick] (4,1.7) -- (4,.3);
\draw[>=latex,->,thick] (4.5,1.7) -- (5.5,1.3);
\draw[>=latex,->,thick] (3.5,1.7) -- (2.5,1.3);
\draw (4,0) node {$(-1,-1,[2))$};
\draw[>=latex,->,thick] (4,-.3) -- (4,-1.7);
\draw[>=latex,->,thick] (4.5,-.3) -- (5.5,-.7);
\draw[>=latex,->,thick] (3.5,-.3) -- (2.5,-.7);
\draw (4,-2) node {$(0,0,[2))$};
\draw[>=latex,->,thick] (4,-2.3) -- (4,-3.7);
\draw[>=latex,->,thick] (4.5,-2.3) -- (5.5,-2.7);
\draw[>=latex,->,thick] (3.5,-2.3) -- (2.5,-2.7);
\draw (4,-4) node {$(1,1,[2))$};
\draw[>=latex,->,thick] (4,-4.3) -- (4,-5.7);
\draw[>=latex,->,thick] (4.5,-4.3) -- (5.5,-4.7);
\draw[>=latex,->,thick] (3.5,-4.3) -- (2.5,-4.7);
\draw (4,-6) node {$(2,2,[2))$};
\draw[>=latex,->,thick] (4,-6.3) -- (4,-7.7);
\draw[>=latex,->,thick] (4.5,-6.3) -- (5.5,-6.7);
\draw[>=latex,->,thick] (3.5,-6.3) -- (2.5,-6.7);
\draw (4,-8) node {$(3,3,[2))$};
\draw[>=latex,->,thick] (4,-8.3) -- (4,-9.7);
\draw[>=latex,->,thick] (4.5,-8.3) -- (5.5,-8.7);
\draw[>=latex,->,thick] (3.5,-8.3) -- (2.5,-8.7);
\draw (4,-10) node {$\boldsymbol{\cdots}$};
\draw (6,9) node {$\boldsymbol{\cdots}$};
\draw[>=latex,->,thick] (6,8.7) -- (6,7.3);
\draw[>=latex,->,thick] (5.5,8.7) -- (4.5,8.3);
\draw[>=latex,->,thick] (6.5,8.7) -- (7.5,8.3);
\draw (6,7) node {$(-5,-5,[3))$};
\draw[>=latex,->,thick] (6,6.7) -- (6,5.3);
\draw[>=latex,->,thick] (5.5,6.7) -- (4.5,6.3);
\draw[>=latex,->,thick] (6.5,6.7) -- (7.5,6.3);
\draw (6,5) node {$(-4,-4,[3))$};
\draw[>=latex,->,thick] (6,4.7) -- (6,3.3);
\draw[>=latex,->,thick] (5.5,4.7) -- (4.5,4.3);
\draw[>=latex,->,thick] (6.5,4.7) -- (7.5,4.3);
\draw (6,3) node {$(-3,-3,[3))$};
\draw[>=latex,->,thick] (6,2.7) -- (6,1.3);
\draw[>=latex,->,thick] (5.5,2.7) -- (4.5,2.3);
\draw[>=latex,->,thick] (6.5,2.7) -- (7.5,2.3);
\draw (6,1) node {$(-2,-2,[3))$};
\draw[>=latex,->,thick] (6,.7) -- (6,-.7);
\draw[>=latex,->,thick] (5.5,.7) -- (4.5,.3);
\draw[>=latex,->,thick] (6.5,.7) -- (7.5,.3);
\draw (6,-1) node {$(-1,-1,[3))$};
\draw[>=latex,->,thick] (6,-1.3) -- (6,-2.7);
\draw[>=latex,->,thick] (5.5,-1.3) -- (4.5,-1.7);
\draw[>=latex,->,thick] (6.5,-1.3) -- (7.5,-1.7);
\draw (6,-3) node {$(0,0,[3))$};
\draw[>=latex,->,thick] (6,-3.3) -- (6,-4.7);
\draw[>=latex,->,thick] (5.5,-3.3) -- (4.5,-3.7);
\draw[>=latex,->,thick] (6.5,-3.3) -- (7.5,-3.7);
\draw (6,-5) node {$(1,1,[3))$};
\draw[>=latex,->,thick] (6,-5.3) -- (6,-6.7);
\draw[>=latex,->,thick] (5.5,-5.3) -- (4.5,-5.7);
\draw[>=latex,->,thick] (6.5,-5.3) -- (7.5,-5.7);
\draw (6,-7) node {$(2,2,[3))$};
\draw[>=latex,->,thick] (6,-7.3) -- (6,-8.7);
\draw[>=latex,->,thick] (5.5,-7.3) -- (4.5,-7.7);
\draw[>=latex,->,thick] (6.5,-7.3) -- (7.5,-7.7);
\draw (6,-9) node {$(3,3,[3))$};
\draw[>=latex,->,thick] (6,-9.3) -- (6,-10);
\draw[>=latex,->,thick] (5.5,-9.3) -- (4.5,-9.7);
\draw[>=latex,->,thick] (6.5,-9.3) -- (7.5,-9.7);
\draw[>=latex,->,thick] (8,9.) -- (8,8.3);
\draw (8,8) node {$(-6,-6,[4))$};
\draw[>=latex,->,thick] (8,7.7) -- (8,6.3);
\draw[>=latex,->,thick] (8.5,7.7) -- (9.5,7.3);
\draw[>=latex,->,thick] (7.5,7.7) -- (6.5,7.3);
\draw (8,6) node {$(-5,-5,[4))$};
\draw[>=latex,->,thick] (8,5.7) -- (8,4.3);
\draw[>=latex,->,thick] (8.5,5.7) -- (9.5,5.3);
\draw[>=latex,->,thick] (7.5,5.7) -- (6.5,5.3);
\draw (8,4) node {$(-4,-4,[4))$};
\draw[>=latex,->,thick] (8,3.7) -- (8,2.3);
\draw[>=latex,->,thick] (8.5,3.7) -- (9.5,3.3);
\draw[>=latex,->,thick] (7.5,3.7) -- (6.5,3.3);
\draw (8,2) node {$(-3,-3,[4))$};
\draw[>=latex,->,thick] (8,1.7) -- (8,.3);
\draw[>=latex,->,thick] (8.5,1.7) -- (9.5,1.3);
\draw[>=latex,->,thick] (7.5,1.7) -- (6.5,1.3);
\draw (8,0) node {$(-2,-2,[4))$};
\draw[>=latex,->,thick] (8,-.3) -- (8,-1.7);
\draw[>=latex,->,thick] (8.5,-.3) -- (9.5,-.7);
\draw[>=latex,->,thick] (7.5,-.3) -- (6.5,-.7);
\draw (8,-2) node {$(-1,-1,[4))$};
\draw[>=latex,->,thick] (8,-2.3) -- (8,-3.7);
\draw[>=latex,->,thick] (8.5,-2.3) -- (9.5,-2.7);
\draw[>=latex,->,thick] (7.5,-2.3) -- (6.5,-2.7);
\draw (8,-4) node {$(0,0,[4))$};
\draw[>=latex,->,thick] (8,-4.3) -- (8,-5.7);
\draw[>=latex,->,thick] (8.5,-4.3) -- (9.5,-4.7);
\draw[>=latex,->,thick] (7.5,-4.3) -- (6.5,-4.7);
\draw (8,-6) node {$(1,1,[4))$};
\draw[>=latex,->,thick] (8,-6.3) -- (8,-7.7);
\draw[>=latex,->,thick] (8.5,-6.3) -- (9.5,-6.7);
\draw[>=latex,->,thick] (7.5,-6.3) -- (6.5,-6.7);
\draw (8,-8) node {$(2,2,[4))$};
\draw[>=latex,->,thick] (8,-8.3) -- (8,-9.7);
\draw[>=latex,->,thick] (8.5,-8.3) -- (9.5,-8.7);
\draw[>=latex,->,thick] (7.5,-8.3) -- (6.5,-8.7);
\draw (8,-10) node {$\boldsymbol{\cdots}$};
\draw (10,9) node {$\boldsymbol{\cdots}$};
\draw[>=latex,->,thick] (10,8.7) -- (10,7.3);
\draw[>=latex,->,thick] (9.5,8.7) -- (8.5,8.3);
\draw[>=latex,->,thick] (10.5,8.7) -- (11.5,8.3);
\draw (10,7) node {$(-6,-6,[5))$};
\draw[>=latex,->,thick] (10,6.7) -- (10,5.3);
\draw[>=latex,->,thick] (9.5,6.7) -- (8.5,6.3);
\draw[>=latex,->,thick] (10.5,6.7) -- (11.5,6.3);
\draw (10,5) node {$(-5,-5,[5))$};
\draw[>=latex,->,thick] (10,4.7) -- (10,3.3);
\draw[>=latex,->,thick] (9.5,4.7) -- (8.5,4.3);
\draw[>=latex,->,thick] (10.5,4.7) -- (11.5,4.3);
\draw (10,3) node {$(-4,-4,[5))$};
\draw[>=latex,->,thick] (10,2.7) -- (10,1.3);
\draw[>=latex,->,thick] (9.5,2.7) -- (8.5,2.3);
\draw[>=latex,->,thick] (10.5,2.7) -- (11.5,2.3);
\draw (10,1) node {$(-3,-3,[5))$};
\draw[>=latex,->,thick] (10,.7) -- (10,-.7);
\draw[>=latex,->,thick] (9.5,.7) -- (8.5,.3);
\draw[>=latex,->,thick] (10.5,.7) -- (11.5,.3);
\draw (10,-1) node {$(-2,-2,[5))$};
\draw[>=latex,->,thick] (10,-1.3) -- (10,-2.7);
\draw[>=latex,->,thick] (9.5,-1.3) -- (8.5,-1.7);
\draw[>=latex,->,thick] (10.5,-1.3) -- (11.5,-1.7);
\draw (10,-3) node {$(-1,-1,[5))$};
\draw[>=latex,->,thick] (10,-3.3) -- (10,-4.7);
\draw[>=latex,->,thick] (9.5,-3.3) -- (8.5,-3.7);
\draw[>=latex,->,thick] (10.5,-3.3) -- (11.5,-3.7);
\draw (10,-5) node {$(0,0,[5))$};
\draw[>=latex,->,thick] (10,-5.3) -- (10,-6.7);
\draw[>=latex,->,thick] (9.5,-5.3) -- (8.5,-5.7);
\draw[>=latex,->,thick] (10.5,-5.3) -- (11.5,-5.7);
\draw (10,-7) node {$(1,1,[5))$};
\draw[>=latex,->,thick] (10,-7.3) -- (10,-8.7);
\draw[>=latex,->,thick] (9.5,-7.3) -- (8.5,-7.7);
\draw[>=latex,->,thick] (10.5,-7.3) -- (11.5,-7.7);
\draw (10,-9) node {$(2,2,[5))$};
\draw[>=latex,->,thick] (10,-9.3) -- (10,-10);
\draw[>=latex,->,thick] (9.5,-9.3) -- (8.5,-9.7);
\draw[>=latex,->,thick] (10.5,-9.3) -- (11.5,-9.7);
\draw[>=latex,->,thick] (12,9.) -- (12,8.3);
\draw (12,8) node {$(-7,-7,[6))$};
\draw[>=latex,->,thick] (12,7.7) -- (12,6.3);
\draw[>=latex,->,thick] (12.5,7.7) -- (13.5,7.3);
\draw[>=latex,->,thick] (11.5,7.7) -- (10.5,7.3);
\draw (12,6) node {$(-6,-6,[6))$};
\draw[>=latex,->,thick] (12,5.7) -- (12,4.3);
\draw[>=latex,->,thick] (12.5,5.7) -- (13.5,5.3);
\draw[>=latex,->,thick] (11.5,5.7) -- (10.5,5.3);
\draw (12,4) node {$(-5,-5,[6))$};
\draw[>=latex,->,thick] (12,3.7) -- (12,2.3);
\draw[>=latex,->,thick] (12.5,3.7) -- (13.5,3.3);
\draw[>=latex,->,thick] (11.5,3.7) -- (10.5,3.3);
\draw (12,2) node {$(-4,-4,[6))$};
\draw[>=latex,->,thick] (12,1.7) -- (12,.3);
\draw[>=latex,->,thick] (12.5,1.7) -- (13.5,1.3);
\draw[>=latex,->,thick] (11.5,1.7) -- (10.5,1.3);
\draw (12,0) node {$(-3,-3,[6))$};
\draw[>=latex,->,thick] (12,-.3) -- (12,-1.7);
\draw[>=latex,->,thick] (12.5,-.3) -- (13.5,-.7);
\draw[>=latex,->,thick] (11.5,-.3) -- (10.5,-.7);
\draw (12,-2) node {$(-2,-2,[6))$};
\draw[>=latex,->,thick] (12,-2.3) -- (12,-3.7);
\draw[>=latex,->,thick] (12.5,-2.3) -- (13.5,-2.7);
\draw[>=latex,->,thick] (11.5,-2.3) -- (10.5,-2.7);
\draw (12,-4) node {$(-1,-1,[6))$};
\draw[>=latex,->,thick] (12,-4.3) -- (12,-5.7);
\draw[>=latex,->,thick] (12.5,-4.3) -- (13.5,-4.7);
\draw[>=latex,->,thick] (11.5,-4.3) -- (10.5,-4.7);
\draw (12,-6) node {$(0,0,[6))$};
\draw[>=latex,->,thick] (12,-6.3) -- (12,-7.7);
\draw[>=latex,->,thick] (12.5,-6.3) -- (13.5,-6.7);
\draw[>=latex,->,thick] (11.5,-6.3) -- (10.5,-6.7);
\draw (12,-8) node {$(1,1,[6))$};
\draw[>=latex,->,thick] (12,-8.3) -- (12,-9.7);
\draw[>=latex,->,thick] (12.5,-8.3) -- (13.5,-8.7);
\draw[>=latex,->,thick] (11.5,-8.3) -- (10.5,-8.7);
\draw (12,-10) node {$\boldsymbol{\cdots}$};
\draw (14,9) node {$\boldsymbol{\cdots}$};
\draw[>=latex,->,thick] (14,8.7) -- (14,7.3);
\draw[>=latex,->,thick] (13.5,8.7) -- (12.5,8.3);
\draw[>=latex,->,thick] (14.5,8.7) -- (15.5,8.3);
\draw (14,7) node {$(-7,-7,[7))$};
\draw[>=latex,->,thick] (14,6.7) -- (14,5.3);
\draw[>=latex,->,thick] (13.5,6.7) -- (12.5,6.3);
\draw[>=latex,->,thick] (14.5,6.7) -- (15.5,6.3);
\draw (14,5) node {$(-6,-6,[7))$};
\draw[>=latex,->,thick] (14,4.7) -- (14,3.3);
\draw[>=latex,->,thick] (13.5,4.7) -- (12.5,4.3);
\draw[>=latex,->,thick] (14.5,4.7) -- (15.5,4.3);
\draw (14,3) node {$(-5,-5,[7))$};
\draw[>=latex,->,thick] (14,2.7) -- (14,1.3);
\draw[>=latex,->,thick] (13.5,2.7) -- (12.5,2.3);
\draw[>=latex,->,thick] (14.5,2.7) -- (15.5,2.3);
\draw (14,1) node {$(-4,-4,[7))$};
\draw[>=latex,->,thick] (14,.7) -- (14,-.7);
\draw[>=latex,->,thick] (13.5,.7) -- (12.5,.3);
\draw[>=latex,->,thick] (14.5,.7) -- (15.5,.3);
\draw (14,-1) node {$(-3,-3,[7))$};
\draw[>=latex,->,thick] (14,-1.3) -- (14,-2.7);
\draw[>=latex,->,thick] (13.5,-1.3) -- (12.5,-1.7);
\draw[>=latex,->,thick] (14.5,-1.3) -- (15.5,-1.7);
\draw (14,-3) node {$(-2,-2,[7))$};
\draw[>=latex,->,thick] (14,-3.3) -- (14,-4.7);
\draw[>=latex,->,thick] (13.5,-3.3) -- (12.5,-3.7);
\draw[>=latex,->,thick] (14.5,-3.3) -- (15.5,-3.7);
\draw (14,-5) node {$(-1,-1,[7))$};
\draw[>=latex,->,thick] (14,-5.3) -- (14,-6.7);
\draw[>=latex,->,thick] (13.5,-5.3) -- (12.5,-5.7);
\draw[>=latex,->,thick] (14.5,-5.3) -- (15.5,-5.7);
\draw (14,-7) node {$(0,0,[7))$};
\draw[>=latex,->,thick] (14,-7.3) -- (14,-8.7);
\draw[>=latex,->,thick] (13.5,-7.3) -- (12.5,-7.7);
\draw[>=latex,->,thick] (14.5,-7.3) -- (15.5,-7.7);
\draw (14,-9) node {$(1,1,[7))$};
\draw[>=latex,->,thick] (14,-9.3) -- (14,-10);
\draw[>=latex,->,thick] (13.5,-9.3) -- (12.5,-9.7);
\draw[>=latex,->,thick] (14.5,-9.3) -- (15.5,-9.7);
\draw[>=latex,->,thick] (16,9.) -- (16,8.3);
\draw (16,8) node {$(-8,-8,[8))$};
\draw[>=latex,->,thick] (16,7.7) -- (16,6.3);
\draw[>=latex,->,thick] (16.5,7.7) -- (17.5,7.3);
\draw[>=latex,->,thick] (15.5,7.7) -- (14.5,7.3);
\draw (16,6) node {$(-7,-7,[8))$};
\draw[>=latex,->,thick] (16,5.7) -- (16,4.3);
\draw[>=latex,->,thick] (16.5,5.7) -- (17.5,5.3);
\draw[>=latex,->,thick] (15.5,5.7) -- (14.5,5.3);
\draw (16,4) node {$(-6,-6,[8))$};
\draw[>=latex,->,thick] (16,3.7) -- (16,2.3);
\draw[>=latex,->,thick] (16.5,3.7) -- (17.5,3.3);
\draw[>=latex,->,thick] (15.5,3.7) -- (14.5,3.3);
\draw (16,2) node {$(-5,-5,[8))$};
\draw[>=latex,->,thick] (16,1.7) -- (16,.3);
\draw[>=latex,->,thick] (16.5,1.7) -- (17.5,1.3);
\draw[>=latex,->,thick] (15.5,1.7) -- (14.5,1.3);
\draw (16,0) node {$(-4,-4,[8))$};
\draw[>=latex,->,thick] (16,-.3) -- (16,-1.7);
\draw[>=latex,->,thick] (16.5,-.3) -- (17.5,-.7);
\draw[>=latex,->,thick] (15.5,-.3) -- (14.5,-.7);
\draw (16,-2) node {$(-3,-3,[8))$};
\draw[>=latex,->,thick] (16,-2.3) -- (16,-3.7);
\draw[>=latex,->,thick] (16.5,-2.3) -- (17.5,-2.7);
\draw[>=latex,->,thick] (15.5,-2.3) -- (14.5,-2.7);
\draw (16,-4) node {$(-2,-2,[8))$};
\draw[>=latex,->,thick] (16,-4.3) -- (16,-5.7);
\draw[>=latex,->,thick] (16.5,-4.3) -- (17.5,-4.7);
\draw[>=latex,->,thick] (15.5,-4.3) -- (14.5,-4.7);
\draw (16,-6) node {$(-1,-1,[8))$};
\draw[>=latex,->,thick] (16,-6.3) -- (16,-7.7);
\draw[>=latex,->,thick] (16.5,-6.3) -- (17.5,-6.7);
\draw[>=latex,->,thick] (15.5,-6.3) -- (14.5,-6.7);
\draw (16,-8) node {$(0,0,[8))$};
\draw[>=latex,->,thick] (16,-8.3) -- (16,-9.7);
\draw[>=latex,->,thick] (16.5,-8.3) -- (17.5,-8.7);
\draw[>=latex,->,thick] (15.5,-8.3) -- (14.5,-8.7);
\draw (16,-10) node {$\boldsymbol{\cdots}$};
\draw (18,9) node {$\boldsymbol{\cdots}$};
\draw[>=latex,->,thick] (17.5,8.7) -- (16.5,8.3);
\draw (18,7) node {$\boldsymbol{\cdots}$};
\draw[>=latex,->,thick] (17.5,6.7) -- (16.5,6.3);
\draw (18,5) node {$\boldsymbol{\cdots}$};
\draw[>=latex,->,thick] (17.5,4.7) -- (16.5,4.3);
\draw (18,3) node {$\boldsymbol{\cdots}$};
\draw[>=latex,->,thick] (17.5,2.7) -- (16.5,2.3);
\draw (18,1) node {$\boldsymbol{\cdots}$};
\draw[>=latex,->,thick] (17.5,.7) -- (16.5,.3);
\draw (18,-1) node {$\boldsymbol{\cdots}$};
\draw[>=latex,->,thick] (17.5,-1.3) -- (16.5,-1.7);
\draw (18,-3) node {$\boldsymbol{\cdots}$};
\draw[>=latex,->,thick] (17.5,-3.3) -- (16.5,-3.7);
\draw (18,-5) node {$\boldsymbol{\cdots}$};
\draw[>=latex,->,thick] (17.5,-5.3) -- (16.5,-5.7);
\draw (18,-7) node {$\boldsymbol{\cdots}$};
\draw[>=latex,->,thick] (17.5,-7.3) -- (16.5,-7.7);
\draw (18,-9) node {$\boldsymbol{\cdots}$};
\draw[>=latex,->,thick] (17.5,-9.3) -- (16.5,-9.7);

\end{tikzpicture}
}
\end{center}
\vskip.1cm
\caption{The natural partial order on the band $E(\boldsymbol{B}_{\mathbb{Z}}^{\mathscr{F}})$}\label{fig-2.1}
\end{figure}
describes the natural partial order on the band of $\boldsymbol{B}_{\mathbb{Z}}^{\mathscr{F}}$.
\end{proposition}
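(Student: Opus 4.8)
The plan is to reduce the statement to an explicit description of the band together with an explicit formula for the natural partial order, and then to read the figure off that formula. First I would recall from \cite{Gutik-Pozdniakova=2021} that an element $(i,j,F)$ of $\boldsymbol{B}_{\mathbb{Z}}^{\mathscr{F}}$ is idempotent if and only if $i=j$, so that $E(\boldsymbol{B}_{\mathbb{Z}}^{\mathscr{F}})=\{(i,i,F)\colon i\in\mathbb{Z},\ F\in\mathscr{F}\}$; this is also immediate from \eqref{eq-1.2}, since $(i,i,F)\cdot(i,i,F)=(i,i,F\cap F)=(i,i,F)$. Next, using that $\mathscr{F}$ is infinite together with Remark~\ref{remark-1.1}\eqref{remark-1.1(2)}, Lemma~\ref{lemma-2.3} and the standing assumption $[0)\in\mathscr{F}$, I would show that $\mathscr{F}=\{[p)\colon p\in\omega\}$: every $F\in\mathscr{F}$ equals $[n_F)$ with $n_F=\min F$, the set $\{n_F\colon F\in\mathscr{F}\}$ contains $0$ and is gap-free by Lemma~\ref{lemma-2.3}, and being infinite it must exhaust $\omega$. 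Thus the vertices of Fig.~\ref{fig-2.1}, namely the triples $(i,i,[p))$ with $i\in\mathbb{Z}$ and $p\in\omega$, are exactly the elements of the band.

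The computational heart of the argument is the evaluation of $\preccurlyeq$ on these idempotents. Using \eqref{eq-1.2} I would compute, for $e=(i,i,[a))$ and $f=(k,k,[b))$, the products $e\cdot f$ and $f\cdot e$, obtaining
\begin{equation*}
(i,i,[a))\cdot(k,k,[b))=
\begin{cases}
(k,k,[\max\{a+i-k,\,b\})), & i\leqslant k,\\
(i,i,[\max\{a,\,b+k-i\})), & i\geqslant k,
\end{cases}
\end{equation*}
and the symmetric expression for $f\cdot e$, where throughout $[c)$ is read as the subset $\{x\in\omega\colon x\geqslant c\}$ and $[c)\cap[d)=[\max\{c,d\})$. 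Imposing $e\cdot f=f\cdot e=e$ then yields the criterion
\begin{equation*}
(i,i,[a))\preccurlyeq(k,k,[b))\quad\Longleftrightarrow\quad i\geqslant k\ \text{ and }\ b\leqslant a+(i-k),
\end{equation*}
which is equivalent to $(i,i+a)\geqslant(k,k+b)$ in the coordinatewise order on $\mathbb{Z}\times\mathbb{Z}$. In other words the map $(i,i,[a))\mapsto(i,i+a)$ is an anti-isomorphism of $(E(\boldsymbol{B}_{\mathbb{Z}}^{\mathscr{F}}),\preccurlyeq)$ onto the poset $\{(x,y)\in\mathbb{Z}\times\mathbb{Z}\colon y\geqslant x\}$ equipped with the coordinatewise order.

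Finally I would translate this criterion into the arrows of Fig.~\ref{fig-2.1}. From the coordinatewise picture the elements covered by $(i,i,[a))$ correspond to the two unit increments of $(i,i+a)$, so they are precisely $(i,i,[a+1))$ and $(i+1,i+1,[a-1))$, the latter only when $a\geqslant 1$, that is, away from the left-hand column $[0)$; these are exactly the right-going and left-going arrows of the diagram, whereas the vertical arrow $(i,i,[a))\to(i+1,i+1,[a))$ is the composite of the two and is included only for readability. It then remains to check soundness and completeness: every drawn arrow $u\to v$ satisfies $v\preccurlyeq u$, and conversely whenever $e\preccurlyeq f$ the pair $(f,e)$ is joined by a directed path, which is immediate from the coordinatewise description since $(i,i+a)$ is reached from any smaller $(k,k+b)$ by finitely many unit steps. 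I expect the main obstacle to lie in the bookkeeping of the order computation: the defining inequality is asymmetric, the three cases $i<k$, $i=k$, $i>k$ together with the truncation convention for $[\,\cdot\,)\cap[\,\cdot\,)$ must be handled without sign errors, and one must argue with care that the listed arrows generate $\preccurlyeq$ exactly, neither less nor more.
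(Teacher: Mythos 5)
Your proof is correct, and the computational core checks out: from \eqref{eq-1.2} one indeed gets $(i,i,[a))\cdot(k,k,[b))=(k,k,[\max\{a+i-k,b\}))$ for $i\leqslant k$ and $(i,i,[\max\{a,b+k-i\}))$ for $i\geqslant k$, whence $(i,i,[a))\preccurlyeq(k,k,[b))$ if and only if $i\geqslant k$ and $k+b\leqslant i+a$; this agrees with the criterion the paper imports from Corollary~1 of \cite{Gutik-Pozdniakova=2021} (compare the inequalities $m\geqslant i$ and $[0)\subseteq i-m+[p)$ used in the proof of Proposition~3.5). Your route is, however, genuinely different from the paper's, because the paper offers no computation at all: it disposes of the proposition in one sentence, via the normalization $[0)\in\mathscr{F}$ (Proposition~2.1) together with a citation of Lemma~5 of \cite{Gutik-Mykhalenych=2020}, where the description of the order on the idempotents is already available. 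You instead verify everything from first principles, and in doing so you make explicit two steps the paper leaves tacit but which the figure really requires: first, that an infinite $\omega$-closed $\mathscr{F}$ containing $[0)$ must equal $\{[p)\colon p\in\omega\}$ --- your argument via Remark~1.1(2), gap-freeness from Lemma~2.3, and unboundedness is exactly right, and it is what justifies the vertex set of Fig.~2.1; second, the soundness-and-completeness check that the reflexive-transitive closure of the drawn arrows is all of $\preccurlyeq$, with the vertical arrows correctly recognized as non-covers, since $(i+1,i+1,[a))\preccurlyeq(i,i,[a+1))\preccurlyeq(i,i,[a))$ exhibits them as composites of the two diagonal covers. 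What your approach buys is a self-contained verification independent of the companion papers, together with a transparent structural picture --- the anti-isomorphism $(i,i,[a))\mapsto(i,i+a)$ onto the coordinatewise-ordered half-plane $\{(x,y)\in\mathbb{Z}\times\mathbb{Z}\colon y\geqslant x\}$, from which the covering structure (two covers when $a\geqslant 1$, one when $a=0$) is immediate; what the paper's citation buys is brevity.
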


By the similar way for a finite ${\omega}$-closed family of inductive nonempty subsets of $\omega$ we obtain the following

\begin{proposition}\label{proposition-2.4a}
Let $\mathscr{F}=\{[0),\ldots,[k)\}$. Then the diagram on Fig.~\ref{fig-2.1} without elements of the form $(i,j,[p))$ and their arrows, $i,j\in\mathbb{Z}$, $p>k$, describes the natural partial order on the band of $\boldsymbol{B}_{\mathbb{Z}}^{\mathscr{F}}$.
\end{proposition}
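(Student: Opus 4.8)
The plan is to reduce Proposition~\ref{proposition-2.4a} to Proposition~\ref{proposition-2.4} by showing that the order between two idempotents is controlled by a purely numerical condition that is insensitive to the rest of $\mathscr{F}$. First I would record that, exactly as in the infinite case, an element $(i,j,F)$ of $\boldsymbol{B}_{\mathbb{Z}}^{\mathscr{F}}$ is idempotent if and only if $i=j$, so that for $\mathscr{F}=\{[0),\ldots,[k)\}$ the band is
\[
E\!\left(\boldsymbol{B}_{\mathbb{Z}}^{\mathscr{F}}\right)=\left\{(i,i,[p))\colon i\in\mathbb{Z},\ 0\leqslant p\leqslant k\right\},
\]
which is precisely the set of nodes of Fig.~\ref{fig-2.1} lying in the columns $[0),\ldots,[k)$, i.e. the nodes that remain after the deletion described in the statement.

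Next I would compute, via \eqref{eq-1.2}, the product of two idempotents $e=(i,i,[p))$ and $f=(m,m,[q))$:
\[
e\cdot f=
\begin{cases}
(m,m,(i-m+[p))\cap[q)), & \text{if } i\leqslant m,\\
(i,i,[p)\cap(m-i+[q))), & \text{if } i\geqslant m.
\end{cases}
\]
Since the idempotents of an inverse semigroup commute, $e\preccurlyeq f$ amounts to $e\cdot f=e$, and a direct inspection of the two cases yields the criterion
\[
(i,i,[p))\preccurlyeq(m,m,[q))\quad\Longleftrightarrow\quad i\geqslant m \ \text{ and } \ i-m\geqslant q-p .
\]
The essential observation is that this condition refers only to the integers $i,m,p,q$ and never to the ambient family: it is the very same condition governing the order in Proposition~\ref{proposition-2.4}. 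Hence the natural partial order on the finite band is precisely the restriction of the order of the infinite band to the idempotents with $p\leqslant k$. Equivalently, the map $(i,i,[p))\mapsto(i,\,i+p)$ identifies the band, order-reversingly, with the strip $\{(u,w)\colon u\leqslant w\leqslant u+k\}$ under the coordinatewise order, which makes the local nature of the covering relations transparent.

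It then remains to verify that the truncated \emph{picture}, and not merely the truncated \emph{order}, is correct: deleting all nodes $(i,j,[p))$ with $p>k$ together with their arrows must leave a diagram whose arrows still generate the restricted order. Using the criterion above I would determine the lower covers of a node $(m,m,[q))$: for $1\leqslant q\leqslant k-1$ these are $(m,m,[q+1))$ (the down-right arrow) and $(m+1,m+1,[q-1))$ (the down-left arrow), while the vertical arrow to $(m+1,m+1,[q))$ is the composite of the two and is therefore redundant. Every arrow deleted in passing to $\mathscr{F}=\{[0),\ldots,[k)\}$ has at least one endpoint in a deleted column, so no arrow between surviving nodes disappears, and conversely all surviving covers are surviving arrows. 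The only place where the cover structure genuinely changes is the last column: the down-right arrow out of $(m,m,[k))$ pointed into the now-absent column $[k+1)$, and one checks that the surviving node $(m+1,m+1,[k))$ is still linked to $(m,m,[k))$ through the surviving path $(m,m,[k))\succ(m+1,m+1,[k-1))\succ(m+1,m+1,[k))$. Thus the transitive closure of the remaining arrows reproduces $\preccurlyeq$ exactly.

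The main obstacle is precisely this boundary bookkeeping at the column $[k)$ — and, in the degenerate case $k=0$, at the single surviving column, where the vertical arrows themselves become the covers. One must be sure that truncating the family does not collapse a length-two relation between two surviving idempotents into a new covering relation that the truncated figure fails to display; the computation of lower covers above, showing that every such relation is still witnessed by a path of surviving arrows within the strip $0\leqslant p\leqslant k$, is what closes this gap. Once this is in place, the statement follows exactly as in Proposition~\ref{proposition-2.4}.
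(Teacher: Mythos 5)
Your proposal is correct, but it is considerably more self-contained than what the paper actually does: the paper offers no written proof of this proposition at all, deriving it ``by the similar way'' from Proposition~\ref{proposition-2.4}, which in turn rests on the standing assumption $[0)\in\mathscr{F}$ (via Proposition~\ref{proposition-2.1}) and an appeal to Lemma~5 of \cite{Gutik-Mykhalenych=2020} for the description of the order on the band. You instead recompute everything from first principles: the product of two idempotents via \eqref{eq-1.2}, the resulting numerical criterion $(i,i,[p))\preccurlyeq(m,m,[q))$ if and only if $i\geqslant m$ and $i+p\geqslant m+q$ (your computation is right, including the silent clipping $m-i+[q)=[\max(0,q+m-i))$ in $\omega$, which does not affect the criterion since $p\geqslant 0$), and the observation that this criterion never mentions the ambient family, so the finite band carries exactly the restriction of the infinite order. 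The strip reformulation $(i,i,[p))\mapsto(i,i+p)$ onto $\{(u,w)\colon u\leqslant w\leqslant u+k\}$ with the reversed coordinatewise order is a nice addition, and your cover analysis is the genuinely new content relative to the paper: you correctly identify the lower covers $(m,m,[q+1))$ and $(m+1,m+1,[q-1))$, note that the vertical arrows are redundant composites for $k\geqslant 1$, handle the boundary column $[k)$, where the lost down-right arrow is replaced by the surviving path through $(m+1,m+1,[k-1))$, and flag the degenerate case $k=0$, where the verticals themselves become covers. Since every interval in this poset is finite, the order is indeed the transitive closure of its covering relation, so your check that all covers survive truncation closes the argument. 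What your route buys is a verifiable proof where the paper has only an assertion; what the paper's route buys is brevity and consistency with its citation-based treatment of Proposition~\ref{proposition-2.4}. The only caveat worth recording is that the statement as printed does not require $[0)\in\mathscr{F}$ explicitly because that normalization was fixed after Proposition~\ref{proposition-2.1}; your proof respects this since your criterion is normalization-independent.
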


The definition of the semigroup operation in $\boldsymbol{B}_{\mathbb{Z}}^{\mathscr{F}}$ implies that in the case when $\mathscr{F}$ is an ${\omega}$-closed family subsets of $\omega$ and $F\in\mathscr{F}$ is a nonempty inductive subset in $\omega$ then the set
\begin{equation*}
  \boldsymbol{B}_{\mathbb{Z}}^{\{F\}}=\left\{(i,j,F)\colon i,j\in\mathbb{Z}\right\}
\end{equation*}
with the induced semigroup operation from $\boldsymbol{B}_{\mathbb{Z}}^{\mathscr{F}}$ is a subsemigroup of $\boldsymbol{B}_{\mathbb{Z}}^{\mathscr{F}}$ which by Proposition 5 from~\cite{Gutik-Pozdniakova=2021} is isomorphic to the extended bicyclic semigroup $\boldsymbol{B}_{\mathbb{Z}}$.

\begin{proposition}\label{proposition-2.5}
Let $\mathscr{F}$ be an arbitrary ${\omega}$-closed family of inductive subsets of $\omega$ and $S$ be a subsemigroup of $\boldsymbol{B}_{\mathbb{Z}}^{\mathscr{F}}$  which is isomorphic to the extended bicyclic semigroup $\boldsymbol{B}_{\mathbb{Z}}$. Then there exists a subset $F\in\mathscr{F}$ such that $S$ is a subsemigroup in $\boldsymbol{B}_{\mathbb{Z}}^{\{F\}}$.
\end{proposition}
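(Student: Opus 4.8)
The plan is to avoid any direct analysis of the (tree-like) natural partial order on the band and instead to argue entirely through Green's relations, using that the extended bicyclic semigroup $\boldsymbol{B}_{\mathbb{Z}}$ is bisimple. Since $S\cong\boldsymbol{B}_{\mathbb{Z}}$, the semigroup $S$ is bisimple as well, i.e. it is a single $\mathscr{D}$-class with respect to its own Green relation $\mathscr{D}^{S}$. The first routine step I would record is that the Green relations of a subsemigroup refine those of the ambient semigroup: if $a,b\in S$ and $a\,\mathscr{R}^{S}\,b$, then the witnessing multipliers belong to $S^{1}\subseteq\big(\boldsymbol{B}_{\mathbb{Z}}^{\mathscr{F}}\big)^{1}$, whence $a\,\mathscr{R}\,b$ in $\boldsymbol{B}_{\mathbb{Z}}^{\mathscr{F}}$; the same holds for $\mathscr{L}$, and therefore $\mathscr{D}^{S}=\mathscr{L}^{S}\circ\mathscr{R}^{S}\subseteq\mathscr{L}\circ\mathscr{R}=\mathscr{D}$ on $S$. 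Consequently all elements of $S$ lie in one single $\mathscr{D}$-class of $\boldsymbol{B}_{\mathbb{Z}}^{\mathscr{F}}$.

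The heart of the argument is then to identify the $\mathscr{D}$-classes of $\boldsymbol{B}_{\mathbb{Z}}^{\mathscr{F}}$, namely to show that $(i_1,j_1,F_1)\,\mathscr{D}\,(i_2,j_2,F_2)$ in $\boldsymbol{B}_{\mathbb{Z}}^{\mathscr{F}}$ if and only if $F_1=F_2$, so that the $\mathscr{D}$-classes are exactly the subsets $\boldsymbol{B}_{\mathbb{Z}}^{\{F\}}$, $F\in\mathscr{F}$. Since $(i,j,F)^{-1}=(j,i,F)$ and hence $(i,j,F)(i,j,F)^{-1}=(i,i,F)$, the $\mathscr{D}$-class of an element is governed by the $\mathscr{D}$-class of the idempotent $(i,i,F)$, and it suffices to treat idempotents. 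For idempotents $(i,i,F)$ and $(j,j,G)$ to be $\mathscr{D}$-related there must exist $h=(p,q,H)$ with $hh^{-1}=(i,i,F)$ and $h^{-1}h=(j,j,G)$; computing $hh^{-1}=(p,p,H)$ and $h^{-1}h=(q,q,H)$ forces $H=F$ and $H=G$, so $F=G$. Conversely, for a fixed nonempty inductive $F$ the subsemigroup $\boldsymbol{B}_{\mathbb{Z}}^{\{F\}}$ is isomorphic to $\boldsymbol{B}_{\mathbb{Z}}$ and is bisimple, so any two of its elements are $\mathscr{D}$-related already inside it, and a fortiori in $\boldsymbol{B}_{\mathbb{Z}}^{\mathscr{F}}$. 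This gives the claimed description (which also agrees with the description of Green's relations in \cite{Gutik-Pozdniakova=2021}). Combining with the previous paragraph, $S$ is contained in a single such class $\boldsymbol{B}_{\mathbb{Z}}^{\{F\}}$, and as $\boldsymbol{B}_{\mathbb{Z}}^{\{F\}}$ is a subsemigroup, $S$ is a subsemigroup of $\boldsymbol{B}_{\mathbb{Z}}^{\{F\}}$, as required.

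I expect the main obstacle to be the idempotent computation confirming that idempotents sitting in different ``columns'' $F_1\neq F_2$ are never $\mathscr{D}$-related; the rest is bookkeeping. It is worth flagging one pitfall that the proof must sidestep: one cannot conclude the result merely from the fact that $E(S)$ is a chain. Indeed, $E\big(\boldsymbol{B}_{\mathbb{Z}}^{\mathscr{F}}\big)$ contains chains order-isomorphic to $(\mathbb{Z},\geqslant)$ that wander across infinitely many columns (for example $(n,n,[0))$ for $n\leqslant 0$ continued by $(n,n,[n{+}1))$ for $n\geqslant 0$ forms such a chain), so linearity of $E(S)$ alone is not enough. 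What actually pins $S$ down to a single $F$ is bisimplicity of $\boldsymbol{B}_{\mathbb{Z}}$: it forces \emph{all} idempotents of $S$ to be mutually $\mathscr{D}$-related, hence to lie in one column, and then forces all of $S$ into that column.
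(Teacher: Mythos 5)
Your proof is correct, and it takes a genuinely different route from the paper's. The paper argues locally and reduces to the $\omega$-case: it notes $\mathfrak{I}(0,0)=(i,i,F)$ is an idempotent lying below some $(k,k,[0))$, maps the bicyclic corner $(0,0)\boldsymbol{B}_{\mathbb{Z}}(0,0)$ into the corner subsemigroup $\boldsymbol{B}_{\mathbb{Z}}^{\mathscr{F}}[k,k,0)$, which by Proposition~\ref{proposition-2.2} is isomorphic to $\boldsymbol{B}_{\omega}^{\mathscr{F}}$, invokes the published $\omega$-analogue (Proposition~4 of \cite{Gutik-Mykhalenych=2021}) to force this corner's image into a single $\boldsymbol{B}_{\mathbb{Z}}^{\{F\}}$, and then exhausts $\boldsymbol{B}_{\mathbb{Z}}=\bigcup\{(n,n)\boldsymbol{B}_{\mathbb{Z}}(n,n)\colon -n\in\omega\}$ by local bicyclic monoids. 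You argue globally through Green's relations: bisimplicity of $\boldsymbol{B}_{\mathbb{Z}}$ puts all of $S$ in one $\mathscr{D}^{S}$-class, the standard refinement $\mathscr{D}^{S}\subseteq\mathscr{D}$ transfers this to the ambient semigroup, and the identification of the $\mathscr{D}$-classes of $\boldsymbol{B}_{\mathbb{Z}}^{\mathscr{F}}$ with the sets $\boldsymbol{B}_{\mathbb{Z}}^{\{F\}}$ finishes the job; that identification is exactly Theorem~4$(iv)$ of \cite{Gutik-Pozdniakova=2021}, which the paper itself cites in the proof of Theorem~\ref{theorem-3.2}, and your direct verification via $hh^{-1}=(p,p,H)$, $h^{-1}h=(q,q,H)$ is a correct re-derivation of it. Two small points are worth making explicit but are easily supplied: since $S$ is abstractly an inverse semigroup and inverses are unique in the inverse semigroup $\boldsymbol{B}_{\mathbb{Z}}^{\mathscr{F}}$, the subsemigroup $S$ is automatically closed under the ambient inversion, so the multiplier-based Green relations you use and the idempotent-based ones the paper defines agree on $S$; and if $\varnothing\in\mathscr{F}$ the zero forms its own $\mathscr{D}$-class, which cannot contain the infinite $S$, so your argument survives even without the paper's blanket nonemptiness assumption. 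What each approach buys: the paper's proof leans on prior results about $\boldsymbol{B}_{\omega}^{\mathscr{F}}$ and fits its normalization $[0)\in\mathscr{F}$, while yours is shorter, avoids both the corner machinery and that normalization, and works verbatim for an arbitrary $\omega$-closed family, since the $\mathscr{D}$-class computation never uses inductiveness (inductiveness enters only in knowing that $\boldsymbol{B}_{\mathbb{Z}}^{\{F\}}$ is itself a subsemigroup, which the conclusion's phrasing presupposes anyway). Your cautionary remark is also well taken: chains of idempotents isomorphic to $(\mathbb{Z},\geqslant)$ do wander across the classes $\boldsymbol{B}_{\mathbb{Z}}^{\{F\}}$, so order-theoretic information about $E(S)$ alone could not yield the result, and it is indeed the $\mathscr{D}$-class rigidity that does the work.
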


\begin{proof}
Suppose that $\mathfrak{I}\colon \boldsymbol{B}_{\mathbb{Z}}\to S\subseteq \boldsymbol{B}_{\mathbb{Z}}^{\mathscr{F}}$ is an isomorphism. Proposition~21(2) of \cite[Section~1.4]{Lawson=1998} implies that the image $\mathfrak{I}(0,0)$ is an idempotent of $\boldsymbol{B}_{\mathbb{Z}}^{\mathscr{F}}$,
and hence by Lemma~1(2) from \cite{Gutik-Pozdniakova=2021}, $\mathfrak{I}(0,0)=(i,i,F)$ for some $i\in\mathbb{Z}$ and $F\in\mathscr{F}$.
By Proposition~2.1$(viii)$ of \cite{Fihel-Gutik=2011} the subset  $(0,0)\boldsymbol{B}_{\mathbb{Z}}(0,0)$ of $\boldsymbol{B}_{\mathbb{Z}}$ is isomorphic to the bicyclic semigroup, and hence the image $\mathfrak{I}\left((0,0)\boldsymbol{B}_{\mathbb{Z}}(0,0)\right)$ is isomorphic to the bicyclic semigroup $\boldsymbol{B}_{\omega}$. Then the definition of the natural partial order on $E(\boldsymbol{B}_{\mathbb{Z}}^{\mathscr{F}})$ and Corollary~1 from \cite{Gutik-Pozdniakova=2021} imply that there exists an integer $k$ such that $(i,i,F)\preccurlyeq(k,k,[0))$. By Proposition~\ref{proposition-2.2} the subsemigroup
\begin{equation*}
  \boldsymbol{B}_{\mathbb{Z}}^{\mathscr{F}}[k,k,0)=(k,k,[0))\cdot \boldsymbol{B}_{\mathbb{Z}}^{\mathscr{F}}\cdot(k,k,[0))
\end{equation*}
of $\boldsymbol{B}_{\mathbb{Z}}^{\mathscr{F}}$ is isomorphic to $\boldsymbol{B}_{\omega}^{\mathscr{F}}$. Since $(i,i,F)\preccurlyeq(k,k,[0))$ we have that  $\mathfrak{I}\left((0,0)\boldsymbol{B}_{\mathbb{Z}}(0,0)\right)\subseteq \boldsymbol{B}_{\mathbb{Z}}^{\mathscr{F}}[k,k,0)$, and hence $\mathfrak{I}\left((0,0)\boldsymbol{B}_{\mathbb{Z}}(0,0)\right)\subseteq \boldsymbol{B}_{\mathbb{Z}}^{\{F\}}$ by Proposition~4 of \cite{Gutik-Mykhalenych=2021}.

Next, fix any negative integer $n$. By Proposition~2.1$(viii)$ of \cite{Fihel-Gutik=2011} the subset  $(n,n)\boldsymbol{B}_{\mathbb{Z}}(n,n)$ of $\boldsymbol{B}_{\mathbb{Z}}$ is isomorphic to the bicyclic semigroup. Since $(0,0)\boldsymbol{B}_{\mathbb{Z}}(0,0)$ is an inverse subsemigroup of $(n,n)\boldsymbol{B}_{\mathbb{Z}}(n,n)$, the above arguments imply that $\mathfrak{I}\left((n,n)\boldsymbol{B}_{\mathbb{Z}}(n,n)\right)\subseteq \boldsymbol{B}_{\mathbb{Z}}^{\{F\}}$ for any negative integer $n$. Since
\begin{equation*}
  \boldsymbol{B}_{\mathbb{Z}}=\bigcup\left\{(k,k)\boldsymbol{B}_{\mathbb{Z}}(k,k)\colon -k\in\omega\right\},
\end{equation*}
we get that $\mathfrak{I}\left(\boldsymbol{B}_{\mathbb{Z}}\right)\subseteq \boldsymbol{B}_{\mathbb{Z}}^{\{F\}}$.
\end{proof}

\section{On authomorphisms of the semigroup $\boldsymbol{B}_{\mathbb{Z}}^{\mathscr{F}}$}\label{section-3}

Recall \cite{Green=1951} define relations $\mathscr{L}$ and $\mathscr{R}$ on an inverse semigroup $S$ by
\begin{equation*}
  (s,t)\in \mathscr{L} \quad \Leftrightarrow \quad s^{-1}s=t^{-1}t \qquad \hbox{and} \qquad (s,t)\in \mathscr{R} \quad \Leftrightarrow \quad ss^{-1}=tt^{-1}.
\end{equation*}
Both $\mathscr{L}$ and $\mathscr{R}$ are equivalence relations on $S$. The relation $\mathscr{D}$ is defined to be the smallest
equivalence relation which contains both $\mathscr{L}$ and $\mathscr{R}$, which is equivalent that $\mathscr{D}=\mathscr{L}\circ\mathscr{R}=\mathscr{R}\circ\mathscr{L}$ \cite{Lawson=1998}.

\begin{remark}\label{remark-3.1}
It is obvious that every semigroup  isomorphism $\mathfrak{i}\colon S\to T$ maps a $\mathscr{D}$-class (resp. $\mathscr{L}$-class, $\mathscr{R}$-class) of $S$ onto a $\mathscr{D}$-class (resp. $\mathscr{L}$-class, $\mathscr{R}$-class) of $T$.
\end{remark}

In this section we assume that $[0)\in \mathscr{F}$ for any ${\omega}$-closed family $\mathscr{F}$ of inductive subsets of $\omega$.

\smallskip

An automorphism $\mathfrak{a}$ of the semigroup $\boldsymbol{B}_{\mathbb{Z}}^{\mathscr{F}}$ is called a \emph{$(0,0,[0))$-automorphism} if $\mathfrak{a}(0,0,[0))=(0,0,[0))$.

\begin{theorem}\label{theorem-3.2}
Let $\mathscr{F}$ be an ${\omega}$-closed family of inductive nonempty subsets of $\omega$. Then every \emph{$(0,0,[0))$-automorphism} of the semigroup $\boldsymbol{B}_{\mathbb{Z}}^{\mathscr{F}}$ is the identity map.
\end{theorem}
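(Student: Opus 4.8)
The plan is to dismantle $\boldsymbol{B}_{\mathbb{Z}}^{\mathscr{F}}$ along its $\mathscr{D}$-classes, beginning with the one that contains the fixed idempotent $(0,0,[0))$, and then to propagate the information to the remaining classes by means of the natural partial order on the band together with the combinatorial ($\mathscr{H}$-trivial) structure of the semigroup.

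First I would record that, by Remark~\ref{remark-1.1} and Lemma~\ref{lemma-2.3}, the assumption $[0)\in\mathscr{F}$ forces $\mathscr{F}=\{[0),[1),[2),\dots\}$ or $\mathscr{F}=\{[0),\dots,[k)\}$, so that every element has the form $(i,j,[p))$. A direct computation with the inverse $(i,j,F)^{-1}=(j,i,F)$ gives $(i,j,[p))\mathrel{\mathscr{R}}(i',j',[q))$ iff $i=i'$ and $p=q$, and $(i,j,[p))\mathrel{\mathscr{L}}(i',j',[q))$ iff $j=j'$ and $p=q$; hence each $\mathscr{D}$-class is exactly one of the subsemigroups $\boldsymbol{B}_{\mathbb{Z}}^{\{[p)\}}\cong\boldsymbol{B}_{\mathbb{Z}}$. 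Since $\mathfrak{a}(0,0,[0))=(0,0,[0))$, Remark~\ref{remark-3.1} shows that $\mathfrak{a}$ maps the $\mathscr{D}$-class $\boldsymbol{B}_{\mathbb{Z}}^{\{[0)\}}$ onto itself, so $\mathfrak{a}$ restricts to an automorphism of $\boldsymbol{B}_{\mathbb{Z}}^{\{[0)\}}\cong\boldsymbol{B}_{\mathbb{Z}}$ fixing the idempotent corresponding to $(0,0)$. As $\mathbf{Aut}(\boldsymbol{B}_{\mathbb{Z}})\cong\mathbb{Z}(+)$ consists of the shifts $(i,j)\mapsto(i+c,j+c)$, the only such automorphism is the identity, whence $\mathfrak{a}(i,j,[0))=(i,j,[0))$ for all $i,j\in\mathbb{Z}$. (Alternatively, Proposition~\ref{proposition-2.5} shows directly that $\mathfrak{a}$ permutes the subsemigroups $\boldsymbol{B}_{\mathbb{Z}}^{\{[p)\}}$.)

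Next I would compute the natural partial order on the band: $(i,i,[p))\preccurlyeq(k,k,[q))$ iff $i\ge k$ and $i+p\ge k+q$. In the coordinates $(\alpha,\delta)=(i,\,i+p)$, where $\delta\ge\alpha$ encodes $p=\delta-\alpha\ge0$, this is the reverse product order, so the lower covers of an idempotent are its two grid-neighbours obtained by raising $\alpha$ or $\delta$ by one, as far as they stay in the region $\delta\ge\alpha$. Translating back, $(b,b,[t))$ is covered from below by $(b,b,[t+1))$ always and by $(b+1,b+1,[t-1))$ only when $t\ge1$; in particular each boundary idempotent $(b,b,[0))$ has the single lower cover $(b,b,[1))$. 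I would then prove by induction on $p$ that $\mathfrak{a}(b,b,[p))=(b,b,[p))$ for all $b\in\mathbb{Z}$. The case $p=0$ is the previous paragraph. For the step, apply $\mathfrak{a}$ to the fixed idempotent $(b,b,[p-1))$: since $\mathfrak{a}$ induces an order-isomorphism of the band, it permutes the lower covers of $(b,b,[p-1))$; for $p=1$ this cover is unique, while for $p\ge2$ the two lower covers are $(b+1,b+1,[p-2))$ and $(b,b,[p))$, the first of which is fixed by the induction hypothesis, so in either case $\mathfrak{a}$ fixes $(b,b,[p))$. (In the finite case $\mathscr{F}=\{[0),\dots,[k)\}$ the same induction runs for $0\le p\le k$.)

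Having fixed every idempotent, I would conclude using $\mathscr{H}$-triviality. For arbitrary $s=(i,j,[p))$ one has $ss^{-1}=(i,i,[p))$ and $s^{-1}s=(j,j,[p))$; as automorphisms preserve inversion, $\mathfrak{a}(s)\mathfrak{a}(s)^{-1}=\mathfrak{a}(ss^{-1})=(i,i,[p))$ and $\mathfrak{a}(s)^{-1}\mathfrak{a}(s)=(j,j,[p))$, so $\mathfrak{a}(s)$ lies in the same $\mathscr{R}$- and $\mathscr{L}$-classes as $s$, hence in its $\mathscr{H}$-class, which is the singleton $\{s\}$ because $\boldsymbol{B}_{\mathbb{Z}}^{\mathscr{F}}$ is combinatorial; thus $\mathfrak{a}(s)=s$. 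I expect the main obstacle to be the middle step, namely getting the natural partial order and, above all, the precise lower-cover structure correct, since the whole induction hinges on each relevant idempotent having at most two lower covers with one of them already known to be fixed; the reduction to $\boldsymbol{B}_{\mathbb{Z}}$ on the base $\mathscr{D}$-class and the final $\mathscr{H}$-trivial step are then routine.
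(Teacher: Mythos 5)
Your proposal is correct, and while it shares the paper's overall skeleton (fix the $[0)$-level, propagate to all idempotents via the order rigidity of the band, then kill the off-diagonal freedom using inversion), each of your three stages is executed by a genuinely different mechanism. On the base $\mathscr{D}$-class you import from \cite{Gutik-Maksymyk-2017} the fact that the automorphisms of $\boldsymbol{B}_{\mathbb{Z}}$ are exactly the shifts $(i,j)\mapsto(i+c,j+c)$, so that an automorphism fixing an idempotent is the identity; the paper instead reproves this in situ, observing that the idempotents $(i,i,[0))$ form a chain order-isomorphic to $\mathbb{Z}$ which $\mathfrak{a}$ fixes pointwise, and then pinning down $\mathfrak{a}(k,l,[0))=(k,l,[0))$ by computing $\mathfrak{a}(k,l,[0))\cdot\mathfrak{a}(l,k,[0))=(k,k,[0))$. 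For the upward propagation, the paper fixes only the idempotents $(0,0,[p))$, using that $(0,0,[1))$ is the \emph{unique} idempotent strictly between $(1,1,[0))$ and $(0,0,[0))$ (Corollary~5 of \cite{Gutik-Pozdniakova=2021}), and then re-runs the entire $\mathscr{D}$-class argument at each level $[p)$; you instead fix \emph{all} diagonal idempotents $(b,b,[p))$ at once by a lower-cover analysis, and your covering computations check out against the order criterion $(i,i,[p))\preccurlyeq(k,k,[q))$ iff $i\geqslant k$ and $i+p\geqslant k+q$: every idempotent strictly below $(b,b,[0))$ satisfies $i\geqslant b$ and $i+q\geqslant b+1$, so $(b,b,[1))$ is indeed the unique lower cover, and for $p-1\geqslant 1$ the two lower covers of $(b,b,[p-1))$ are exactly $(b,b,[p))$ and $(b+1,b+1,[p-2))$, the latter fixed by your induction hypothesis; this also survives the finite case $\mathscr{F}=\{[0),\ldots,[k)\}$ for $p\leqslant k$, where the theorem must also hold (note that the identification of $\mathscr{F}$ as $\{[0),[1),\ldots\}$ or $\{[0),\ldots,[k)\}$ needs the standing assumption $[0)\in\mathscr{F}$ of Section~\ref{section-3} together with Lemma~\ref{lemma-2.3}, which you invoked). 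Finally, your one-shot finish via combinatoriality --- $\mathfrak{a}(s)\mathfrak{a}(s)^{-1}=ss^{-1}$ and $\mathfrak{a}(s)^{-1}\mathfrak{a}(s)=s^{-1}s$ force $\mathfrak{a}(s)$ into the singleton $\mathscr{H}$-class of $s$ --- is legitimate since the paper records that $\boldsymbol{B}_{\mathbb{Z}}^{\mathscr{F}}$ is combinatorial, and it is arguably cleaner than the paper's per-level repetition of the product computation. What each approach buys: yours is more uniform and self-contained order-theoretically (explicit covers in the band instead of the cited interval-uniqueness, a single $\mathscr{H}$-triviality step instead of an induction that repeats the $\mathscr{D}$-class argument), at the cost of leaning on the external classification of $\mathbf{Aut}(\boldsymbol{B}_{\mathbb{Z}})$, which the paper's version avoids by proving the base case from scratch.
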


\begin{proof}
Let $\mathfrak{a}\colon \boldsymbol{B}_{\mathbb{Z}}^{\mathscr{F}}\to \boldsymbol{B}_{\mathbb{Z}}^{\mathscr{F}}$ be an arbitrary $(0,0,[0))$-automorphism.

By Theorem~4$(iv)$ of \cite{Gutik-Pozdniakova=2021} the elements $(i_1,j_1,F_1)$ and $(i_2,j_2,F_2)$ of $\boldsymbol{B}_{\mathbb{Z}}^{\mathscr{F}}$ are $\mathscr{D}$-equivalent if and only if $F_1=F_2$. Since every automorphism preserves $\mathscr{D}$-classes, the above argument implies that $\mathfrak{a}(\boldsymbol{B}_{\mathbb{Z}}^{\{F_1\}})=\boldsymbol{B}_{\mathbb{Z}}^{\{F_2\}}$ if and only if $F_1=F_2$ for  $F_1,F_2\in\mathscr{F}$. Hence we have that $\mathfrak{a}(\boldsymbol{B}_{\mathbb{Z}}^{\{[0)\}})=\boldsymbol{B}_{\mathbb{Z}}^{\{[0)\}}$. By Proposition~21(6) of \cite[Section~1.4]{Lawson=1998} every  automorphism preserves the natural partial order on the semilattice $E(\boldsymbol{B}_{\mathbb{Z}}^{\mathscr{F}})$ and since $\mathfrak{a}$ is a $(0,0,[0))$-automorphism of $\boldsymbol{B}_{\mathbb{Z}}^{\mathscr{F}}$ we get that $\mathfrak{a}(i,i,[0))=(i,i,[0))$ for any integer $i$.

Fix arbitrary $k,l\in\mathbb{Z}$. Suppose that $\mathfrak{a}(k,l,[0))=(p,q,[0))$ for some integers $p$ and $q$. Since the semigroup $\boldsymbol{B}_{\mathbb{Z}}^{\mathscr{F}}$ is inverse, Proposition~21(1) of \cite[Section~1.4]{Lawson=1998} and Lemma~1(4) of \cite{Gutik-Pozdniakova=2021} imply that
\begin{equation*}
  \big(\mathfrak{a}(k,l,[0))\big)^{-1}=(p,q,[0))^{-1}=(q,p,[0)).
\end{equation*}
Again by Proposition~21(1) of \cite[Section~1.4]{Lawson=1998} we have that
\begin{align*}
  (k,k,[0)) &=\mathfrak{a}(k,k,[0))=\\
  &=\mathfrak{a}((k,l,[0))\cdot(l,k,[0)))=\\
  &=\mathfrak{a}(k,l,[0))\cdot\mathfrak{a}(l,k,[0))=\\
  &=\mathfrak{a}(k,l,[0))\cdot\mathfrak{a}\big((k,l,[0))^{-1}\big)=\\
  &=(p,q,[0))\cdot (q,p,[0))=\\
  &=(p,p,[0)),
\end{align*}
and hence $p=k$. By similar way we get that $l=q$. Therefore, $\mathfrak{a}(k,l,[0))=(k,l,[0))$ for any integers $k$ and $l$.

If $\mathscr{F}\neq\{[0)\}$ then by Lemma~\ref{lemma-2.3}, $[1)\in\mathscr{F}$. The definition of the natural partial order on the semilattice $E(\boldsymbol{B}_{\mathbb{Z}}^{\mathscr{F}})$ (also, see Proposition~\ref{proposition-2.4}) and Corollary~5 of \cite{Gutik-Pozdniakova=2021} imply that  $(0,0,[1))$ is the unique idempotent $\boldsymbol{\varepsilon}$ of the semigroup $\boldsymbol{B}_{\mathbb{Z}}^{\mathscr{F}}$ with the property
\begin{equation*}
  (1,1,[0))\preccurlyeq \boldsymbol{\varepsilon}\preccurlyeq(0,0,[0)).
\end{equation*}
Since by Proposition~21(6) of \cite[Section~1.4]{Lawson=1998} the automorphism $\mathfrak{a}$ preserves the natural partial order on the semilattice $E(\boldsymbol{B}_{\mathbb{Z}}^{\mathscr{F}})$, we get that  $\mathfrak{a}(0,0,[1))=(0,0,[1))$. Similar arguments as in the above paragraph imply that $\mathfrak{a}(k,l,[1))=(k,l,[1))$ for any integers $k$ and $l$.

Next, by induction we obtain that $\mathfrak{a}(k,l,[p))=(k,l,[p))$ for any $k,l\in\mathbb{Z}$ and $[p)\in\mathscr{F}$.
\end{proof}

\begin{proposition}\label{proposition-3.3}
Let $\mathscr{F}$ be an ${\omega}$-closed family of inductive nonempty subsets of $\omega$. Then  for every integer $k$ the map $\mathfrak{h}_k\colon \boldsymbol{B}_{\mathbb{Z}}^{\mathscr{F}}\to \boldsymbol{B}_{\mathbb{Z}}^{\mathscr{F}}$, $(i,j,[p))\mapsto (i+k,j+k,[p))$
is an automorphism of the semigroup $\boldsymbol{B}_{\mathbb{Z}}^{\mathscr{F}}$.
\end{proposition}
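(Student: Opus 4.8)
The plan is to verify the three requirements for $\mathfrak{h}_k$ to be an automorphism: that it is a well-defined self-map of $\boldsymbol{B}_{\mathbb{Z}}^{\mathscr{F}}$, that it is bijective, and that it is a homomorphism. Since $\mathscr{F}$ consists of inductive \emph{nonempty} subsets, we have $\varnothing\notin\mathscr{F}$, so $\boldsymbol{B}_{\mathbb{Z}}^{\mathscr{F}}=(\boldsymbol{B}_{\mathbb{Z}}\times\mathscr{F},\cdot)$ and, by Remark~\ref{remark-1.1}\eqref{remark-1.1(2)}, every element has the form $(i,j,[p))$ with $i,j\in\mathbb{Z}$ and $[p)\in\mathscr{F}$. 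Because $\mathfrak{h}_k$ fixes the third coordinate and only shifts the first two integer coordinates by $k$, it carries $(i,j,[p))$ to $(i+k,j+k,[p))\in\boldsymbol{B}_{\mathbb{Z}}^{\mathscr{F}}$; hence $\mathfrak{h}_k$ is a well-defined map into $\boldsymbol{B}_{\mathbb{Z}}^{\mathscr{F}}$. Bijectivity is immediate: the compositions $\mathfrak{h}_k\circ\mathfrak{h}_{-k}$ and $\mathfrak{h}_{-k}\circ\mathfrak{h}_k$ both send $(i,j,[p))$ to itself, so $\mathfrak{h}_{-k}$ is a two-sided inverse of $\mathfrak{h}_k$.

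The substantive step is to check that $\mathfrak{h}_k$ preserves the product defined by~\eqref{eq-1.2}. The key observation, on which everything turns, is that the simultaneous shift of both factors by $k$ leaves invariant the data that govern the formula: the branch condition satisfies
\begin{equation*}
  j_1\lessgtr i_2 \quad\Longleftrightarrow\quad j_1+k\lessgtr i_2+k,
\end{equation*}
and all the differences appearing in~\eqref{eq-1.2}, namely $j_1-i_2$ and $i_2-j_1$, are unchanged, so the third coordinate of the product is unaffected while the first two coordinates simply acquire the additive constant $k$. Thus I would split the verification into the three cases $j_1<i_2$, $j_1=i_2$, $j_1>i_2$ and compute $\mathfrak{h}_k\big((i_1,j_1,F_1)\cdot(i_2,j_2,F_2)\big)$ and $\mathfrak{h}_k(i_1,j_1,F_1)\cdot\mathfrak{h}_k(i_2,j_2,F_2)$ separately, confirming that both yield $(i_1-j_1+i_2+k,\,j_2+k,\,(j_1-i_2+F_1)\cap F_2)$ in the first case, $(i_1+k,\,j_2+k,\,F_1\cap F_2)$ in the second, and $(i_1+k,\,j_1-i_2+j_2+k,\,F_1\cap(i_2-j_1+F_2))$ in the third.

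I expect no genuine obstacle here: the computation is routine and is in fact formally identical to the one already carried out for the map $\mathfrak{I}$ in the proof of Proposition~\ref{proposition-2.2}, the only difference being that the shift is applied to both coordinates of both factors rather than as an embedding. The single point requiring care is precisely the branch-invariance displayed above, which guarantees that the piecewise formula selects matching branches on the two sides of the homomorphism identity in every case. Once the three cases agree, $\mathfrak{h}_k$ is a homomorphism, and combined with the bijectivity established above it is an automorphism of $\boldsymbol{B}_{\mathbb{Z}}^{\mathscr{F}}$.
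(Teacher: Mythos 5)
Your proposal is correct and follows essentially the same route as the paper, which simply remarks that the proof of Proposition~\ref{proposition-3.3} is similar to that of Proposition~\ref{proposition-2.2}: the three-case verification you outline, resting on the translation-invariance of the branch condition $j_1\lessgtr i_2$ and of the differences $j_1-i_2$, $i_2-j_1$, is exactly the computation carried out there, and your observation that $\mathfrak{h}_{-k}$ is a two-sided inverse settles bijectivity. Nothing further is needed.
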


The proof of Proposition~\ref{proposition-3.3} is similar to Proposition~\ref{proposition-2.2}.

\smallskip

For a partially ordered set $(P, \leqq)$, a subset $X$ of $P$ is called \emph{order-convex}, if $x\leqq z\leqq y$ and ${x, y}\subset X$ implies that $z\in X$, for all $x, y, z\in P$ \cite{Harzheim=2005}.

\begin{lemma}\label{lemma-3.4}
If $\mathscr{F}$ is an infinite ${\omega}$-closed family of inductive nonempty subsets of $\omega$ then
\begin{equation*}
\left\{(0,0,[k))\colon k\in\omega\right\}
\end{equation*}
is an order-convex linearly ordered subset of $(E(\boldsymbol{B}_{\mathbb{Z}}^{\mathscr{F}}),\preccurlyeq)$.
\end{lemma}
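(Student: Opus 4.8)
The plan is to reduce everything to an explicit arithmetic criterion for the natural partial order on the idempotents of $\boldsymbol{B}_{\mathbb{Z}}^{\mathscr{F}}$ of the form $(i,i,[p))$. First I would record that, since $\mathscr{F}$ is infinite and contains $[0)$, Lemma~\ref{lemma-2.3} forces $[k)\in\mathscr{F}$ for every $k\in\omega$, so the displayed set really is a subset of $E(\boldsymbol{B}_{\mathbb{Z}}^{\mathscr{F}})$. I would also recall that every idempotent of $\boldsymbol{B}_{\mathbb{Z}}^{\mathscr{F}}$ has the form $(i,i,F)$ with $F\in\mathscr{F}$ (Lemma~1(2) of \cite{Gutik-Pozdniakova=2021}), and that $E(\boldsymbol{B}_{\mathbb{Z}}^{\mathscr{F}})$ is a semilattice, so that $e\preccurlyeq f$ is equivalent to the single equation $e\cdot f=e$.

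The computational heart is to evaluate $(i,i,[m))\cdot(j,j,[n))$ using \eqref{eq-1.2}. Splitting into the cases $i<j$, $i=j$, $i>j$ and simplifying the shifted intersections $(i-j+[m))\cap[n)$ and $[m)\cap(j-i+[n))$ via the identity $[a)\cap[b)=[\max\{a,b\})$, I expect the product to equal $(i,i,[m))$ precisely when $i\geqslant j$ and $i-j\geqslant n-m$. In the case $i<j$ the first coordinate of the product becomes $j\neq i$, which already rules out the relation; in the remaining cases the comparison of minima produces exactly the inequality $i-j\geqslant n-m$. Thus I would obtain the criterion that $(i,i,[m))\preccurlyeq(j,j,[n))$ if and only if $i\geqslant j$ and $i-j\geqslant n-m$. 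This is the same order one reads off from Fig.~\ref{fig-2.1} (Proposition~\ref{proposition-2.4}), which could be cited instead, but the direct computation is cleaner.

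With this criterion in hand both conclusions are immediate. Specializing to $i=j=0$ gives $(0,0,[m))\preccurlyeq(0,0,[n))$ if and only if $n\leqslant m$, so the set is linearly ordered (a chain isomorphic to $\omega$ with the reverse order). For order-convexity, suppose $(0,0,[k))\preccurlyeq(i,i,[m))\preccurlyeq(0,0,[l))$ for an arbitrary idempotent $(i,i,[m))$ of $\boldsymbol{B}_{\mathbb{Z}}^{\mathscr{F}}$. The first-coordinate parts of the two relations read $0\geqslant i$ and $i\geqslant 0$, which together force $i=0$; hence $(i,i,[m))=(0,0,[m))$ belongs to the displayed set, and order-convexity follows.

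The only real obstacle is the bookkeeping in the middle step: one must track the integer shifts $i-j$ and $j-i$ correctly, keep in mind that these shifts may translate an inductive set below $0$, and then use that intersecting with a subset of $\omega$ amounts to taking the larger of the two minima. Once the order criterion is established correctly, both the linearity and the convexity are purely formal consequences of its first-coordinate constraint.
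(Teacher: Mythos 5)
Your proof is correct, and it is in fact tighter than the paper's own. Both arguments ultimately rest on the same arithmetic description of $\preccurlyeq$ on the idempotents --- $(i,i,[m))\preccurlyeq(j,j,[n))$ if and only if $i\geqslant j$ and $i-j\geqslant n-m$ --- but you derive it directly from the multiplication formula \eqref{eq-1.2} (using commutativity of idempotents, so that $e\preccurlyeq f$ reduces to the single equation $e\cdot f=e$), whereas the paper imports it as Corollary~1 of \cite{Gutik-Pozdniakova=2021}; your case analysis and the identity $[a)\cap[b)=[\max\{a,b\})$ are carried out correctly. The more substantive difference concerns convexity: the paper's printed proof fixes all three idempotents to be of the form $(0,0,[\cdot))$, i.e.\ it only verifies the comparability condition \emph{inside} the displayed set, while the middle element in the definition of order-convexity must range over all of $E(\boldsymbol{B}_{\mathbb{Z}}^{\mathscr{F}})$; your argument supplies exactly this missing step, pinning the first coordinate of an arbitrary intermediate idempotent $(i,i,[m))$ to $i=0$ via the two inequalities $0\geqslant i$ and $i\geqslant 0$, after which membership in the set is automatic. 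Your preliminary observation that infiniteness of $\mathscr{F}$, the standing assumption $[0)\in\mathscr{F}$, and Lemma~\ref{lemma-2.3} together force $[k)\in\mathscr{F}$ for every $k\in\omega$ (so the displayed set really lies in $E(\boldsymbol{B}_{\mathbb{Z}}^{\mathscr{F}})$) is also correct and is taken for granted silently in the paper. In short: same underlying order criterion, but your version is self-contained and actually proves order-convexity as defined, which the paper's three-line proof does not literally do --- a genuine improvement rather than a deviation.
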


\begin{proof}
Fix arbitrary $(0,0,[m)), (0,0,[n)), (0,0,[p))\in E(\boldsymbol{B}_{\mathbb{Z}}^{\mathscr{F}})$. If  $(0,0,[m))\preccurlyeq(0,0,[n))\preccurlyeq (0,0,[p))$ then Corollary~1 of \cite{Gutik-Pozdniakova=2021} implies that $[m)\subseteq [n)\subseteq [p)$. Hence we have that $m\geqslant n\geqslant p$, which implies the statement of the lemma.
\end{proof}

\begin{proposition}\label{proposition-3.5}
Let $\mathscr{F}$ be an infinite ${\omega}$-closed family of inductive nonempty subsets of $\omega$. Then
\begin{equation*}
  \mathfrak{a}(0,0,[0))\in \boldsymbol{B}_{\mathbb{Z}}^{\{[0)\}}
\end{equation*}
for any automorphism $\mathfrak{a}$ of the semigroup $\boldsymbol{B}_{\mathbb{Z}}^{\mathscr{F}}$.
\end{proposition}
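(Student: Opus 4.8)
The plan is to reduce everything to showing that the $F$-component of $\mathfrak{a}(0,0,[0))$ equals $[0)$. Since $(0,0,[0))$ is an idempotent and $\mathfrak{a}$ is an automorphism, $\mathfrak{a}(0,0,[0))$ is again an idempotent, so by Lemma~1(2) of \cite{Gutik-Pozdniakova=2021} it has the form $(i,i,F)$ for some $i\in\mathbb{Z}$ and $F\in\mathscr{F}$. Because $\mathscr{F}$ is infinite and $[0)\in\mathscr{F}$, Lemma~\ref{lemma-2.3} forces $\mathscr{F}=\{[n)\colon n\in\omega\}$, and it remains to prove that $F=[0)$, that is, that $\mathfrak{a}(0,0,[0))$ lies in the set $\{(i,i,[0))\colon i\in\mathbb{Z}\}=E(\boldsymbol{B}_{\mathbb{Z}}^{\mathscr{F}})\cap\boldsymbol{B}_{\mathbb{Z}}^{\{[0)\}}$.

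The idea is to isolate a purely order-theoretic property of $(E(\boldsymbol{B}_{\mathbb{Z}}^{\mathscr{F}}),\preccurlyeq)$ that singles out the idempotents with $F$-component $[0)$ and that must be preserved by $\mathfrak{a}$. First I would record the natural partial order on the band explicitly: a direct computation with \eqref{eq-1.2} (equivalently, Corollary~1 of \cite{Gutik-Pozdniakova=2021}, used as in the proof of Lemma~\ref{lemma-3.4}) shows that $(i,i,[p))\preccurlyeq(j,j,[q))$ if and only if $i\geqslant j$ and $i+p\geqslant j+q$. Encoding the idempotent $(i,i,[p))$ by the pair $(i,i+p)\in\mathbb{Z}^2$, which satisfies $i\leqslant i+p$, this says that $\preccurlyeq$ is the reverse of the coordinatewise order on the half-plane $\{(u,v)\colon u\leqslant v\}$, and the column $\{(i,i,[0))\colon i\in\mathbb{Z}\}$ corresponds exactly to the diagonal $\{u=v\}$.

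The key computation is then a local count of covers. I would show that the lower covers of $(i,i,[p))$ in $(E(\boldsymbol{B}_{\mathbb{Z}}^{\mathscr{F}}),\preccurlyeq)$ are precisely $(i,i,[p+1))$ and, provided $p\geqslant1$, $(i+1,i+1,[p-1))$; symmetrically its upper covers are $(i-1,i-1,[p+1))$ and, provided $p\geqslant1$, $(i,i,[p-1))$. In the coordinate picture these are exactly the single coordinatewise steps $(u,v)\mapsto(u,v+1),(u+1,v)$ and $(u,v)\mapsto(u,v-1),(u-1,v)$ that remain inside the half-plane, and the constraint $u\leqslant v$ deletes the second option precisely on the diagonal. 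Consequently an idempotent has a unique lower cover (equivalently, a unique upper cover) if and only if $p=0$, whereas every idempotent with $F$-component $[n)$, $n\geqslant1$, has two lower and two upper covers; here one uses that every $[m)$ belongs to $\mathscr{F}$, so that in particular there is no ``largest'' $\mathscr{D}$-class. The only delicate point, and the main obstacle, is to carry out this cover computation carefully from \eqref{eq-1.2}, verifying that no further idempotent lies strictly between the listed pairs; but this is a finite case analysis once the order formula above is established.

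Finally I would invoke invariance. By Proposition~21(6) of \cite[Section~1.4]{Lawson=1998} the automorphism $\mathfrak{a}$ restricts to an order isomorphism of the semilattice $(E(\boldsymbol{B}_{\mathbb{Z}}^{\mathscr{F}}),\preccurlyeq)$, and any order isomorphism preserves the covering relation, hence the number of lower covers of each element. Therefore $\mathfrak{a}$ maps the set of idempotents having a unique lower cover onto itself, and by the previous paragraph this set is exactly $\{(i,i,[0))\colon i\in\mathbb{Z}\}$. Since $(0,0,[0))$ belongs to this set, so does its image, giving $\mathfrak{a}(0,0,[0))=(i,i,[0))\in\boldsymbol{B}_{\mathbb{Z}}^{\{[0)\}}$, as required.
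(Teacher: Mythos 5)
Your proof is correct, and it takes a genuinely different route from the paper's. The paper argues by contradiction: assuming $\mathfrak{a}(0,0,[0))=(i,i,[p))$ with $p\geqslant 1$, it uses preservation of $\mathscr{D}$-classes and of the natural partial order to locate images $(m,m,[0))$ and $(t,t,[x))$, $x>p$, of two elements of the chain $\left\{(0,0,[k))\colon k\in\omega\right\}$, derives the inequalities $m\geqslant i+p$, $t\geqslant m$, $x\geqslant p+1$, and then exhibits the idempotent $(i+1,i+1,[p))$ strictly sandwiched between those two images but outside the image of the chain; this contradicts the order-convexity of the chain (Lemma~\ref{lemma-3.4}) combined with the fact that $\mathfrak{a}|_{E(\boldsymbol{B}_{\mathbb{Z}}^{\mathscr{F}})}$ is an order automorphism. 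You instead characterize the idempotents of $\boldsymbol{B}_{\mathbb{Z}}^{\{[0)\}}$ by a local, purely order-theoretic invariant: your order formula $(i,i,[p))\preccurlyeq(j,j,[q))$ iff $i\geqslant j$ and $i+p\geqslant j+q$ is correct (it is exactly what Corollary~1 of \cite{Gutik-Pozdniakova=2021} yields, as used in the paper's own computations), your cover analysis in the half-plane picture is sound, and cover counts are preserved by any order isomorphism, so Proposition~21(6) of \cite[Section~1.4]{Lawson=1998} finishes the argument. Both proofs rest on the same inputs (idempotents have the form $(i,i,[p))$; Lemma~\ref{lemma-2.3} plus infiniteness giving $\mathscr{F}=\{[n)\colon n\in\omega\}$; Lawson's Proposition~21(6)), but yours replaces the paper's global convexity argument by a local one. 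A notable advantage of your route is that it shows exactly where infiniteness of $\mathscr{F}$ enters: if $\mathscr{F}=\{[0),\ldots,[k)\}$, then the idempotents $(i,i,[k))$ also acquire unique lower and upper covers (the step increasing the $F$-index is deleted at the top $\mathscr{D}$-class), which is precisely why the extra automorphism $\widetilde{\mathfrak{a}}$ of Example~\ref{example-3.8} can swap the $[0)$- and $[k)$-columns; the paper's convexity technique, on the other hand, is the one it reuses almost verbatim for the finite case in Proposition~\ref{proposition-3.10}, so each approach has a natural continuation.
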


\begin{proof}
Suppose to the contrary that there exists an automorphism $\mathfrak{a}$ of the semigroup $\boldsymbol{B}_{\mathbb{Z}}^{\mathscr{F}}$ such that $\mathfrak{a}(0,0,[0))\notin \boldsymbol{B}_{\mathbb{Z}}^{\{[0)\}}$. Then $\mathfrak{a}(0,0,[0))$ is an idempotent of the semigroup $\boldsymbol{B}_{\mathbb{Z}}^{\mathscr{F}}$.  Lemma~1(2) of \cite{Gutik-Pozdniakova=2021} implies that $\mathfrak{a}(0,0,[0))=(i,i,[p))$ for some integer $i$ and some positive integer $p$. Since the automorphism $\mathfrak{a}$ maps a $\mathscr{D}$-class of the semigroup $\boldsymbol{B}_{\mathbb{Z}}^{\mathscr{F}}$ onto its $\mathscr{D}$-class there exists an element $(0,0,[s))$ of the chain
\begin{equation}\label{eq-3.1}
  \cdots\preccurlyeq (0,0,[k))\preccurlyeq (0,0,[k-1))\preccurlyeq \cdots\preccurlyeq (0,0,[2))\preccurlyeq (0,0,[1))\preccurlyeq (0,0,[0))
\end{equation}
such that $\mathfrak{a}(0,0,[s))=(m,m,[0))\in \boldsymbol{B}_{\mathbb{Z}}^{\{[0)\}}$ for some integer $m$. By Proposition~21(6) of \cite[Section~1.4]{Lawson=1998} every  automorphism preserves the natural partial order on the semilattice $E(\boldsymbol{B}_{\mathbb{Z}}^{\mathscr{F}})$, and hence the inequality $(0,0,[s))\preccurlyeq(0,0,[0))$ implies that
\begin{equation*}
\mathfrak{a}(0,0,[s))=(m,m,[0))\preccurlyeq(i,i,[p))=\mathfrak{a}(0,0,[0)).
\end{equation*}
By Corollary~1 of \cite{Gutik-Pozdniakova=2021} we have that $m\geqslant i$ and $[0)\subseteq i-m+[p)$. The last inclusion implies that $m\geqslant i+p$. Since the chain \eqref{eq-3.1} is infinite and any its two distinct elements belong to distinct two $\mathscr{D}$-classes of the semigroup $\boldsymbol{B}_{\mathbb{Z}}^{\mathscr{F}}$, Proposition~21(6) of \cite[Section~1.4]{Lawson=1998} and Remark~\ref{remark-3.1} imply that there exists a positive integer $q>s$ such that $\mathfrak{a}(0,0,[q))=(t,t,[x))$ for some positive integer $x>p$ and some integer $t$. Then
\begin{equation*}
\mathfrak{a}(0,0,[q))=(t,t,[x))\preccurlyeq (m,m,[0))=\mathfrak{a}(0,0,[s))
\end{equation*}
and by Corollary~1 of \cite{Gutik-Pozdniakova=2021} we have that $t\geqslant m$ and $[x)\subseteq t-m+[0)$, and hence $x\geqslant t-m$.

Next we consider the idempotent $(i+1,i+1,[p))$ of the semigroup $\boldsymbol{B}_{\mathbb{Z}}^{\mathscr{F}}$. By Corollary~1 of \cite{Gutik-Pozdniakova=2021} we get that $(i+1,i+1,[p))\preccurlyeq(i,i,[p))$ in $E(\boldsymbol{B}_{\mathbb{Z}}^{\mathscr{F}})$. Since $x>p$ we have that $x\geqslant p+1$. The inequalities $t\geqslant m\geqslant i+p$ and $p\geqslant 1$ imply that $t\geqslant i+1$. Also, the inequalities $t\geqslant m\geqslant i$ and $x\geqslant p+1$ imply that $t+x\geqslant i+1+p$, and hence we obtain the inclusion $[x)\subseteq i+1-t+[p)$. By Corollary~1 of \cite{Gutik-Pozdniakova=2021} we have that $(t,t,[x))\preccurlyeq (i+1,i+1,[p))$. Since $\mathfrak{a}$ is an automorphism of the semigroup $\boldsymbol{B}_{\mathbb{Z}}^{\mathscr{F}}$, its restriction $\mathfrak{a}|_{E(\boldsymbol{B}_{\mathbb{Z}}^{\mathscr{F}})}\colon E(\boldsymbol{B}_{\mathbb{Z}}^{\mathscr{F}})\to E(\boldsymbol{B}_{\mathbb{Z}}^{\mathscr{F}})$ onto the band $E(\boldsymbol{B}_{\mathbb{Z}}^{\mathscr{F}})$ is an order automorphism of the partially ordered set $(E(\boldsymbol{B}_{\mathbb{Z}}^{\mathscr{F}}),\preccurlyeq)$, and hence the map $\mathfrak{a}|_{E(\boldsymbol{B}_{\mathbb{Z}}^{\mathscr{F}})}$ preserves order-convex subsets of $(E(\boldsymbol{B}_{\mathbb{Z}}^{\mathscr{F}}),\preccurlyeq)$. By Lemma~\ref{lemma-3.4}  chain \eqref{eq-3.1} is order-convex in the partially ordered set $(E(\boldsymbol{B}_{\mathbb{Z}}^{\mathscr{F}}),\preccurlyeq)$. The inequalities $(t,t,[x))\preccurlyeq (i+1,i+1,[p))\preccurlyeq(i,i,[p))$ in $E(\boldsymbol{B}_{\mathbb{Z}}^{\mathscr{F}})$ imply that the image of order-convex chain \eqref{eq-3.1} under the order automorphism $\mathfrak{a}|_{E(\boldsymbol{B}_{\mathbb{Z}}^{\mathscr{F}})}$  is not an order-convex subset of $(E(\boldsymbol{B}_{\mathbb{Z}}^{\mathscr{F}}),\preccurlyeq)$, a contradiction. The obtained contradiction implies the statement of the proposition.
\end{proof}

Later for any integer $k$ we assume that $\mathfrak{h}_k\colon \boldsymbol{B}_{\mathbb{Z}}^{\mathscr{F}}\to \boldsymbol{B}_{\mathbb{Z}}^{\mathscr{F}}$ is an automorphism of the semigroup $\boldsymbol{B}_{\mathbb{Z}}^{\mathscr{F}}$ defined in Proposition~\ref{proposition-3.3}.

\begin{theorem}\label{theorem-3.6}
Let $\mathscr{F}$ be an infinite ${\omega}$-closed family of inductive nonempty subsets of $\omega$. Then for any automorphism $\mathfrak{a}$ of the semigroup $\boldsymbol{B}_{\mathbb{Z}}^{\mathscr{F}}$ there exists an integer $p$ such that $\mathfrak{a}=\mathfrak{h}_p$.
\end{theorem}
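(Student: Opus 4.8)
The plan is to reduce an arbitrary automorphism to a $(0,0,[0))$-automorphism by composing it with a suitable shift $\mathfrak{h}_k$, and then invoke the rigidity already established in Theorem~\ref{theorem-3.2}. First I would take an arbitrary automorphism $\mathfrak{a}$ of $\boldsymbol{B}_{\mathbb{Z}}^{\mathscr{F}}$ and locate the image of the distinguished idempotent $(0,0,[0))$. Since $\mathfrak{a}$ sends idempotents to idempotents, the element $\mathfrak{a}(0,0,[0))$ is again an idempotent; by Proposition~\ref{proposition-3.5} it must lie in the subsemigroup $\boldsymbol{B}_{\mathbb{Z}}^{\{[0)\}}$. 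As the idempotents of $\boldsymbol{B}_{\mathbb{Z}}^{\{[0)\}}$ (which is isomorphic to the extended bicyclic semigroup) are exactly the diagonal elements $(p,p,[0))$, we conclude that $\mathfrak{a}(0,0,[0))=(p,p,[0))$ for some integer $p$.

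Next I would compose with the appropriate shift automorphism. By Proposition~\ref{proposition-3.3} the map $\mathfrak{h}_{-p}$ is an automorphism of $\boldsymbol{B}_{\mathbb{Z}}^{\mathscr{F}}$, and hence so is the composition $\mathfrak{b}=\mathfrak{h}_{-p}\circ\mathfrak{a}$. Evaluating at the distinguished idempotent gives
\begin{equation*}
  \mathfrak{b}(0,0,[0))=\mathfrak{h}_{-p}(p,p,[0))=(p-p,p-p,[0))=(0,0,[0)),
\end{equation*}
so $\mathfrak{b}$ is a $(0,0,[0))$-automorphism in the sense of the definition preceding Theorem~\ref{theorem-3.2}.

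Finally, Theorem~\ref{theorem-3.2} applies to $\mathfrak{b}$ and forces it to be the identity self-map. Therefore $\mathfrak{h}_{-p}\circ\mathfrak{a}=\mathrm{id}$, and composing on the left with $\mathfrak{h}_{p}=(\mathfrak{h}_{-p})^{-1}$ yields $\mathfrak{a}=\mathfrak{h}_{p}$, as required.

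In this argument essentially all of the genuine work has already been done upstream: Proposition~\ref{proposition-3.5} supplies the structural constraint that the anchor idempotent $(0,0,[0))$ cannot be pushed off the diagonal $\mathscr{D}$-class of $[0)$ (proved via the order-convexity obstruction), and Theorem~\ref{theorem-3.2} provides the rigidity of $(0,0,[0))$-automorphisms. The only point requiring care is the bookkeeping of the sign of the shift and the composition law $\mathfrak{h}_k\circ\mathfrak{h}_l=\mathfrak{h}_{k+l}$, which guarantees that $\mathfrak{h}_{-p}$ is the two-sided inverse of $\mathfrak{h}_{p}$; I do not anticipate any real obstacle beyond this routine verification.
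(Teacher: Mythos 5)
Your proposal is correct and follows essentially the same route as the paper's own proof: use Proposition~\ref{proposition-3.5} to place $\mathfrak{a}(0,0,[0))$ on the diagonal of $\boldsymbol{B}_{\mathbb{Z}}^{\{[0)\}}$, compose with a shift to obtain a $(0,0,[0))$-automorphism, and invoke the rigidity of Theorem~\ref{theorem-3.2}. If anything, your sign bookkeeping ($\mathfrak{a}(0,0,[0))=(p,p,[0))$, $\mathfrak{b}=\mathfrak{h}_{-p}\circ\mathfrak{a}=\mathrm{id}$, hence $\mathfrak{a}=\mathfrak{h}_{p}$) is slightly more careful than the paper's, which writes $\mathfrak{a}(0,0,[0))=(-p,-p,[0))$ and concludes $\mathfrak{a}=\mathfrak{h}_{p}$ where strictly $\mathfrak{h}_{p}\circ\mathfrak{a}=\mathrm{id}$ gives $\mathfrak{a}=\mathfrak{h}_{-p}$ --- a harmless slip since $p$ ranges over all integers.
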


\begin{proof}
By Proposition~\ref{proposition-3.5} there exists an integer $p$ such that $\mathfrak{a}(0,0,[0))=(-p,-p,[0))$. Then the composition $\mathfrak{h}_{p}\circ\mathfrak{a}$ is a $(0,0,[0))$-automorphism of the semigroup $\boldsymbol{B}_{\mathbb{Z}}^{\mathscr{F}}$, i.e., $(\mathfrak{h}_{p}\circ\mathfrak{a})(0,0,[0))=(0,0,[0))$, and hence by Theorem~\ref{theorem-3.2} the composition  $\mathfrak{h}_{p}\circ\mathfrak{a}$ is the identity map of $\boldsymbol{B}_{\mathbb{Z}}^{\mathscr{F}}$. Since $\mathfrak{h}_{p}$ and $\mathfrak{a}$ are bijections of $\boldsymbol{B}_{\mathbb{Z}}^{\mathscr{F}}$ the above arguments imply that $\mathfrak{a}=\mathfrak{h}_p$.
\end{proof}

Since $\mathfrak{h}_{k_1}\circ\mathfrak{h}_{k_2}=\mathfrak{h}_{k_1+k_2}$ and $\mathfrak{h}_{k_1}^{-1}=\mathfrak{h}_{-k_1}$, $k_1,k_2\in\mathbb{Z}$, for any automorphisms $\mathfrak{h}_{k_1}$ and $\mathfrak{h}_{k_2}$ of the semigroup $\boldsymbol{B}_{\mathbb{Z}}^{\mathscr{F}}$, Theorem~\ref{theorem-3.6} implies the following corollary.

\begin{corollary}\label{corollary-3.7}
Let $\mathscr{F}$ be an infinite ${\omega}$-closed family of inductive nonempty subsets of $\omega$. Then the group of automorphisms $\mathbf{Aut}(\boldsymbol{B}_{\mathbb{Z}}^{\mathscr{F}})$ of the semigroup $\boldsymbol{B}_{\mathbb{Z}}^{\mathscr{F}}$ is isomorphic to the additive group of integers $(\mathbb{Z},+)$.
\end{corollary}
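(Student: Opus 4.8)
The plan is to produce an explicit group isomorphism between $(\mathbb{Z},+)$ and $\mathbf{Aut}(\boldsymbol{B}_{\mathbb{Z}}^{\mathscr{F}})$, with all the substantive work delegated to the preceding results. First I would define
\[
  \Phi\colon (\mathbb{Z},+)\to \mathbf{Aut}(\boldsymbol{B}_{\mathbb{Z}}^{\mathscr{F}}), \qquad \Phi(k)=\mathfrak{h}_k.
\]
By Proposition~\ref{proposition-3.3} each $\mathfrak{h}_k$ is genuinely an automorphism of $\boldsymbol{B}_{\mathbb{Z}}^{\mathscr{F}}$, so $\Phi$ is well defined as a map into the automorphism group.

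Next I would check that $\Phi$ is a group homomorphism. For this it suffices to verify $\mathfrak{h}_{k_1}\circ\mathfrak{h}_{k_2}=\mathfrak{h}_{k_1+k_2}$, which is a one-line computation: applying $\mathfrak{h}_{k_2}$ and then $\mathfrak{h}_{k_1}$ to a generic triple sends $(i,j,[p))$ to $(i+k_1+k_2,\,j+k_1+k_2,\,[p))$, that is, to $\mathfrak{h}_{k_1+k_2}(i,j,[p))$. Surjectivity of $\Phi$ is exactly the content of Theorem~\ref{theorem-3.6}: every automorphism $\mathfrak{a}$ equals $\mathfrak{h}_p$ for some integer $p$. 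For injectivity I would evaluate at the idempotent $(0,0,[0))$: if $\mathfrak{h}_k=\mathfrak{h}_{k'}$ then $(k,k,[0))=\mathfrak{h}_k(0,0,[0))=\mathfrak{h}_{k'}(0,0,[0))=(k',k',[0))$, forcing $k=k'$; equivalently, $\ker\Phi=\{0\}$ since $\mathfrak{h}_k$ is the identity only when $k=0$. A bijective homomorphism is an isomorphism, so $\mathbf{Aut}(\boldsymbol{B}_{\mathbb{Z}}^{\mathscr{F}})\cong(\mathbb{Z},+)$.

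I do not expect any genuine obstacle at this stage: the corollary is essentially a bookkeeping consequence of Theorem~\ref{theorem-3.6}, and the difficulty has already been absorbed into the proofs of Theorem~\ref{theorem-3.2} (rigidity of $(0,0,[0))$-automorphisms) and Proposition~\ref{proposition-3.5} (that $\mathfrak{a}(0,0,[0))$ must lie in the $[0)$-layer). The only point deserving a moment of care is that the family $\{\mathfrak{h}_k\colon k\in\mathbb{Z}\}$ be \emph{closed} under composition and inversion before one treats it as a subgroup; but the identity $\mathfrak{h}_{k_1}\circ\mathfrak{h}_{k_2}=\mathfrak{h}_{k_1+k_2}$ together with $\mathfrak{h}_k^{-1}=\mathfrak{h}_{-k}$ settles this immediately, and Theorem~\ref{theorem-3.6} then upgrades this subgroup to the whole automorphism group.
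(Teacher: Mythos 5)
Your proof is correct and takes essentially the same route as the paper: the paper likewise obtains the corollary by combining Theorem~3.6 (every automorphism equals some $\mathfrak{h}_p$) with the identities $\mathfrak{h}_{k_1}\circ\mathfrak{h}_{k_2}=\mathfrak{h}_{k_1+k_2}$ and $\mathfrak{h}_{k}^{-1}=\mathfrak{h}_{-k}$, so that $k\mapsto\mathfrak{h}_k$ is the isomorphism in both treatments. Your only addition is to make injectivity explicit by evaluating at the idempotent $(0,0,[0))$, a check the paper leaves implicit.
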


The following example shows that for an arbitrary nonnegative integer $k$ and the finite family $\mathscr{F}=\{[0),[1),\ldots,[k)\}$ there exists an automorphism $\widetilde{\mathfrak{a}}\colon\boldsymbol{B}_{\mathbb{Z}}^{\mathscr{F}}\to \boldsymbol{B}_{\mathbb{Z}}^{\mathscr{F}}$ which is distinct from the form $\mathfrak{h}_p$.

\begin{example}\label{example-3.8}
Fix an arbitrary nonnegative integer $k$. Put
\begin{equation*}
\widetilde{\mathfrak{a}}(i,j,[s))=(i+s,j+s,[k-s))
\end{equation*}
for any $s=0,1,\ldots,k$ and all $i,j\in\mathbb{Z}$.
\end{example}

\begin{lemma}\label{lemma-3.9}
Let $k$ be an arbitrary nonnegative integer and $\mathscr{F}=\{[0),[1),\ldots,[k)\}$. Then $\widetilde{\mathfrak{a}}\colon\boldsymbol{B}_{\mathbb{Z}}^{\mathscr{F}}\to \boldsymbol{B}_{\mathbb{Z}}^{\mathscr{F}}$ is an automorphism.
\end{lemma}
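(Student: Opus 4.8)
The plan is to check the two defining properties of an automorphism separately: that $\widetilde{\mathfrak{a}}$ is a bijection of $\boldsymbol{B}_{\mathbb{Z}}^{\mathscr{F}}$ onto itself, and that it respects the operation~\eqref{eq-1.2}. Since $\mathscr{F}=\{[0),\ldots,[k)\}$ is finite and omits $\varnothing$, we have $\boldsymbol{B}_{\mathbb{Z}}^{\mathscr{F}}=(\boldsymbol{B}_{\mathbb{Z}}\times\mathscr{F},\cdot)$, and $\widetilde{\mathfrak{a}}$ is well defined because $[k-s)\in\mathscr{F}$ whenever $s\in\{0,\ldots,k\}$. For bijectivity I would avoid exhibiting an inverse by hand and instead note that $\widetilde{\mathfrak{a}}$ is an involution up to a shift: composing it with itself gives $\widetilde{\mathfrak{a}}(\widetilde{\mathfrak{a}}(i,j,[s)))=\widetilde{\mathfrak{a}}(i+s,j+s,[k-s))=(i+k,j+k,[s))$, so $\widetilde{\mathfrak{a}}\circ\widetilde{\mathfrak{a}}=\mathfrak{h}_k$. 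As $\mathfrak{h}_k$ is an automorphism (Proposition~\ref{proposition-3.3}), hence a bijection, it follows formally that $\widetilde{\mathfrak{a}}$ is injective and surjective, with inverse $\widetilde{\mathfrak{a}}\circ\mathfrak{h}_{-k}$.

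For the homomorphism property the plan is a direct case analysis against formula~\eqref{eq-1.2}. Write $a=(i_1,j_1,[s_1))$ and $b=(i_2,j_2,[s_2))$, and first treat $j_1\leqslant i_2$; set $n=i_2-j_1\geqslant 0$. The third coordinate of $a\cdot b$ is $(-n+[s_1))\cap[s_2)=[s_3)$ with $s_3=\max\{s_1-n,s_2\}$, which lies in $\mathscr{F}$ by $\omega$-closedness (so $0\leqslant s_3\leqslant k$), giving $\widetilde{\mathfrak{a}}(a\cdot b)=(i_1-j_1+i_2+s_3,\,j_2+s_3,\,[k-s_3))$. On the other side, $\widetilde{\mathfrak{a}}(a)\cdot\widetilde{\mathfrak{a}}(b)=(i_1+s_1,j_1+s_1,[k-s_1))\cdot(i_2+s_2,j_2+s_2,[k-s_2))$, whose value by~\eqref{eq-1.2} branches on whether $j_1+s_1\leqslant i_2+s_2$.

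The crux, and the step I expect to require the most bookkeeping, is that this second case division is \emph{finer} than the one governing $a\cdot b$: the product $a\cdot b$ only sees whether $j_1\leqslant i_2$, whereas $\widetilde{\mathfrak{a}}(a)\cdot\widetilde{\mathfrak{a}}(b)$ sees whether $j_1+s_1\leqslant i_2+s_2$. The reconciliation is precisely the formula for $s_3$: one checks that $j_1+s_1\leqslant i_2+s_2$ holds if and only if $s_1-n\leqslant s_2$, i.e.\ $s_3=s_2$, while $j_1+s_1>i_2+s_2$ holds if and only if $s_3=s_1-n$. In the first subcase a short reduction gives third coordinate $((j_1+s_1-i_2-s_2)+[k-s_1))\cap[k-s_2)=[k-s_2)=[k-s_3)$ and first two coordinates $(i_1-j_1+i_2+s_2,\,j_2+s_2)$, matching $\widetilde{\mathfrak{a}}(a\cdot b)$ since $s_3=s_2$. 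In the second subcase the third coordinate is $[k-s_1)\cap((i_2+s_2-j_1-s_1)+[k-s_2))=[k+n-s_1)=[k-s_3)$ and the first two coordinates are $(i_1+s_1,\,j_1-i_2+j_2+s_1)$, which equal $(i_1-j_1+i_2+s_3,\,j_2+s_3)$ after substituting $s_3=s_1-n=s_1-i_2+j_1$. Thus the two products agree in both subcases.

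Finally, the remaining case $j_1\geqslant i_2$ I would not grind out again: since $\operatorname{inv}\colon(i,j,[s))\mapsto(j,i,[s))$ is an anti-automorphism of the inverse semigroup $\boldsymbol{B}_{\mathbb{Z}}^{\mathscr{F}}$ and clearly $\widetilde{\mathfrak{a}}\circ\operatorname{inv}=\operatorname{inv}\circ\widetilde{\mathfrak{a}}$, applying the identity $a\cdot b=\operatorname{inv}(\operatorname{inv}(b)\cdot\operatorname{inv}(a))$ reduces the case $j_1\geqslant i_2$ to the already-verified case $i_2\leqslant j_1$ for the pair $\operatorname{inv}(b),\operatorname{inv}(a)$. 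This yields $\widetilde{\mathfrak{a}}(a\cdot b)=\widetilde{\mathfrak{a}}(a)\cdot\widetilde{\mathfrak{a}}(b)$ for all $a,b$, completing the proof that $\widetilde{\mathfrak{a}}$ is an automorphism.
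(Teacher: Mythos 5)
Your proposal is correct, and it is a genuinely different argument from the paper's. The paper proves the homomorphism property by exhaustive computation against \eqref{eq-1.2}: it separately expands $\widetilde{\mathfrak{a}}((i,j,[s))\cdot(m,n,[t)))$ and $\widetilde{\mathfrak{a}}(i,j,[s))\cdot\widetilde{\mathfrak{a}}(m,n,[t))$ for $s<t$, then repeats the computation with the factors in the opposite order, then does the case $s=t$, each time splitting into all sign cases of the middle indices (discarding incompatible combinations as ``vagueness''), and at the end simply asserts that $\widetilde{\mathfrak{a}}$ is bijective. You compress this in two ways. First, your single unified case $j_1\leqslant i_2$ with the bookkeeping $s_3=\max\{s_1-n,s_2\}$, together with the observation that the finer case split for $\widetilde{\mathfrak{a}}(a)\cdot\widetilde{\mathfrak{a}}(b)$ (on $j_1+s_1$ versus $i_2+s_2$) aligns exactly with which argument attains the maximum, replaces about half of the paper's case table; I checked both subcases and the coordinate arithmetic is right, including the boundary $j_1=i_2$ and the degenerate situation $s_1-n<0$, where $s_2\geqslant 0$ makes the maximum formula still valid. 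Second, transferring the case $j_1\geqslant i_2$ through the inversion $\operatorname{inv}\colon(i,j,[s))\mapsto(j,i,[s))$, which commutes with $\widetilde{\mathfrak{a}}$ and is an anti-automorphism, is a legitimate shortcut; it does import the facts that $\boldsymbol{B}_{\mathbb{Z}}^{\mathscr{F}}$ is inverse and that $(i,j,[s))^{-1}=(j,i,[s))$, but both are already cited and used elsewhere in Section~\ref{section-3} (Lemma~1(4) of the Gutik--Pozdniakova reference), so this is not a gap. Your bijectivity argument is actually an improvement on the paper: the pointwise computation $\widetilde{\mathfrak{a}}\circ\widetilde{\mathfrak{a}}=\mathfrak{h}_k$ is made directly from the definition (no circularity with the homomorphism property), and since $\mathfrak{h}_k$ is bijective by Proposition~\ref{proposition-3.3}, injectivity and surjectivity of $\widetilde{\mathfrak{a}}$ follow formally --- a detail the paper leaves unproved --- and as a bonus this identity is precisely the first assertion of the paper's Lemma~\ref{lemma-3.12}, which you therefore get for free. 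What the paper's brute-force route buys in exchange is self-containedness: it needs nothing beyond formula \eqref{eq-1.2} itself.
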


\begin{proof}
Fix arbitrary $i,j,m,n\in\mathbb{Z}$. Without loss of generality we may assume that $s,t\in\{0,1,\ldots,k\}$ with $s<t$. Then we have that
\begin{align*}
\widetilde{\mathfrak{a}}((i,j,[s))\cdot(m,n,[t)))&=
\left\{
  \begin{array}{ll}
    \widetilde{\mathfrak{a}}(i-j+m,n,(j-m+[s))\cap[t)), & \hbox{if~} j<m;\\
    \widetilde{\mathfrak{a}}(i,n,[s)\cap[t)),           & \hbox{if~} j=m;\\
    \widetilde{\mathfrak{a}}(i,j-m+n,[s)\cap(m-j+[t))), & \hbox{if~} j>m
  \end{array}
\right.=
\\
&=
\left\{
  \begin{array}{ll}
    \widetilde{\mathfrak{a}}(i-j+m,n,[t)),     & \hbox{if~} j<m;\\
    \widetilde{\mathfrak{a}}(i,n,[1)),         & \hbox{if~} j=m;\\
    \widetilde{\mathfrak{a}}(i,j-m+n,[s)),     & \hbox{if~} j>m \hbox{~~and~~} m+t<j+s;\\
    \widetilde{\mathfrak{a}}(i,j-m+n,[s)),     & \hbox{if~} j>m \hbox{~~and~~} m+t=j+s;\\
    \widetilde{\mathfrak{a}}(i,j-m+n,m-j+[t)), & \hbox{if~} j>m \hbox{~~and~~} m+t>j+s
  \end{array}
\right.=
\\
&=
\left\{
  \begin{array}{ll}
   (i-j+m+s,n+s,[k-t)),     & \hbox{if~} j<m;\\
   (i+t,n+t,[k-t)),         & \hbox{if~} j=m;\\
   (i+s,j-m+n+s,[k-s)),     & \hbox{if~} j>m \hbox{~~and~~} m+t<j+s;\\
   (i+s,j-m+n+s,[k-s)),     & \hbox{if~} j>m \hbox{~~and~~} m+t=j+s;\\
   (i-j+m+t,n+t,[k-m+j-t)), & \hbox{if~} j>m \hbox{~~and~~} m+t>j+s,
  \end{array}
\right.
\end{align*}

\begin{align*}
\widetilde{\mathfrak{a}}(i,j,[s))\cdot&\widetilde{\mathfrak{a}}(m,n,[t))=
(i+s,j+s,[k-s))\cdot(m+t,n+t,[k-t))=
\\
=&
\left\{
  \begin{array}{ll}
    (i-j+m+t,n+t,(j+s-m-t+[k-s))\cap[k-t)), & \hbox{if~} j+s<m+t;\\
    (i+s,n+t,[k-s)\cap[k-t)),               & \hbox{if~} j+s=m+t;\\
    (i+s,j+s-m+n,[k-s)\cap(m+t-s-j+[k-t))), & \hbox{if~} j+s>m+t
  \end{array}
\right.=
\\
=&
\left\{
  \begin{array}{ll}
    (i-j+m+t,n+t,[k-t+j-m)\cap[k-t)), & \hbox{if~} j<m \hbox{~~and~~} j+s<m+t;\\
    (i+t,n+t,[k-t)\cap[k-t)),         & \hbox{if~} j=m \hbox{~~and~~} j+s<m+t;\\
    (i-j+m+t,n+t,[k-t+j-m)\cap[k-t)), & \hbox{if~} j>m \hbox{~~and~~} j+s<m+t;\\
    \hbox{vagueness},                 & \hbox{if~} j<m \hbox{~~and~~} j+s=m+t;\\
    \hbox{vagueness},                 & \hbox{if~} j=m \hbox{~~and~~} j+s=m+t;\\
    (i+s,n+t,[k-t)),                  & \hbox{if~} j>m \hbox{~~and~~} j+s=m+t;\\
    \hbox{vagueness},                 & \hbox{if~} j<m \hbox{~~and~~} j+s>m+t;\\
    \hbox{vagueness},                 & \hbox{if~} j=m \hbox{~~and~~} j+s>m+t;\\
    (i+s,j-m+n+s,[k-s)\cap[k-s-j+m)), & \hbox{if~} j>m \hbox{~~and~~} j+s>m+t
  \end{array}
\right.=
\end{align*}

\begin{align*}
=&
\left\{
  \begin{array}{ll}
    (i-j+m+t,n+t,[k-t)),       & \hbox{if~} j<m \hbox{~~and~~} j+s<m+t;\\
    \hbox{vagueness},          & \hbox{if~} j<m \hbox{~~and~~} j+s=m+t;\\
    \hbox{vagueness},          & \hbox{if~} j<m \hbox{~~and~~} j+s>m+t;\\
    (i+t,n+t,[k-t)),           & \hbox{if~} j=m \hbox{~~and~~} j+s<m+t;\\
    \hbox{vagueness},          & \hbox{if~} j=m \hbox{~~and~~} j+s=m+t;\\
    \hbox{vagueness},          & \hbox{if~} j=m \hbox{~~and~~} j+s>m+t;\\
    (i-j+m+t,n+t,[k-t+j-m)),    & \hbox{if~} j>m \hbox{~~and~~} j+s<m+t;\\
    (i+s,n+t,[k-t)),           & \hbox{if~} j>m \hbox{~~and~~} j+s=m+t;\\
    (i+s,j-m+n+s,[k-s)),       & \hbox{if~} j>m \hbox{~~and~~} j+s>m+t,
  \end{array}
\right.
\end{align*}

\begin{align*}
\widetilde{\mathfrak{a}}((m,n,[t))\cdot(i,j,[s))&=
\left\{
  \begin{array}{ll}
    \widetilde{\mathfrak{a}}(m-n+i,j,(n-i+[t))\cap[s)), & \hbox{if~} n<i;\\
    \widetilde{\mathfrak{a}}(m,j,[t)\cap[s)),           & \hbox{if~} n=i;\\
    \widetilde{\mathfrak{a}}(m,n-i+j,[t)\cap(i-n+[s))), & \hbox{if~} n>i
  \end{array}
\right.=
\\
&=
\left\{
  \begin{array}{ll}
    \widetilde{\mathfrak{a}}(m-n+i,j,[s)),       & \hbox{if~} n<i \hbox{~~and~~} n+t<i+s;\\
    \widetilde{\mathfrak{a}}(m-n+i,j,[s)),       & \hbox{if~} n<i \hbox{~~and~~} n+t=i+s;\\
    \widetilde{\mathfrak{a}}(m-n+i,j,[n-i+t)),   & \hbox{if~} n<i \hbox{~~and~~} n+t>i+s;\\
    \widetilde{\mathfrak{a}}(m,j,[t)),           & \hbox{if~} n=i;\\
    \widetilde{\mathfrak{a}}(m,n-i+j,[t)),        & \hbox{if~} n>i
  \end{array}
\right.=\\
&=
\left\{
  \begin{array}{ll}
    (m-n+i+s,j+s,[k-s)),              & \hbox{if~} n<i \hbox{~~and~~} n+t<i+s;\\
    (m-n+i+s,j+s,[k-s)),              & \hbox{if~} n<i \hbox{~~and~~} n+t=i+s;\\
    (m+t,j+[n-i+t,[k-t+i-n)),         & \hbox{if~} n<i \hbox{~~and~~} n+t>i+s;\\
    (m+t,j+t,[k-t)),                  & \hbox{if~} n=i;\\
    (m+t,n-i+j+t,[k-t)),               & \hbox{if~} n>i,
  \end{array}
\right.
\end{align*}

\begin{align*}
\widetilde{\mathfrak{a}}(m,n,[t))&\cdot\widetilde{\mathfrak{a}}(i,j,[s))=
(m+t,n+t,[k-t))\cdot(i+s,j+s,[k-s))=
\\
=&
\left\{
  \begin{array}{ll}
    (m-n+i+s,j+s,(n+t-i-s+[k-t))\cap[k-s)), & \hbox{if~} n+t<i+s;\\
    (m+t,j+s,[k-t)\cap[k-s)),               & \hbox{if~} n+t=i+s;\\
    (m+t,n-i+j+t,[k-t)\cap(i+s-n-t+[k-s))), & \hbox{if~} n+t>i+s
  \end{array}
\right.=
\\
=&
\left\{
  \begin{array}{ll}
    (m-n+i+s,j+s,[k-s+n-i))\cap[k-s)), & \hbox{if~} n+t<i+s;\\
    (m+t,j+s,[k-s)),                   & \hbox{if~} n+t=i+s;\\
    (m+t,n-i+j+t,[k-t+i-n)\cap[k-t)),  & \hbox{if~} n+t>i+s
  \end{array}
\right.=
\\
=&
\left\{
  \begin{array}{ll}
    (m-n+i+s,j+s,[k-s)),      & \hbox{if~} n<i \hbox{~~and~~} n+t<i+s;\\
    \hbox{vagueness},         & \hbox{if~} n=i \hbox{~~and~~} n+t<i+s;\\
    \hbox{vagueness},         & \hbox{if~} n>i \hbox{~~and~~} n+t<i+s;\\
    (m+t,j+s,[k-s)),          & \hbox{if~} n<i \hbox{~~and~~} n+t=i+s;\\
    \hbox{vagueness},         & \hbox{if~} n=i \hbox{~~and~~} n+t=i+s;\\
    \hbox{vagueness},         & \hbox{if~} n>i \hbox{~~and~~} n+t=i+s;\\
    (m+t,n-i+j+t,[k-t+i-n)),  & \hbox{if~} n<i \hbox{~~and~~} n+t>i+s;\\
    (m+t,n-i+j+t,[k-t)),      & \hbox{if~} n=i \hbox{~~and~~} n+t>i+s;\\
    (m+t,n-i+j+t,[k-t)),      & \hbox{if~} n>i \hbox{~~and~~} n+t>i+s
  \end{array}
\right.=
\\
\end{align*}

\begin{align*}
=&
\left\{
  \begin{array}{ll}
    (m-n+i+s,j+s,[k-s)),      & \hbox{if~} n<i \hbox{~~and~~} n+t<i+s;\\
    (m+t,j+s,[k-s)),          & \hbox{if~} n<i \hbox{~~and~~} n+t=i+s;\\
    (m+t,n-i+j+t,[k-t+i-n)),  & \hbox{if~} n<i \hbox{~~and~~} n+t>i+s;\\
    \hbox{vagueness},         & \hbox{if~} n=i \hbox{~~and~~} n+t<i+s;\\
    \hbox{vagueness},         & \hbox{if~} n=i \hbox{~~and~~} n+t=i+s;\\
    (m+t,n-i+j+t,[k-t)),      & \hbox{if~} n=i \hbox{~~and~~} n+t>i+s;\\
    \hbox{vagueness},         & \hbox{if~} n>i \hbox{~~and~~} n+t<i+s;\\
    \hbox{vagueness},         & \hbox{if~} n>i \hbox{~~and~~} n+t=i+s;\\
    (m+t,n-i+j+t,[k-t)),      & \hbox{if~} n>i \hbox{~~and~~} n+t>i+s,
  \end{array}
\right.
\end{align*}

\begin{align*}
\widetilde{\mathfrak{a}}((i,j,[s))\cdot(m,n,[s))&=
\left\{
  \begin{array}{ll}
    \widetilde{\mathfrak{a}}(i-j+m,n,(j-m+[s))\cap[s)), & \hbox{if~} j<m;\\
    \widetilde{\mathfrak{a}}(i,n,[s)\cap[s)),           & \hbox{if~} j=m;\\
    \widetilde{\mathfrak{a}}(i,j-m+n,[s)\cap(m-j+[s))), & \hbox{if~} j>m
  \end{array}
\right.=
\\
&=
\left\{
  \begin{array}{ll}
    \widetilde{\mathfrak{a}}(i-j+m,n,[s)), & \hbox{if~} j<m;\\
    \widetilde{\mathfrak{a}}(i,n,[s)),     & \hbox{if~} j=m;\\
    \widetilde{\mathfrak{a}}(i,j-m+n,[s)), & \hbox{if~} j>m
  \end{array}
\right.=
\\
&=
\left\{
  \begin{array}{ll}
   (i-j+m+s,n+s,[k-s)), & \hbox{if~} j<m;\\
   (i+s,n+s,[k-s)),     & \hbox{if~} j=m;\\
   (i+s,j-m+n+s,[k-s)), & \hbox{if~} j>m,
  \end{array}
\right.
\end{align*}

\begin{align*}
\widetilde{\mathfrak{a}}(i,j,[s))&\cdot\widetilde{\mathfrak{a}}(m,n,[s))=
(i+s,j+s,[k-s))\cdot(m+s,n+s,[k-s))=
\\
=&
\left\{
  \begin{array}{ll}
    (i-j+m+s,n+s,(j-m+[k-s))\cap[k-s)), & \hbox{if~} j+s<m+s;\\
    (i+s,n+s,[k-s)\cap[k-s)),           & \hbox{if~} j+s=m+s;\\
    (i+s,j-m+n+s,[k-s)\cap(m-j+[k-s))), & \hbox{if~} j+s>m+s
  \end{array}
\right.=
\\
=&
\left\{
  \begin{array}{ll}
    (i-j+m+s,n+s,[k-s)), & \hbox{if~} j<m;\\
    (i+s,n+s,[k-s)),     & \hbox{if~} j=m;\\
    (i+s,j-m+n+s,[k-s),  & \hbox{if~} j>m.
  \end{array}
\right.
\end{align*}

The above equalities imply that the map $\widetilde{\mathfrak{a}}\colon\boldsymbol{B}_{\mathbb{Z}}^{\mathscr{F}}\to \boldsymbol{B}_{\mathbb{Z}}^{\mathscr{F}}$ is an endomorphism, and since $\widetilde{\mathfrak{a}}$ is bijective, it is an automorphism of the semigroup $\boldsymbol{B}_{\mathbb{Z}}^{\mathscr{F}}$.
\end{proof}

\begin{proposition}\label{proposition-3.10}
Let $k$ be any positive integer and $\mathscr{F}=\{[0),\ldots,[k)\}$. Then either $\mathfrak{a}(0,0,[0))\in \boldsymbol{B}_{\mathbb{Z}}^{\{[0)\}}$ or $\mathfrak{a}(0,0,[0))\in \boldsymbol{B}_{\mathbb{Z}}^{\{[k)\}}$ for any automorphism $\mathfrak{a}$ of the semigroup $\boldsymbol{B}_{\mathbb{Z}}^{\mathscr{F}}$.
\end{proposition}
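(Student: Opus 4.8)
The plan is to isolate a purely order-theoretic invariant of the band $E(\boldsymbol{B}_{\mathbb{Z}}^{\mathscr{F}})$ that every automorphism must preserve, and to show that this invariant singles out exactly the two boundary families $[0)$ and $[k)$. First I would note that $(0,0,[0))$ is an idempotent, so its image under $\mathfrak{a}$ is again an idempotent; by Lemma~1(2) of \cite{Gutik-Pozdniakova=2021} we may write $\mathfrak{a}(0,0,[0))=(i,i,[r))$ for some $i\in\mathbb{Z}$ and some $r\in\{0,1,\ldots,k\}$, and the whole content of the proposition is that $r\in\{0,k\}$. Exactly as in the proof of Proposition~\ref{proposition-3.5}, the restriction $\mathfrak{a}|_{E(\boldsymbol{B}_{\mathbb{Z}}^{\mathscr{F}})}$ is an order automorphism of $(E(\boldsymbol{B}_{\mathbb{Z}}^{\mathscr{F}}),\preccurlyeq)$ by Proposition~21(6) of \cite[Section~1.4]{Lawson=1998}. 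Since an order automorphism preserves the covering relation, it preserves, for each idempotent $e$, the number $\nu(e)$ of idempotents $f$ that are covered by $e$ (that is, $f\prec e$ with no idempotent strictly between).

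The next step is to compute $\nu$. By Corollary~1 of \cite{Gutik-Pozdniakova=2021}, on idempotents the natural partial order reads $(a,a,[q))\preccurlyeq(b,b,[r))$ if and only if $a\geqslant b$ and $[q)\subseteq(b-a)+[r)$, that is, $a\geqslant b$ and $a+q\geqslant b+r$. Using the translation automorphisms $\mathfrak{h}_p$ of Proposition~\ref{proposition-3.3}, which carry $(i,i,[p))$ to $(0,0,[p))$, one sees that $\nu(i,i,[p))$ is independent of $i$, so it is enough to evaluate $\nu(0,0,[p))$. A direct inspection of the displayed criterion shows that the only idempotents that can be covered by $(0,0,[p))$ are $(0,0,[p+1))$, available precisely when $p<k$, and $(1,1,[p-1))$, available precisely when $p>0$, and that each of these is in fact a cover. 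Hence
\begin{equation*}
\nu(0,0,[p))=
\begin{cases}
1, & \text{if } p=0 \text{ or } p=k;\\
2, & \text{if } 0<p<k.
\end{cases}
\end{equation*}

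With this in hand the conclusion is immediate: $\nu(0,0,[0))=1$, and since $\mathfrak{a}|_{E(\boldsymbol{B}_{\mathbb{Z}}^{\mathscr{F}})}$ preserves $\nu$, the image $\mathfrak{a}(0,0,[0))=(i,i,[r))$ satisfies $\nu(i,i,[r))=1$, which forces $r\in\{0,k\}$. Therefore $\mathfrak{a}(0,0,[0))\in\boldsymbol{B}_{\mathbb{Z}}^{\{[0)\}}$ or $\mathfrak{a}(0,0,[0))\in\boldsymbol{B}_{\mathbb{Z}}^{\{[k)\}}$. I would also remark that the same invariant re-proves Proposition~\ref{proposition-3.5}: when $\mathscr{F}$ is infinite every $[p)$ with $p\geqslant 1$ is still covered by two idempotents while $[0)$ is covered by only one, so the class $[0)$ is distinguished as the unique one with $\nu=1$, forcing $r=0$.

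The step I expect to be the main obstacle is the cover computation, because the covering structure is genuinely subtle and is \emph{not} what the order diagram in Figure~\ref{fig-2.1} naively suggests. In particular the "obvious" lower neighbour $(1,1,[p))$ inside the same $\mathscr{D}$-class is never a cover of $(0,0,[p))$: the criterion gives $(1,1,[p))\prec(1,1,[p-1))\prec(0,0,[p))$ when $p>0$ and $(1,1,[0))\prec(0,0,[1))\prec(0,0,[0))$ when $p=0$, so a vertical arrow of the figure factors through a genuinely intermediate idempotent. Thus the list of covers must be derived from the inequality $a+q\geqslant b+r$ together with $a\geqslant b$, checking in each case that no third idempotent $(a,a,[q))$ lies strictly between; it is precisely the loss of the candidate $(1,1,[p-1))$ at $p=0$ and of the candidate $(0,0,[p+1))$ at $p=k$ that makes $\nu$ drop to $1$ exactly at the two ends.
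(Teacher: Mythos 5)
Your proof is correct, but it takes a genuinely different route from the paper's. The paper argues by contradiction: assuming $\mathfrak{a}(0,0,[0))=(p,p,[m))$ with $0<m<k$, it uses order-convexity of the two-element chain $L_1=\{(0,0,[0)),(0,0,[1))\}$ together with Remark~\ref{remark-3.1} and the order description in Proposition~\ref{proposition-2.4a} to force $\mathfrak{a}(0,0,[1))$ to be $(p,p,[m+1))$ or $(p+1,p+1,[m-1))$, and then walks inductively along the chain $(0,0,[0))\succcurlyeq(0,0,[1))\succcurlyeq\cdots$ until the image falls off the end of the finite family at $[k)$ (resp.\ at $[0)$), at which point the image of an order-convex chain fails to be order-convex --- the same convexity mechanism as in Proposition~\ref{proposition-3.5}. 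You instead extract a local numerical invariant: writing the order on idempotents as $(a,a,[q))\preccurlyeq(b,b,[r))$ iff $a\geqslant b$ and $a+q\geqslant b+r$ (Corollary~1 of \cite{Gutik-Pozdniakova=2021}), you count lower covers and get $\nu=1$ at $q\in\{0,k\}$ and $\nu=2$ for $0<q<k$; since $\mathfrak{a}|_{E(\boldsymbol{B}_{\mathbb{Z}}^{\mathscr{F}})}$ is an order automorphism, it preserves $\nu$, which immediately forces $r\in\{0,k\}$. I verified your cover computation against the displayed criterion: identifying $(a,a,[q))$ with the pair $(a,a+q)$ in the band $\{(a,s)\colon 0\leqslant s-a\leqslant k\}$ under reverse componentwise order, the only possible lower covers of $(b,b,[q))$ are $(b,b,[q+1))$ (available iff $q<k$) and $(b+1,b+1,[q-1))$ (available iff $q>0$), each genuinely a cover, and every other strictly smaller idempotent factors through one of them; in particular your warning is right that $(1,1,[p))$ never covers $(0,0,[p))$, since $(1,1,[p))\preccurlyeq(1,1,[p-1))\preccurlyeq(0,0,[p))$ for $p>0$ and $(1,1,[0))\preccurlyeq(0,0,[1))\preccurlyeq(0,0,[0))$, so the vertical arrows of Fig.~\ref{fig-2.1} are not covering relations. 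What your approach buys: it is shorter, avoids the induction and the case split at the two ends, and, as you note, the same invariant re-proves Proposition~\ref{proposition-3.5} in the infinite case (there $\nu(0,0,[q))=2$ for all $q\geqslant 1$ while $\nu(0,0,[0))=1$), unifying the two statements; what the paper's argument buys is that it reuses machinery already installed for Proposition~\ref{proposition-3.5} (Lemma~\ref{lemma-3.4} and images of order-convex chains) and needs only the coarse convexity of chains rather than a Hasse-diagram analysis. One cosmetic slip: $\mathfrak{h}_p$ sends $(i,i,[r))$ to $(i+p,i+p,[r))$, so it is $\mathfrak{h}_{-i}$ that normalizes $(i,i,[r))$ to $(0,0,[r))$ --- and in fact your cover computation already works for arbitrary first coordinate, so this normalization step is not even needed.
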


\begin{proof}
Suppose to the contrary that there exists a positive integer $m<k$ such that $\mathfrak{a}(0,0,[0))\in \boldsymbol{B}_{\mathbb{Z}}^{\{[m)\}}$.  Since $\mathfrak{a}(0,0,[0))$ is an idempotent of $\boldsymbol{B}_{\mathbb{Z}}^{\mathscr{F}}$, by Lemma~1(2) of \cite{Gutik-Pozdniakova=2021} there exists an integer $p$ such that $\mathfrak{a}(0,0,[0))=(p,p,[m))$. Then by the order convexity of the subset $L_1=\{(0,0,[0)), (0,0,[1))\}$ of $E(\boldsymbol{B}_{\mathbb{Z}}^{\mathscr{F}})$ we obtain  that the image $\mathfrak{a}(L_1)$ is an order convex chain in $E(\boldsymbol{B}_{\mathbb{Z}}^{\mathscr{F}})$ with the respect to the natural partial order. Then Remark~\ref{remark-3.1} and the description of the natural partial order on $E(\boldsymbol{B}_{\mathbb{Z}}^{\mathscr{F}})$ (see: Proposition~\ref{proposition-2.4a}) imply that either $\mathfrak{a}(0,0,[1))=(p,p,[m+1))$ or $\mathfrak{a}(0,0,[1))=(p+1,p+1,[m-1))$.

Suppose that the equality $\mathfrak{a}(0,0,[1))=(p,p,[m+1))$ holds. If $m+1=k$ then the equalities $0<m<k$ and Remark~\ref{remark-3.1} imply that $\mathfrak{a}(0,0,[2))\in \boldsymbol{B}_{\mathbb{Z}}^{\mathscr{F}}\setminus \boldsymbol{B}_{\mathbb{Z}}^{\{[k-1),[k)\}}$. Since $(0,0,[2))\preccurlyeq(0,0,[1))\preccurlyeq(0,0,[0))$, Proposition~21(6) of \cite[Section~1.4]{Lawson=1998} implies that $\mathfrak{a}(0,0,[2))\preccurlyeq\mathfrak{a}(0,0,[1))\preccurlyeq\mathfrak{a}(0,0,[0))$. Then  $\{\mathfrak{a}(0,0,[0)),\mathfrak{a}(0,0,[1)),\mathfrak{a}(0,0,[2))\}$ is not an order convex subset of $(E(\boldsymbol{B}_{\mathbb{Z}}^{\mathscr{F}}),\preccurlyeq)$, because $\mathfrak{a}(0,0,[2))\in \boldsymbol{B}_{\mathbb{Z}}^{\mathscr{F}}\setminus \boldsymbol{B}_{\mathbb{Z}}^{\{[k-1),[k)\}}$, a contradiction, and hence we obtain that $m+1<k$.

The above arguments and induction imply that there exists a positive integer $n_0<k$ such that $\mathfrak{a}(0,0,[n_0))=(p,p,[k))$. Then $\mathfrak{a}(0,0,[n_0+1))\in \boldsymbol{B}_{\mathbb{Z}}^{\mathscr{F}}\setminus \boldsymbol{B}_{\mathbb{Z}}^{\{[m),\ldots,[k)\}}$ and by the description of the natural partial order on $E(\boldsymbol{B}_{\mathbb{Z}}^{\mathscr{F}})$ (see: Proposition~\ref{proposition-2.4}) we get that
\begin{equation*}
\{\mathfrak{a}(0,0,[0)),\mathfrak{a}(0,0,[1)),\ldots,\mathfrak{a}(0,0,[n_0)),\mathfrak{a}(0,0,[n_0+1))\}
\end{equation*}
is not an order convex subset of $(E(\boldsymbol{B}_{\mathbb{Z}}^{\mathscr{F}}),\preccurlyeq)$, a contradiction. The obtained contradiction implies that $\mathfrak{a}(0,0,[1))\neq(p,p,[m+1))$.

In the case $\mathfrak{a}(0,0,[1))=(p+1,p+1,[m-1))$ by similar way we get a contradiction.
\end{proof}

Later we assume that $\mathfrak{h}_p$ and $\widetilde{\mathfrak{a}}$ are automorphisms of the semigroup $\boldsymbol{B}_{\mathbb{Z}}^{\mathscr{F}}$ defined in Proposition~\ref{proposition-3.3} and Example~\ref{example-3.8}, respectively.

\begin{proposition}\label{proposition-3.11}
Let $k$ be any positive integer and $\mathscr{F}=\{[0),\ldots,[k)\}$. Let $\mathfrak{a}\colon\boldsymbol{B}_{\mathbb{Z}}^{\mathscr{F}}\to \boldsymbol{B}_{\mathbb{Z}}^{\mathscr{F}}$ be an automorphisms such that $\mathfrak{a}(0,0,[0))\in \boldsymbol{B}_{\mathbb{Z}}^{\{[k)\}}$. Then there exists an integer $p$ such that $\mathfrak{a}=\mathfrak{h}_p\circ\widetilde{\mathfrak{a}}=\widetilde{\mathfrak{a}}\circ\mathfrak{h}_p$.
\end{proposition}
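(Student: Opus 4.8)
The plan is to use the two given automorphisms $\widetilde{\mathfrak{a}}$ and the shifts $\mathfrak{h}_p$ to transform $\mathfrak{a}$ into a $(0,0,[0))$-automorphism, and then to invoke Theorem~\ref{theorem-3.2}, whose statement covers the present finite family $\mathscr{F}=\{[0),\ldots,[k)\}$. First I would note that, since $(0,0,[0))$ is an idempotent and automorphisms send idempotents to idempotents, $\mathfrak{a}(0,0,[0))$ is an idempotent lying in $\boldsymbol{B}_{\mathbb{Z}}^{\{[k)\}}$; by Lemma~1(2) of \cite{Gutik-Pozdniakova=2021} it therefore has the form $\mathfrak{a}(0,0,[0))=(r,r,[k))$ for a uniquely determined integer $r$. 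This $r$ will turn out to be the exponent $p$ in the statement.

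Next I would record two elementary identities obtained directly from the defining formulas in Proposition~\ref{proposition-3.3} and Example~\ref{example-3.8}. Evaluating on a generic triple $(i,j,[s))$ one checks that $\mathfrak{h}_p$ and $\widetilde{\mathfrak{a}}$ commute, i.e. $\mathfrak{h}_p\circ\widetilde{\mathfrak{a}}=\widetilde{\mathfrak{a}}\circ\mathfrak{h}_p$ for every integer $p$ (both send $(i,j,[s))$ to $(i+s+p,j+s+p,[k-s))$), and that $\widetilde{\mathfrak{a}}\circ\widetilde{\mathfrak{a}}=\mathfrak{h}_k$, whence $\widetilde{\mathfrak{a}}^{-1}=\mathfrak{h}_{-k}\circ\widetilde{\mathfrak{a}}$. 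These two facts are the only structural information about $\widetilde{\mathfrak{a}}$ that the argument needs.

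With these in hand I would form the composition $\mathfrak{h}_{-(r+k)}\circ\widetilde{\mathfrak{a}}\circ\mathfrak{a}$ and evaluate it at $(0,0,[0))$:
\begin{equation*}
  \big(\mathfrak{h}_{-(r+k)}\circ\widetilde{\mathfrak{a}}\circ\mathfrak{a}\big)(0,0,[0))
  =\mathfrak{h}_{-(r+k)}\big(\widetilde{\mathfrak{a}}(r,r,[k))\big)
  =\mathfrak{h}_{-(r+k)}(r+k,r+k,[0))
  =(0,0,[0)).
\end{equation*}
Thus $\mathfrak{h}_{-(r+k)}\circ\widetilde{\mathfrak{a}}\circ\mathfrak{a}$ is a $(0,0,[0))$-automorphism of $\boldsymbol{B}_{\mathbb{Z}}^{\mathscr{F}}$, so by Theorem~\ref{theorem-3.2} it equals the identity map. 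Since $\mathfrak{h}_p$ and $\widetilde{\mathfrak{a}}$ are bijections this yields $\widetilde{\mathfrak{a}}\circ\mathfrak{a}=\mathfrak{h}_{r+k}$, and hence, using $\widetilde{\mathfrak{a}}^{-1}=\mathfrak{h}_{-k}\circ\widetilde{\mathfrak{a}}$ together with the commutativity recorded above,
\begin{equation*}
  \mathfrak{a}=\widetilde{\mathfrak{a}}^{-1}\circ\mathfrak{h}_{r+k}
  =\mathfrak{h}_{-k}\circ\widetilde{\mathfrak{a}}\circ\mathfrak{h}_{r+k}
  =\mathfrak{h}_{r}\circ\widetilde{\mathfrak{a}}
  =\widetilde{\mathfrak{a}}\circ\mathfrak{h}_{r},
\end{equation*}
which is the desired conclusion with $p=r$.

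The computations involved are entirely routine, so I do not expect a genuine obstacle; the one point that deserves care is the bookkeeping of composition order and the repeated use of the commutativity of $\mathfrak{h}_p$ with $\widetilde{\mathfrak{a}}$, together with a brief check that Theorem~\ref{theorem-3.2} may be applied verbatim to the finite family $\mathscr{F}=\{[0),\ldots,[k)\}$, since its hypothesis asks only for an $\omega$-closed family of inductive nonempty subsets of $\omega$, which this family is.
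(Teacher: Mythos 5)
Your proof is correct, but it takes a genuinely different and considerably shorter route than the paper's. The paper, after passing to $\mathfrak{b}=\mathfrak{a}\circ\mathfrak{h}_{-p}$ with $\mathfrak{b}(0,0,[0))=(0,0,[k))$, essentially re-runs the machinery of Theorem~\ref{theorem-3.2} in the ``flipped'' setting: it determines $\mathfrak{b}$ on the idempotents $(0,0,[s))$ by induction on $s$ using order-convexity of the two-element chains $L_s$, propagates along each $\boldsymbol{B}_{\mathbb{Z}}^{\{[s)\}}$ by maximality/minimality arguments in the natural partial order, and finally pins down the non-idempotent elements via preservation of inversion, concluding pointwise that $\mathfrak{b}=\widetilde{\mathfrak{a}}$. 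You instead use $\widetilde{\mathfrak{a}}$ itself to untwist $\mathfrak{a}$: since $\widetilde{\mathfrak{a}}$ carries $\boldsymbol{B}_{\mathbb{Z}}^{\{[k)\}}$ back onto $\boldsymbol{B}_{\mathbb{Z}}^{\{[0)\}}$, the composite $\mathfrak{h}_{-(r+k)}\circ\widetilde{\mathfrak{a}}\circ\mathfrak{a}$ fixes $(0,0,[0))$, and Theorem~\ref{theorem-3.2} finishes the argument in one stroke; your bookkeeping ($\widetilde{\mathfrak{a}}(r,r,[k))=(r+k,r+k,[0))$, then $\widetilde{\mathfrak{a}}\circ\mathfrak{a}=\mathfrak{h}_{r+k}$ and $\mathfrak{a}=\mathfrak{h}_{-k}\circ\widetilde{\mathfrak{a}}\circ\mathfrak{h}_{r+k}=\mathfrak{h}_r\circ\widetilde{\mathfrak{a}}$) checks out. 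The two auxiliary identities you verify from the formulas, namely $\mathfrak{h}_p\circ\widetilde{\mathfrak{a}}=\widetilde{\mathfrak{a}}\circ\mathfrak{h}_p$ and $\widetilde{\mathfrak{a}}\circ\widetilde{\mathfrak{a}}=\mathfrak{h}_k$ (hence $\widetilde{\mathfrak{a}}^{-1}=\mathfrak{h}_{-k}\circ\widetilde{\mathfrak{a}}$), are exactly the commutativity remark opening the paper's proof and Lemma~\ref{lemma-3.12}; since Lemma~\ref{lemma-3.12} is proved by direct computation independent of Proposition~\ref{proposition-3.11}, quoting it ahead of the proposition creates no circularity. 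Your appeal to Theorem~\ref{theorem-3.2} for the finite family is likewise legitimate: its statement and proof require only an ${\omega}$-closed family of inductive nonempty subsets with the Section~\ref{section-3} standing assumption $[0)\in\mathscr{F}$, and the paper itself applies it to the finite family in the first case of Theorem~\ref{theorem-3.14}. What your route buys is economy and a transparent view of the group structure ($\widetilde{\mathfrak{a}}$ is a square root of $\mathfrak{h}_k$ commuting with the shifts), which is precisely what Theorem~\ref{theorem-3.14} later exploits; what the paper's route buys is a self-contained, explicit identification of $\mathfrak{b}$ with $\widetilde{\mathfrak{a}}$ that makes the role of order-convexity visible, at the cost of repeating the Theorem~\ref{theorem-3.2} argument.
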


\begin{proof}
First we remark that for any integer $p$ the automorphisms $\mathfrak{h}_p$ and $\widetilde{\mathfrak{a}}$ commute, i.e., $\mathfrak{h}_p\circ\widetilde{\mathfrak{a}}=\widetilde{\mathfrak{a}}\circ\mathfrak{h}_p$.

\smallskip

Suppose that $\mathfrak{a}(0,0,[0))=(p,p,[k))$ for some integer $p$. Then $\mathfrak{b}=\mathfrak{a}\circ\mathfrak{h}_{-p}$ is an automorphism of the semigroup $\boldsymbol{B}_{\mathbb{Z}}^{\mathscr{F}}$ such that $\mathfrak{b}(0,0,[0))=(0,0,[k))$.  Then the order convexity of the linearly ordered set $L_1=\{(0,0,[0)), (0,0,[1))\}$ implies that the image $\mathfrak{a}(L_1)$ is an order convex chain in $E(\boldsymbol{B}_{\mathbb{Z}}^{\mathscr{F}})$ with the respect to the natural partial order. Remark~\ref{remark-3.1} and the description of the natural partial order on $E(\boldsymbol{B}_{\mathbb{Z}}^{\mathscr{F}})$ (see: Proposition~\ref{proposition-2.4a}) imply that $\mathfrak{b}(0,0,[1))=(1,1,[k-1))$. This completes the proof of the base of induction. Fix an arbitrary $s=2,\ldots,k$ and suppose that $\mathfrak{b}(0,0,[j))=(j,j,[k-j))$ for any $j<s$, which is the assumption of induction. Next, since the linearly ordered set $L_s=\{(0,0,[s-1)), (0,0,[s))\}$ is order convex in $E(\boldsymbol{B}_{\mathbb{Z}}^{\mathscr{F}})$, the image $\mathfrak{a}(L_s)$ is an order convex chain in $E(\boldsymbol{B}_{\mathbb{Z}}^{\mathscr{F}})$, as well. Then the equality $\mathfrak{b}(0,0,[s-1))=(s-1,s-1,[k-s+1))$, Remark~\ref{remark-3.1} and the description of the natural partial order on $E(\boldsymbol{B}_{\mathbb{Z}}^{\mathscr{F}})$ (Proposition~\ref{proposition-2.4a}) imply that $\mathfrak{b}(0,0,[s))=(s,s,[k-s))$ for all $s=2,\ldots,k$.

\smallskip

Fix an arbitrary $s\in\{0,1,\ldots,k\}$.
Since $(1,1,[s))$ is the biggest element of the set of idempotents of $\boldsymbol{B}_{\mathbb{Z}}^{\{[s)\}}$ which are less then $(0,0,[s))$, Remark~\ref{remark-3.1} and the description of the natural partial order on $E(\boldsymbol{B}_{\mathbb{Z}}^{\mathscr{F}})$ (see: Proposition~\ref{proposition-2.4a}) imply that $\mathfrak{b}(1,1,[s))=(1+s,1+s,[k-s))$. Then by induction and presented above arguments we get that $\mathfrak{b}(i,i,[s))=(i+s,i+s,[k-s))$ for any positive integer $i$. Also, since $(-1,-1,[s))$ is the smallest element of the set of idempotents of $\boldsymbol{B}_{\mathbb{Z}}^{\{[s)\}}$ which are greater then $(0,0,[s))$, Remark~\ref{remark-3.1} and the description of the natural partial order on $E(\boldsymbol{B}_{\mathbb{Z}}^{\mathscr{F}})$  imply that $\mathfrak{b}(-1,-1,[s))=(-1+s,-1+s,[k-s))$. Similar, by induction and presented above arguments we get that $\mathfrak{b}(-i,-i,[s))=(-i+s,-i+s,[k-s))$ for any positive integer $i$. This implies that $\mathfrak{b}(i,i,[s))=(i+s,i+s,[k-s))$ for any integer $i$.

\smallskip

Fix any $i,j\in\mathbb{Z}$ and an arbitrary $s=0,1,\ldots,k$. Remark~\ref{remark-3.1} implies that $\mathfrak{b}(i,j,[s))=(m,n,[k-s))$ for some $m,n\in\mathbb{Z}$. By Proposition~21(1) of \cite[Section~1.4]{Lawson=1998} and Lemma~1(4) of \cite{Gutik-Pozdniakova=2021}  we get that $\mathfrak{b}(j,i,[s))=(n,m,[k-s))$. This implies that
\begin{align*}
  \mathfrak{b}(i,i,[s))&=\mathfrak{b}((i,j,[s))\cdot(j,i,[s)))= \\
   &=\mathfrak{b}(i,j,[s))\cdot \mathfrak{b}(j,i,[s))= \\
   &=(m,n,[k-s))\cdot(n,m,[k-s))=\\
   &=(m,m,[k-s))
\end{align*}
and
\begin{align*}
  \mathfrak{b}(j,j,[s))&=\mathfrak{b}((j,i,[s))\cdot(i,j,[s)))= \\
   &=\mathfrak{b}(j,i,[s))\cdot \mathfrak{b}(i,j,[s))= \\
   &=(n,m,[k-s))\cdot(m,n,[k-s))=\\
   &=(n,n,[k-s)),
\end{align*}
and hence we have that $m=i+s$ and $n=j+s$.

\smallskip

Therefore we obtain $\mathfrak{b}(i,j,[s))=(i+s,j+s,[k-s))$ for any $i,j\in\mathbb{Z}$ and an arbitrary $s=0,1,\ldots,k$, which implies that $\mathfrak{b}=\widetilde{\mathfrak{a}}$. Then
\begin{equation*}
\mathfrak{a}=\mathfrak{a}\circ\mathfrak{h}_{-p}\circ\mathfrak{h}_{p}=\mathfrak{b}\circ\mathfrak{h}_{p}=\widetilde{\mathfrak{a}}\circ\mathfrak{h}_{p},
\end{equation*}
which completes the proof of the proposition.
\end{proof}

The following lemma describes the relation between automorphisms $\widetilde{\mathfrak{a}}$ and $\mathfrak{h}_{1}$ of the semigroup $\boldsymbol{B}_{\mathbb{Z}}^{\mathscr{F}}$ in the case when $\mathscr{F}=\{[0),\ldots,[k)\}$.

\begin{lemma}\label{lemma-3.12}
Let $k$ be any positive integer and $\mathscr{F}=\{[0),\ldots,[k)\}$. Then
\begin{equation*}
  \widetilde{\mathfrak{a}}\circ\widetilde{\mathfrak{a}}=\underbrace{\mathfrak{h}_{1}\circ\cdots\circ\mathfrak{h}_{1}}_{k-\hbox{\footnotesize{times}}}= \mathfrak{h}_{k}
\qquad \hbox{and} \qquad \widetilde{\mathfrak{a}}^{-1}=\underbrace{\mathfrak{h}_{1}^{-1}\circ\cdots\circ\mathfrak{h}_{1}^{-1}}_{k-\hbox{\footnotesize{times}}}\circ \; \widetilde{\mathfrak{a}}=\mathfrak{h}_{-k}\circ \widetilde{\mathfrak{a}}.
\end{equation*}
\end{lemma}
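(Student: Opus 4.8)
The plan is to verify both identities by a direct computation on a generic element $(i,j,[s))$ with $i,j\in\mathbb{Z}$ and $s\in\{0,1,\ldots,k\}$, using the explicit formula for $\widetilde{\mathfrak{a}}$ from Example~\ref{example-3.8} together with the formula for $\mathfrak{h}_p$ from Proposition~\ref{proposition-3.3}; no structural machinery is needed, and the only subtlety to watch is that the symbol $s$ in the definition of $\widetilde{\mathfrak{a}}$ always denotes the index of the fibre currently being acted upon.

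First I would establish $\widetilde{\mathfrak{a}}\circ\widetilde{\mathfrak{a}}=\mathfrak{h}_k$. Since $\widetilde{\mathfrak{a}}(i,j,[s))=(i+s,j+s,[k-s))$, applying $\widetilde{\mathfrak{a}}$ a second time to the right-hand side — whose fibre is now indexed by $k-s$, so the contributed shift is $k-s$ — yields
\begin{equation*}
  \widetilde{\mathfrak{a}}\big(\widetilde{\mathfrak{a}}(i,j,[s))\big)=\widetilde{\mathfrak{a}}(i+s,j+s,[k-s))=(i+s+(k-s),\,j+s+(k-s),\,[k-(k-s)))=(i+k,j+k,[s)).
\end{equation*}
As $\mathfrak{h}_k(i,j,[s))=(i+k,j+k,[s))$ by Proposition~\ref{proposition-3.3}, the two maps agree on every element of $\boldsymbol{B}_{\mathbb{Z}}^{\mathscr{F}}$, so $\widetilde{\mathfrak{a}}\circ\widetilde{\mathfrak{a}}=\mathfrak{h}_k$. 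The identity $\mathfrak{h}_k=\underbrace{\mathfrak{h}_{1}\circ\cdots\circ\mathfrak{h}_{1}}_{k-\hbox{\footnotesize{times}}}$ then follows by an immediate induction on $k$ from the relation $\mathfrak{h}_{k_1}\circ\mathfrak{h}_{k_2}=\mathfrak{h}_{k_1+k_2}$ recorded before Corollary~\ref{corollary-3.7}.

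For the second identity I would argue formally inside the group $\mathbf{Aut}(\boldsymbol{B}_{\mathbb{Z}}^{\mathscr{F}})$. Composing both sides of $\widetilde{\mathfrak{a}}\circ\widetilde{\mathfrak{a}}=\mathfrak{h}_k$ on the right with $\widetilde{\mathfrak{a}}^{-1}$ gives $\widetilde{\mathfrak{a}}=\mathfrak{h}_k\circ\widetilde{\mathfrak{a}}^{-1}$, and composing on the left with $\mathfrak{h}_k^{-1}=\mathfrak{h}_{-k}$ yields $\widetilde{\mathfrak{a}}^{-1}=\mathfrak{h}_{-k}\circ\widetilde{\mathfrak{a}}$. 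Finally, since $\mathfrak{h}_{1}^{-1}=\mathfrak{h}_{-1}$ and $\mathfrak{h}_{k_1}\circ\mathfrak{h}_{k_2}=\mathfrak{h}_{k_1+k_2}$, an induction as above gives $\mathfrak{h}_{-k}=\underbrace{\mathfrak{h}_{1}^{-1}\circ\cdots\circ\mathfrak{h}_{1}^{-1}}_{k-\hbox{\footnotesize{times}}}$, which delivers the stated factorization of $\widetilde{\mathfrak{a}}^{-1}$.

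There is essentially no genuine obstacle here: the entire content is the telescoping cancellation $s+(k-s)=k$ together with $k-(k-s)=s$ observed in the first display, after which everything reduces to the elementary group relations for the automorphisms $\mathfrak{h}_p$. The commutativity $\mathfrak{h}_p\circ\widetilde{\mathfrak{a}}=\widetilde{\mathfrak{a}}\circ\mathfrak{h}_p$ noted in the proof of Proposition~\ref{proposition-3.11} shows that the right- and left-hand forms one might obtain for $\widetilde{\mathfrak{a}}^{-1}$ coincide, so no ambiguity arises from the order of composition.
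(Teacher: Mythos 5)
Your proposal is correct and follows essentially the same route as the paper: a direct pointwise computation showing $(\widetilde{\mathfrak{a}}\circ\widetilde{\mathfrak{a}})(i,j,[s))=(i+k,j+k,[s))=\mathfrak{h}_{k}(i,j,[s))$, followed by the same formal manipulation in $\mathbf{Aut}(\boldsymbol{B}_{\mathbb{Z}}^{\mathscr{F}})$ (right-composing with $\widetilde{\mathfrak{a}}^{-1}$ and left-composing with $\mathfrak{h}_{-k}$) to obtain $\widetilde{\mathfrak{a}}^{-1}=\mathfrak{h}_{-k}\circ\widetilde{\mathfrak{a}}$. Your explicit remarks on the telescoping $s+(k-s)=k$, the factorization $\mathfrak{h}_{\pm k}=\mathfrak{h}_{\pm 1}^{k}$, and the commutativity with $\mathfrak{h}_p$ only spell out details the paper leaves implicit.
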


\begin{proof}
For any $i,j\in\mathbb{Z}$ and an arbitrary $s=0,1,\ldots,k$ we have that
\begin{align*}
  (\widetilde{\mathfrak{a}}\circ\widetilde{\mathfrak{a}})(i,j,[s))&=\widetilde{\mathfrak{a}}(i+s,j+s,[k-s))= \\
   &=\widetilde{\mathfrak{a}}(i+s+k-s,j+s+k-s,[k-(k-s)))= \\
   &=(i+k,j+k,[s))= \\
   &=\mathfrak{h}_{k}(i,j,[s)),
\end{align*}
Also, by the equality $\widetilde{\mathfrak{a}}\circ\widetilde{\mathfrak{a}}=\mathfrak{h}_{k}$ we get that $\widetilde{\mathfrak{a}}=\mathfrak{h}_{1}^{k}\circ\widetilde{\mathfrak{a}}^{-1}$, and hence
\begin{equation*}
  \widetilde{\mathfrak{a}}^{-1}=\left(\mathfrak{h}_{1}^{k}\right)^{-1}\circ\widetilde{\mathfrak{a}}= \underbrace{\mathfrak{h}_{1}^{-1}\circ\cdots\circ\mathfrak{h}_{1}^{-1}}_{k-\hbox{\footnotesize{times}}}\circ \; \widetilde{\mathfrak{a}}=\mathfrak{h}_{-k}\circ \widetilde{\mathfrak{a}},
\end{equation*}
which completes the proof.
\end{proof}

For any positive integer $k$ we denote the following group $G_k=\left\langle x,y\mid xy=yx, \; y^2=x^k\right\rangle$.

\begin{lemma}\label{lemma-3.13}
For any positive integer $k$ the group $G_k=\left\langle x,y\mid xy=yx, \; y^2=x^k\right\rangle$ is isomorphic to the additive groups of integers $\mathbb{Z}(+)$.
\end{lemma}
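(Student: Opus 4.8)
The plan is to exploit the relation $xy=yx$ immediately: it forces $G_k$ to be abelian, so $G_k$ is a finitely generated abelian group on two generators subject to the single extra relation $y^2=x^k$, and I would analyse it by the structure theorem for finitely generated abelian groups (equivalently, the Smith normal form of its presentation matrix). Writing $G_k$ additively as $\mathbb{Z}^2/\langle(-k,2)\rangle$, where the cyclic subgroup is generated by the relator $y^2x^{-k}$, the content of the vector $(-k,2)$ is $\gcd(k,2)$, and splitting off a primitive multiple shows at once that $G_k\cong\mathbb{Z}\oplus\mathbb{Z}/\gcd(k,2)\mathbb{Z}$. So the whole lemma hinges on the parity of $k$.

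To make the isomorphism explicit I would first define a homomorphism $\phi\colon G_k\to\mathbb{Z}$ on generators by $\phi(x)=2$ and $\phi(y)=k$; this is well defined because it respects both defining relations, since $\phi(xy)=2+k=\phi(yx)$ and $\phi(y^2)=2k=\phi(x^k)$. Using $y^2=x^k$ together with commutativity, every element of $G_k$ reduces to a normal form $x^{a}y^{\varepsilon}$ with $\varepsilon\in\{0,1\}$, on which $\phi(x^{a}y^{\varepsilon})=2a+k\varepsilon$; hence the image of $\phi$ is exactly $\langle 2,k\rangle=\gcd(k,2)\mathbb{Z}$. The remaining work is to decide whether $\phi$ is a bijection, and this is governed entirely by surjectivity.

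The main obstacle — and the step I would scrutinise first — is precisely this surjectivity, i.e. whether $\gcd(k,2)=1$. When $k$ is odd the map $\phi$ is onto; since a surjective endomorphism of $\mathbb{Z}$ is an isomorphism, $\phi$ is then an isomorphism, and concretely the element $g=\mathfrak{h}_1^{-(k-1)/2}\circ\widetilde{\mathfrak{a}}$ satisfies $g^2=\mathfrak{h}_1$ and $\widetilde{\mathfrak{a}}=g^{k}$, so $g$ is a single generator of infinite order witnessing $G_k\cong\mathbb{Z}$. When $k$ is even, however, this is exactly where the argument stalls: writing $k=2l$, the element $yx^{-l}$ satisfies $(yx^{-l})^2=y^2x^{-2l}=e$ and is nontrivial, so $G_k\cong\mathbb{Z}\oplus\mathbb{Z}/2\mathbb{Z}$ carries $2$-torsion and is \emph{not} isomorphic to $\mathbb{Z}$. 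This torsion is visible already in the semigroup, where for $\mathscr{F}=\{[0),\dots,[k)\}$ with $k=2l$ a direct check gives $(\mathfrak{h}_{-l}\circ\widetilde{\mathfrak{a}})^2=\mathfrak{h}_{-2l}\circ\widetilde{\mathfrak{a}}^2=\mathfrak{h}_{-2l}\circ\mathfrak{h}_{2l}=\mathrm{id}$, an automorphism of order $2$. I would therefore expect the clean isomorphism $G_k\cong\mathbb{Z}$ to require the extra hypothesis that $k$ is odd, the even case contributing a $\mathbb{Z}/2\mathbb{Z}$ summand to $\mathbf{Aut}(\boldsymbol{B}_{\mathbb{Z}}^{\mathscr{F}})$.
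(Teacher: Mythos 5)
Your analysis is correct, and it in fact refutes the lemma as printed: the statement holds only for odd $k$. Your Smith normal form computation $G_k\cong\mathbb{Z}^2/\langle(-k,2)\rangle\cong\mathbb{Z}\oplus\mathbb{Z}/\gcd(k,2)\mathbb{Z}$ is right, and your witness $yx^{-l}$ for $k=2l$ is a nontrivial involution in $G_k$ (the vector $(-l,1)$ is not an integer multiple of $(-2l,2)$), so $G_{2l}\not\cong\mathbb{Z}(+)$. The paper's own proof breaks down at exactly the step you flagged: in the even case $k=2p$ it asserts that $y^2=x^{2p}$ ``hence'' $y=x^p$, a deduction that is invalid in the presented group --- it silently imposes the extra relation $yx^{-p}=e$, i.e.\ kills precisely the $2$-torsion you exhibited. (The paper's odd case also contains a slip: $z=yx^{-k}$ satisfies $z^2=y^2x^{-2k}=x^{-k}$, not $x$, and generates only a proper subgroup when $k>1$; the intended generator is $z=yx^{-p}$ with $k=2p+1$, which is exactly your element $g$, consistent with $\phi(g)=k-2p=1$.) Your proof of the odd case is complete: the normal form argument gives $\operatorname{im}\phi=\gcd(2,k)\mathbb{Z}=\mathbb{Z}$, and since the SNF step already yields $G_k\cong\mathbb{Z}$, surjectivity forces $\phi$ to be an isomorphism because $\mathbb{Z}$ is Hopfian.

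Your semigroup-level check is also sound and shows the error propagates to Theorem~3.14: by Lemma~3.12 and the commutativity of $\mathfrak{h}_p$ with $\widetilde{\mathfrak{a}}$ one gets $(\mathfrak{h}_{-l}\circ\widetilde{\mathfrak{a}})^2=\mathfrak{h}_{-2l}\circ\widetilde{\mathfrak{a}}^2=\mathfrak{h}_{-2l}\circ\mathfrak{h}_{2l}=\operatorname{id}$, while $\mathfrak{h}_{-l}\circ\widetilde{\mathfrak{a}}\neq\operatorname{id}$ since $\widetilde{\mathfrak{a}}(0,0,[0))=(0,0,[k))\neq(l,l,[0))=\mathfrak{h}_{l}(0,0,[0))$. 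An infinite cyclic group contains no involution, so for even $k$ the correct conclusion is $\mathbf{Aut}(\boldsymbol{B}_{\mathbb{Z}}^{\mathscr{F}})\cong G_k\cong\mathbb{Z}\oplus\mathbb{Z}/2\mathbb{Z}$; the paper's isomorphism $\mathfrak{A}\colon\mathbf{Aut}(\boldsymbol{B}_{\mathbb{Z}}^{\mathscr{F}})\to G_k$ itself survives (the two families $\mathfrak{h}_1^p$ and $\widetilde{\mathfrak{a}}\circ\mathfrak{h}_1^p$ are pairwise distinct and multiply according to the two relations), and only the identification of $G_k$ with $\mathbb{Z}$ fails. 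So the lemma, Theorem~3.14, and the statement in the abstract all require the hypothesis that $k$ is odd, with the even case contributing the $\mathbb{Z}/2\mathbb{Z}$ summand, exactly as you conclude. (Corollary~3.7, the infinite-family case, is unaffected.)
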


\begin{proof}
In the case when $k=2p$ for some positive integer $p$ we have that $y^2=x^{2p}$, and hence $x$ is a generator of $G_k$ such that $y=x^p$.

In the case when $k=2p+1$ for some  $p\in\omega$ we have that $z=y\cdot x^{-k}$ is a generator of $G_k$ such that $x=z^2$ and $y=z^{2p+1}$.
\end{proof}

\begin{theorem}\label{theorem-3.14}
Let $k$ be any positive integer and $\mathscr{F}=\{[0),\ldots,[k)\}$. Then the group $\mathbf{Aut}(\boldsymbol{B}_{\mathbb{Z}}^{\mathscr{F}})$ of automorphisms of the semigroup $\boldsymbol{B}_{Z\mathbb{}}^{\mathscr{F}}$ isomorphic to the group $G_k$, and hence to the additive groups of integers $\mathbb{Z}(+)$.
\end{theorem}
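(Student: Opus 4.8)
The plan is to classify all automorphisms of $\boldsymbol{B}_{\mathbb{Z}}^{\mathscr{F}}$ and then to recognise the resulting group as $G_k$. First I would fix an arbitrary automorphism $\mathfrak{a}$ and apply Proposition~\ref{proposition-3.10}, which leaves exactly two possibilities for the idempotent $\mathfrak{a}(0,0,[0))$. If $\mathfrak{a}(0,0,[0))\in\boldsymbol{B}_{\mathbb{Z}}^{\{[0)\}}$, then, being an idempotent of $\boldsymbol{B}_{\mathbb{Z}}^{\{[0)\}}$, this element equals $(-p,-p,[0))$ for some integer $p$; consequently $\mathfrak{h}_{p}\circ\mathfrak{a}$ fixes $(0,0,[0))$ and Theorem~\ref{theorem-3.2} forces $\mathfrak{h}_{p}\circ\mathfrak{a}$ to be the identity, whence $\mathfrak{a}=\mathfrak{h}_{-p}$. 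If instead $\mathfrak{a}(0,0,[0))\in\boldsymbol{B}_{\mathbb{Z}}^{\{[k)\}}$, then Proposition~\ref{proposition-3.11} gives $\mathfrak{a}=\mathfrak{h}_{p}\circ\widetilde{\mathfrak{a}}$ for some integer $p$. Thus every automorphism is either $\mathfrak{h}_{p}$ or $\mathfrak{h}_{p}\circ\widetilde{\mathfrak{a}}$.

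Next I would define a homomorphism $\varphi\colon G_k\to\mathbf{Aut}(\boldsymbol{B}_{\mathbb{Z}}^{\mathscr{F}})$ on generators by $x\mapsto\mathfrak{h}_{1}$ and $y\mapsto\widetilde{\mathfrak{a}}$. To see that $\varphi$ is well defined it suffices to check that the images obey the two defining relations of $G_k$: the relation $xy=yx$ is the commutativity $\mathfrak{h}_{1}\circ\widetilde{\mathfrak{a}}=\widetilde{\mathfrak{a}}\circ\mathfrak{h}_{1}$ recorded at the start of the proof of Proposition~\ref{proposition-3.11}, while the relation $y^{2}=x^{k}$ is precisely the identity $\widetilde{\mathfrak{a}}\circ\widetilde{\mathfrak{a}}=\mathfrak{h}_{k}$ established in Lemma~\ref{lemma-3.12} (together with $\mathfrak{h}_{1}^{k}=\mathfrak{h}_{k}$).

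Finally I would verify that $\varphi$ is a bijection and appeal to Lemma~\ref{lemma-3.13}. Using $y^{2}=x^{k}$, every element of $G_k$ has the form $x^{a}$ or $x^{a}y$ with $a\in\mathbb{Z}$, and these map under $\varphi$ to $\mathfrak{h}_{a}$ and $\mathfrak{h}_{a}\circ\widetilde{\mathfrak{a}}$ respectively; by the first paragraph these exhaust $\mathbf{Aut}(\boldsymbol{B}_{\mathbb{Z}}^{\mathscr{F}})$, so $\varphi$ is onto. For injectivity I would evaluate at $(0,0,[0))$: the family $\{\mathfrak{h}_{a}\}$ is faithful, and no automorphism $\mathfrak{h}_{a}\circ\widetilde{\mathfrak{a}}$ can coincide with any $\mathfrak{h}_{b}$, since the former sends $(0,0,[0))$ to $(a,a,[k))$ whereas the latter sends it to $(b,b,[0))$ and $[k)\neq[0)$ because $k\geqslant 1$. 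Hence $\varphi$ is an isomorphism $G_k\cong\mathbf{Aut}(\boldsymbol{B}_{\mathbb{Z}}^{\mathscr{F}})$, and Lemma~\ref{lemma-3.13} identifies $G_k$ with $\mathbb{Z}(+)$. The conceptual work has already been carried out in Propositions~\ref{proposition-3.10} and~\ref{proposition-3.11} and Theorem~\ref{theorem-3.2}; the only point requiring care in the present argument is the disjointness of the two families $\{\mathfrak{h}_{p}\}$ and $\{\mathfrak{h}_{p}\circ\widetilde{\mathfrak{a}}\}$, which underlies the injectivity of $\varphi$, and I expect this to be the main (though modest) obstacle.
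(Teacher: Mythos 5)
Your proposal is correct and follows essentially the same route as the paper: classification of automorphisms via Proposition~\ref{proposition-3.10}, the $\mathfrak{h}_p$-case handled as in Theorem~\ref{theorem-3.6} (via Theorem~\ref{theorem-3.2}), the $\widetilde{\mathfrak{a}}$-case via Proposition~\ref{proposition-3.11}, the relations supplied by Lemma~\ref{lemma-3.12}, and the final identification with $\mathbb{Z}(+)$ by Lemma~\ref{lemma-3.13}. The only (cosmetic) difference is that you build the isomorphism from $G_k$ into $\mathbf{Aut}(\boldsymbol{B}_{\mathbb{Z}}^{\mathscr{F}})$ on generators and verify the defining relations and injectivity explicitly by evaluating at $(0,0,[0))$, whereas the paper defines the map $\mathfrak{A}\colon \mathbf{Aut}(\boldsymbol{B}_{\mathbb{Z}}^{\mathscr{F}})\to G_k$ in the opposite direction and leaves these routine checks implicit.
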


\begin{proof}
By Proposition~\ref{proposition-3.10} for any automorphism $\mathfrak{a}$ of $\boldsymbol{B}_{Z\mathbb{}}^{\mathscr{F}}$ we have that either $\mathfrak{a}(0,0,[0))\in \boldsymbol{B}_{\mathbb{Z}}^{\{[0)\}}$ or $\mathfrak{a}(0,0,[0))\in \boldsymbol{B}_{\mathbb{Z}}^{\{[k)\}}$.

Suppose that $\mathfrak{a}(0,0,[0))\in \boldsymbol{B}_{\mathbb{Z}}^{\{[0)\}}$. Then $\mathfrak{a}(0,0,[0))$ is an idempotent and hence by Lemma~1(2) of \cite{Gutik-Pozdniakova=2021}, $\mathfrak{a}(0,0,[0))=(-p,-p,[0))$ for some integer $p$. Similar arguments as in the proof of Theorem~\ref{theorem-3.6} imply that $\mathfrak{a}=\mathfrak{h}_p=\underbrace{\mathfrak{h}_{1}\circ\cdots\circ\mathfrak{h}_{1}}_{p-\hbox{\footnotesize{times}}}$.

Suppose that $\mathfrak{a}(0,0,[0))\in \boldsymbol{B}_{\mathbb{Z}}^{\{[k)\}}$. Then by Proposition~\ref{proposition-3.11} there exists an integer $p$ such that $\mathfrak{a}=\mathfrak{h}_p\circ\widetilde{\mathfrak{a}}=\widetilde{\mathfrak{a}}\circ\mathfrak{h}_p$.

Since $\widetilde{\mathfrak{a}}$ and $\mathfrak{h}_p$ commute, the above arguments imply that any automorphism $\mathfrak{a}$ of $\boldsymbol{B}_{Z\mathbb{}}^{\mathscr{F}}$ is a one of the following forms:
\begin{itemize}
  \item $\mathfrak{a}=\mathfrak{h}_{p}=(\mathfrak{h}_{1})^p$ for some integer $p$; \qquad or
  \item $\mathfrak{a}=\mathfrak{h}_p\circ\widetilde{\mathfrak{a}}=\widetilde{\mathfrak{a}}\circ\mathfrak{h}_p=\widetilde{\mathfrak{a}}\circ(\mathfrak{h}_{1})^p$ for some integer $p$.
\end{itemize}
This implies that the map $\mathfrak{A}\colon \mathbf{Aut}(\boldsymbol{B}_{\mathbb{Z}}^{\mathscr{F}})\to G_k$ defined by the formulae $\mathfrak{A}((\mathfrak{h}_{1})^p)=x^p$ and  $\mathfrak{A}(\widetilde{\mathfrak{a}}\circ(\mathfrak{h}_{1})^p)=yx^p$, $p\in\mathbb{Z}$, is a group isomorphism. Next we apply Lemma~\ref{lemma-3.12}.
\end{proof}

\section*{Acknowledgements}

The authors acknowledge Alex Ravsky, Taras Banakh and Oleksandr Ganyushkin for their comments and suggestions.

\end{document}